\newtheorem{thm}{Theorem}
\newtheorem{cor}[thm]{Corollary}
\newtheorem{defi}[thm]{Definition}
\newtheorem{rem}[thm]{Remark}
\newtheorem{nota}[thm]{Notation}
\newtheorem{exa}[thm]{Example}
\newtheorem{des}[thm]{Description}
\newtheorem{ack}[thm]{Acknowledgement}
\newcommand\be{\begin{equation}}
\newcommand\ee{\end{equation}}
\newcommand{\tup}{\underline} 
\def\bdefi{\begin{defi}\rm}
\def\edefi{\end{defi}}
\def\bnota{\begin{nota}\rm}
\def\enota{\end{nota}}
\def\brem{\begin{rem}\rm}
\def\erem{\end{rem}}
\def\FIVE{\Pi_{1}^{1}\text{-\textsf{CA}}_{0}}
\newcommand{\Sh}{\ensuremath{\protect{S_{\st{}}}}}
\newcommand{\intern}{\textup{{\textsf{int}}}}
\newcommand{\existsst}{\exists^{\st{}}}
\newcommand{\forallst}{\forall^{\st{}}}
\def\ATR{\textup{\textsf{ATR}}}
\def\NCR{\textup{\textsf{NCR}}}
\def\IST{\textup{\textsf{IST}}}
\def\ZFC{\textup{\textsf{ZFC}}}
\def\H{\textup{\textsf{H}}}
\def\RCA{\textup{\textsf{RCA}}}
\def\RCAo{\textup{\textsf{RCA}}_{0}^{\omega}}
\def\ef{\textup{\textsf{ef}}}
\def\ns{\textup{\textsf{ns}}}
\def\WKL{\textup{\textsf{WKL}}}
\def\T{\mathcal{T}}
\def\bye{\end{document}}
\def\P{\textup{\textsf{P}}}
\def\N{{\mathbb  N}}
\def\R{{\mathbb  R}}
\def\I{{\textsf{\textup{I}}}}
\def\MCT{\textup{\textsf{MCT}}}
\def\R{{\mathbb{R}}}
\def\({\textup{(}}
\def\){\textup{)}}
\def\st{\textup{st}}
\def\asa{\leftrightarrow}
\def\di{\rightarrow}
\def\eps{\varepsilon}
\def\M{\mathcal{M}}
\def\ACA{\textup{\textsf{ACA}}}
\def\paai{\Pi_{1}^{0}\textup{-\textsf{TRANS}}}
\def\Paai{\Pi_{1}^{1}\textup{-\textsf{TRANS}}}
\def\QFAC{\textup{\textsf{QF-AC}}}
\def\pw{\textup{\textsf{pw}}}
\def\DIV{\textup{\textsf{DIV}}}
\def\PST{\textup{\textsf{PST}}}
\def\RIE{\textup{\textsf{RIE}}}
\def\MU{\textup{\textsf{MU}}}
\def\MUO{\textup{\textsf{MUO}}}
\def\HAC{\textup{\textsf{HAC}}}
\def\INT{\textup{\textsf{int}}}
\numberwithin{equation}{section}
\numberwithin{thm}{section}
\begin{document}
\title{Reverse Formalism 16}

\author{Sam Sanders}
\address{Munich Center for Mathematical Philosophy, LMU Munich, Germany \& Department of Mathematics, Ghent University} 
\email{sasander@me.com}
\maketitle
\begin{abstract}
In his remarkable paper \emph{Formalism 64}, Robinson defends his eponymous position concerning the foundations of mathematics, as follows:
\begin{enumerate}
\item[(i)]  Any mention of infinite totalities is literally meaningless.  
\item[(ii)] We should act as if infinite totalities really existed.  
\end{enumerate}
Being the originator of \emph{Nonstandard Analysis}, it stands to reason that Robinson would have often been faced with the opposing position that `some infinite totalities are more meaningful than others', the textbook example being that of infinitesimals (versus less controversial infinite totalities).  
For instance, Bishop and Connes have made such claims regarding infinitesimals, and Nonstandard Analysis in general, going as far as calling the latter respectively a \emph{debasement of meaning} and \emph{virtual}, while accepting as meaningful other infinite totalities and the associated mathematical framework.   

\medskip

We shall study the critique of Nonstandard Analysis by Bishop and Connes, and observe that these authors equate `meaning' and `computational content', though their interpretations of said content vary.  As we will see, Bishop and Connes claim that the presence of ideal objects (in particular infinitesimals) in Nonstandard Analysis yields the absence of meaning (i.e.\ computational content).  
We will debunk the Bishop-Connes critique by establishing the contrary, namely that the presence of ideal objects (in particular infinitesimals) in Nonstandard Analysis yields the \emph{ubiquitous presence} of computational content.  In particular, infinitesimals provide an \emph{elegant shorthand} for expressing computational content.       
To this end, we introduce a direct translation between a large class of theorems of Nonstandard Analysis and theorems rich in computational content (not involving Nonstandard Analysis), similar to the `reversals' from the foundational program \emph{Reverse Mathematics}.  
The latter also plays an important role in gauging the scope of this translation.  
\keywords{Abraham Robinson, formalism, Nonstandard Analysis, computational content}
\end{abstract}

\medskip

Accepted for publication in \emph{Synthese}, special issue on the foundations of mathematics (eds.\ John Wigglesworth, Carolin Antos-Kuby, Neil Barton, Sy David Friedman, and Claudio Ternullo), 2017.
\section{Reverse formalism: What's in a name?}
Implicit in the title of this paper is Robinson's remarkable paper \emph{Formalism 64}  (\cite{robinson64}) in which he renounces his previous platonist philosophy of mathematics and outlines his newfound\footnote{
Note that Robinson still held the formalist view nearly ten years after Formalism 64.  
\begin{quote}
Subsequent exchanges, both oral and in published writings, have not induced me to change my views [from Formalism 64]. Moreover, I
believed then and I still believe that the well-known recent developments in set theory represent evidence favoring these views. 
(\cite{roku}*{p.\ 42})
\end{quote}
Moreover, Dauben discusses the origin of Robinson's `change of heart' in \cite{tauben}*{\S5}.} formalist beliefs:  
\begin{enumerate}
\item[(i)]  Any mention of infinite totalities is literally meaningless.  
\item[(ii)] We should act as if infinite totalities really existed.  
\end{enumerate}
Most anti-realist positions in the philosophy of mathematics do not go quite as far:  While parts of mathematics 
are rejected as meaningless -constructivism and the law of excluded middle are discussed in Section \ref{hitch} below- other parts are accepted as meaningful (usually only after careful justification).  

\medskip

By way of an \emph{infamous} example of such rejection, Bishop went as far as debasing Nonstandard Analysis as a \emph{debasement of meaning} (\cite{kluut}*{p.\ 513}), while Connes has referred to Nonstandard Analysis as \emph{virtual} and \emph{a chimera} (\cite{kano2}*{\S3.1}).   
In particular, for rather different reasons and in different contexts, Bishop and Connes equate `meaningful mathematics' and `mathematics with computational content', and therefore claim {Nonstandard Analysis} is devoid of meaning as it lacks -in their view- any and all computational content.  

\medskip

As may be expected in light of their different philosophies of mathematics\footnote{See Section \ref{hitch} (resp.\ Section \ref{koko}) for a discussion of Bishop's (resp.\ Connes') philosophy of mathematics.}, Connes and Bishop have diverging opinions on what constitutes `computational content', but their motivation for claiming Nonstandard Analysis' lack thereof is the same, namely as follows:
\begin{center}
\emph{The presence of ideal objects \(in particular infinitesimals\) in Nonstandard Analysis yields the absence of computational content.}
\end{center}
We shall refer to this claim as the \emph{Bishop-Connes critique} (of Nonstandard Analysis).
As discussed in Section \ref{PO}, Connes has formulated this critique in print more or less literally, while it is implicit in Bishop's philosophy of mathematics.  

\medskip

Furthermore, the \emph{arguments by Connes and Bishop for} this critique have been dissected in surprising detail in various places \emph{and found wanting} (as also discussed in Section \ref{PO}).  
Nonetheless, there has been no \emph{full-scale debunking} of the Bishop-Connes critique.  
The aim of this paper precisely is to debunk this critique by establishing the following `opposite' claim:
\begin{center}
\emph{The presence of ideal objects \(in particular infinitesimals\) in Nonstandard Analysis yields the \emph{ubiquitous presence} of computational content.}
\end{center}
 In particular, we shall show that infinitesimals provide an \emph{elegant shorthand} for expressing computational content. 
To this end, we establish a direct translation between (proofs of) theorems of Nonstandard Analysis and (proofs of) theorems rich in computational content, similar to the `reversals' from the foundational program \emph{Reverse Mathematics}.  
The latter also plays an important role in gauging the scope of this translation.  
Finally, the translation at hand is merely a syntactic manipulation of finite objects (proofs) given by an algorithm (in the sense of Bishop), as discussed in Section \ref{promi}.  

\medskip

We label the aforementioned translation as `reverse formalism', as it bestows Bishop-Connes style meaning, i.e.\ computational content, onto Nonstandard Analysis, and is similar to the reversals in Reverse Mathematics (See Section \ref{RM}).  
As to the further structure of this paper, we shall briefly discuss Nonstandard Analysis in Section \ref{NIST}.
The foundational program Reverse Mathematics is introduced in Section \ref{RM}. 
The views of Bishop and Connes on Nonstandard Analysis are discussed in Section~\ref{PO}, while the main contribution of this paper, reverse formalism and the aforementioned translation, is discussed in Section~\ref{RF}.  

\section{Reverse Mathematics}\label{RM}
We shall introduce the program \emph{Reverse Mathematics}, sketch its main results, and discuss the associated vague notion of `mathematical theorem'.  
As we will see, the classification provided by Reverse Mathematics is quite elegant \emph{but inherently vague}.  In particularly, there is no meta-theorem or 
formula class capturing this classification.  Our results in Section \ref{RF} are based on Reverse Mathematics in the sense that we obtain a similar classification \emph{and that our classification is similarly inherently vague}.  In our opinion, a lot of philosophical insight into mathematics can be reaped from the study of these vague aspects of the discipline, but that is beyond the scope of the current paper.       
    
\subsection{Introducing Reverse Mathematics}\label{RM!}
Reverse Mathematics (RM) is a program in the foundations of mathematics initiated around 1975 by Friedman (\cites{fried,fried2}) and developed extensively by Simpson (\cite{simpson2, simpson1}) and others.  
The aim of RM is to find the axioms \emph{necessary} to prove a statement of \emph{ordinary} mathematics, i.e.\ dealing with countable or separable objects.   
Simpson neatly summarises the main results:
\begin{quote}
In many cases, if a mathematical theorem is proved from appropriately weak set existence axioms, then the axioms will be logically equivalent to the theorem. Furthermore, only a few specific set existence axioms arise repeatedly in this context, which in turn correspond to classical foundational programs. This is the theme of reverse mathematics, [\dots] (\cite{simpson2}*{Preface to the second edition}).
\end{quote}
We now discuss these results in more detail while referring to Simpson's monograph \cite{simpson2} for full details like the exact definitions of the formal systems used.  

\medskip

First of all, the classical\footnote{In \emph{Constructive Reverse Mathematics} (\cite{ishi1}), intuitionistic logic is used instead.} base theory $\RCA_{0}$ of `computable\footnote{$\RCA_{0}$ consists of induction $I\Sigma_{1}$, and the {\bf r}ecursive {\bf c}omprehension {\bf a}xiom $\Delta_{1}^{0}$-CA.} mathematics' is usually assumed to be given in RM.  
Thus, the aim of RM is as follows:  
\begin{quote}
\emph{The aim of \emph{RM} is to find the minimal axioms $A$ such that $\RCA_{0}$ proves $ [A\di T]$ for statements $T$ of ordinary mathematics.}
\end{quote}
Surprisingly, once the minimal axioms $A$ have been found, we almost always also have $\RCA_{0}\vdash [A\asa T]$, i.e.\ not only can we derive the theorem $T$ from the axioms $A$ (the `usual' way of doing mathematics), we can also derive the axiom $A$ from the theorem $T$ (the `reverse' way of doing mathematics).  In light of the latter, the field was baptised `Reverse Mathematics'.    

\medskip

Secondly, and perhaps even more surprisingly, in the majority\footnote{Exceptions are classified in the so-called Reverse Mathematics Zoo (\cite{damirzoo}).  Most of these are `combinatorial in nature', another vague notion.} 
of cases for a statement $T$ of ordinary mathematics, either $T$ is provable in $\RCA_{0}$, or the latter proves $T\asa A_{i}$, where $A_{i}$ is one of the logical systems $\WKL_{0}, \ACA_{0},$ $ \ATR_{0}$ or $\FIVE$.  The latter together with $\RCA_{0}$ form the `Big Five' and the aforementioned observation that most mathematical theorems fall into one of the Big Five categories, is called the \emph{Big Five phenomenon} (\cite{montahue}*{p.~432}).  
Furthermore, each of the Big Five has a natural formulation in terms of (Turing) computability (See e.g.\ \cite{simpson2}*{I.3.4, I.5.4, I.7.5}).
As noted by Simpson in \cite{simpson2}*{I.12}, each of the Big Five also corresponds (sometimes loosely) to a foundational program in mathematics.  

\subsection{Ordinary mathematics and other vague notions}\label{hotgie}
A crucial point regarding RM is that the two main results from Section \ref{RM!} (namely `reversals' and the `Big Five phenomenon') are \emph{heuristic and qualitative} observations following the empirical study of theorems of ordinary mathematics.  Obviously, the category `ordinary mathematics' is vague, but is described by Simpson as well as can be expected, namely as follows:
\begin{quote}
We identify as \emph{ordinary} or \emph{non-set-theoretic} that body of mathematics which is prior to or independent of the introduction of abstract set-theoretic concepts. We have in mind such branches as geometry, number theory, calculus, differential equations, real and complex analysis, countable algebra, the topology of complete separable metric spaces, mathematical logic, and computability theory. (Emphasis in original, \cite{simpson2}*{p.\ 1})
\end{quote}
However, the vagueness in the above main results of RM runs much deeper:  Since `ordinary mathematics' does not have a formal definition, how should we then understand the notion `theorem of ordinary mathematics'?  In particular, when and why does one bestow the title `theorem' onto an arbitrary sentence provable in second-order arithmetic?

\medskip

While we do not claim to answer this question, the notion `theorem of ordinary mathematics' can be elucidated as follows:  When formalising mathematics  in second-order arithmetic $\textsf{Z}_{2}$ as in RM, it becomes apparent that \emph{only a small fragment of the $\textsf{\textup{Z}}_{2}$} is needed.  In particular, the strongest Big Five system $\FIVE$ suffices for formalising almost all of ordinary mathematics, while this system is only the `first fragment' of $\textsf{Z}_{2}=\cup_{k}\Pi_{k}^{1}\textsf{-CA}_{0}$.  Furthermore, the Big Five systems of RM were initially formulated with the axiom schema of induction \emph{for any formula} (\cite{fried}*{p.\ 236}), but it was soon realised that induction is \emph{only needed for purely existential formulas}.  The subscript `$0$' in the Big Five systems refers to this use of restricted induction.  

\medskip

Hence, we observe that large parts of the induction and comprehension axioms in $\textsf{Z}_{2}$ are not needed for formalising the known results of ordinary mathematics (the latter as described in the above quote by Simpson).  
It is a natural first step to identify these `unneeded' axioms as `non-mathematical'.  While this falls short of providing a formal definition for `theorem of ordinary mathematics', we believe that it is essential to make a distinction between sentences of second-order arithmetic which are `mathematical' and which are `non-mathematical' in nature.  The exact distinction between these two concepts is inherently vague, but it is a `real' distinction, as evidenced by the aforementioned existence of `unneeded' axioms which make up the bulk of $\textsf{Z}_{2}$.  

\medskip

So far, we have left unanswered the question of when and why one bestows the title `theorem' onto an arbitrary sentence provable in second-order arithmetic.  We \emph{have} observed that some sentences of second-order arithmetic may be called `non-mathematical' in the sense that they are not needed in the formalisation of mathematics in $\textsf{Z}_{2}$. Thus, we have established some evidence for the reality of the distinction `mathematical versus non-mathematical sentence' in second-order arithmetic. The notion of `theorem of ordinary mathematics' has therewith become slightly more real.   

\medskip

Finally, the above observations are not limited to RM alone:  There are at least two other fields where a similar `mathematical versus non-mathematical' distinction is made.  Firstly, Bishop makes a clear distinction between `real mathematics' and formal systems in the following quotes from \cite{bish1}*{p.\ 6}.
\begin{quote}
A bugaboo of both Brouwer
and the logicians has been compulsive speculation about the nature
of the continuum. In the case of the logicians this leads to 
contortions in which various formal systems, all detached from reality,
are interpreted within one another in the hope that the nature of the
continuum will somehow emerge. 
\end{quote}
\begin{quote}
In fairness to Brouwer it should
be said that he did not associate himself with these efforts to formalize
reality; it is the fault of the logicians that many mathematicians who
think they know something of the constructive point of view have in
mind a dinky formal system or, just as bad, confuse constructivism
with recursive function theory.  \end{quote}
It should be noted that Bishop revised his views on the value of formal systems later in life (See e.g.\ \cite{nukino}*{p.\ 60}), as also discussed in Section \ref{promi}.

\medskip

Secondly, ever since G\"odel's famous incompleteness theorems (See e.g.\ \cite{buss}*{II}), it is known that a reasonably rich and consistent logical system cannot prove its own consistency.  
For many years, it was then an open problem to find \emph{mathematically natural} examples of statements not provable in e.g.\ Peano arithmetic.  Paris and Harrington succeeded in finding
such a statement, as suggested by the quote:
\begin{quote}
We investigate a reasonably natural theorem of finitary combinatorics, a simple
extension of the Finite Ramsey Theorem. This chapter is mainly devoted to
demonstrating that this theorem, while true, is not provable in Peano
arithmetic. (\cite{barwise}*{p.\ 1134})
\end{quote}
The above examples suggest that the `mathematical versus non-mathematical' distinction is real at least in the sense of 
being commonplace to people working in mathematical logic.  As noted above, a lot of philosophical insight into mathematics can be reaped -in our opinion- from the study of these vague aspects of mathematical logic, but that is beyond the scope of this paper.        

\section{Nonstandard Analysis}\label{NIST}
We introduce Nelson's \emph{internal set theory} $\IST$, a well-known axiomatic approach to Nonstandard Analysis first introduced in \cite{wownelly}.
An introduction to the \emph{practice} of $\IST$ may be found in \cite{iamrobert}.     
We discuss certain important fragments of $\IST$ from \cite{brie} in Section \ref{frag}.  
\subsection{Nelson's axiomatic approach to Nonstandard Analysis}\label{IST}
In Nelson's \emph{syntactic} (or `axiomatic') approach to Nonstandard Analysis (\cite{wownelly}), as opposed to Robinson's semantic one (\cite{robinson1}), a new predicate `st($x$)', read as `$x$ is standard' is added to the language of \textsf{ZFC}, the usual foundation of mathematics\footnote{The acronym $\ZFC$ stands for \emph{Zermelo-Fraenkel set theory with the axiom of choice}; see \cite{jech} for an introduction to set theory.}.  
The notations $(\forall^{\st}x)$ and $(\exists^{\st}y)$ are short for $(\forall x)(\st(x)\di \dots)$ and $(\exists y)(\st(y)\wedge \dots)$.  A formula is called \emph{internal} if it does not involve `st', and \emph{external} otherwise.   
The external axioms \emph{Idealisation}, \emph{Standardization}, and \emph{Transfer} govern the new predicate `st';  They are respectively defined\footnote{The superscript `fin' in \textsf{(I)} means that $x$ is finite, i.e.\ its number of elements are bounded by a natural number.} as:  
\begin{enumerate}
\item[\textsf{(I)}] $(\forall^{\st~\textup{fin}}x)(\exists y)(\forall z\in x)\varphi(z,y)\di (\exists y)(\forall^{\st}x)\varphi(x,y)$, for internal $\varphi$ with any (possibly nonstandard) parameters.  
\item[\textsf{(S)}] $(\forall^{\st} x)(\exists^{\st}y)(\forall^{\st}z)\big((z\in x\wedge \varphi(z))\asa z\in y\big)$, for any $\varphi$.
\item[\textsf{(T)}] $(\forall^{\st}t)\big[(\forall^{\st}x)\varphi(x, t)\di (\forall x)\varphi(x, t)\big]$, where $\varphi(x,t)$ is internal, and only has free variables $t, x$.  
\end{enumerate}
The system \textsf{IST} is the \emph{internal} system \textsf{ZFC} extended with the aforementioned \emph{external} axioms;  
The former is a conservative extension of \textsf{ZFC} for the internal language (\cite{wownelly}*{\S8}).  It goes without saying that 
the above extension may be done for a large spectrum of logical systems other than $\ZFC$.  In Section \ref{frag}, we study this extension of the usual axiomatisation of 
arithmetic, called \emph{Peano Arithmetic}.  As noted by Nelson, we can think of the standard sets of $\IST$ as those sets of mainstream mathematics.   
\begin{quote}
\emph{Every specific object of conventional mathematics is a standard set}. It remains unchanged in the new theory $[\IST]$. (\cite{wownelly}*{p.\ 1166})
\end{quote}
We shall use `mainstream' and `standard' mathematics interchangeably in the context of $\IST$.  
Finally, we discuss the intuitive meaning of the external axioms.  

\medskip

First of all, the \emph{contraposition} of \textsf{I} implies that for all internal $\varphi$:
\be\label{criv}
(\forall y)(\exists^{\st}x)\varphi(x,y) \di (\exists^{\st~\textup{fin}}x)\underline{(\forall y)(\exists z\in x)\varphi(z,y)},
\ee
where the underlined part is \emph{internal}.  Hence, intuitively speaking, \emph{Idealisation} allows us to `pull a standard quantifiers like $(\exists^{\st}x)$ in \eqref{criv} through a normal quantifier $(\forall y)$'.  Note that the axioms $B\Sigma_{n}$ of Peano arithmetic (\cite{buss}*{II}) play a similar role:  The former allow one to `pull an unbounded number quantifier through a bounded number quantifier'.  In each case, one obtains a formula in a kind of `normal form' with a block of certain quantifiers (resp.\ external/unbounded) up front followed by another block of different quantifiers (resp.\ internal/bounded).  The following example involving nonstandard continuity in $\IST$ is illustrative (See \cite{wownelly}*{\S5} for more examples).   
\begin{rem}\label{leffe}\rm
We say that $f$ is \emph{nonstandard continuous} on the set $X\subseteq \R$ if
\be\label{soareyouke}
(\forall^{\st}x\in X)(\forall y\in X)(x\approx y \di f(x)\approx f(y)),  
\ee
where $z\approx w$ is $(\forall^{\st} n\in \N)(|z-w|<\frac{1}{n})$.  Resolving `$\approx$' in \eqref{soareyouke}, we obtain  
\[\textstyle
(\forall^{\st}x\in X)(\forall y\in X)\big( (\forall^{\st}N\in\N) (|x- y|<\frac{1}{N}) \di (\forall^{\st}k \in\N)(|f(x)- f(y)|<\frac{1}{k})\big).  
\]
Using classical logic, we may bring out the `$(\forall^{\st}k\in \N)$' and $(\forall^{\st}N\in\N) $ quantifiers as follows:
\[\textstyle
(\forall^{\st}x\in X)(\forall^{\st}k\in \N)\underline{(\forall y\in X)(\exists^{\st}N\in\N)\big( |x- y|<\frac{1}{N} \di|f(x)- f(y)|<\frac{1}{k}\big)}.  
\]
Applying \textsf{I} as in \eqref{criv} to the underlined formula, we obtain a finite and standard set $z\subset \N$ such that $(\forall y\in X)(\exists N\in z)$ in the previous formula.  
Now let $N_{0}$ be the maximum of all numbers in $z$, and note that for $N=N_{0}$
\[\textstyle
(\forall^{\st}x\in X)(\forall^{\st}k\in \N)(\exists^{\st}N\in\N)(\forall y\in X)\big( |x- y|<\frac{1}{N} \di|f(x)- f(y)|<\frac{1}{k}\big).  
\]
The previous formula has all standard quantifiers up front and is very close to the `epsilon-delta' definition of continuity from mainstream mathematics.  
Hence, we observe the role of \textsf{I}: to connect the worlds of nonstandard mathematics (as in \eqref{soareyouke}) and mainstream mathematics.    
\end{rem}
Secondly, the axiom \emph{Transfer} expresses that certain statements about \emph{standard} objects are also true for \emph{all} objects.    This property is essential in proving the equivalence between so-called epsilon-delta statements and their nonstandard formulation.  The following example involving continuity is illustrative.
\begin{exa}\rm
Recall Example \ref{leffe}, the definition of nonstandard continuity \eqref{soareyouke} and the final equation in particular.  The latter yields the following by dropping the `st' for $N$:
\[\textstyle
(\forall^{\st}x\in X)(\forall^{\st}k\in \N)(\exists N\in\N)(\forall y\in X)\big( |x- y|<\frac{1}{N} \di|f(x)- f(y)|<\frac{1}{k}\big).  
\]
Assuming $X$ and $f$ to be standard, we can apply \textsf{T} to the previous to obtain 
\be\textstyle\label{kikj}
(\forall x\in X)(\forall k\in \N)(\exists N\in\N)(\underline{\forall y\in X) |x- y|<\frac{1}{N} \di|f(x)- f(y)|<\frac{1}{k}}
\ee
Note that \eqref{kikj} is just the \emph{usual epsilon-delta definition of continuity}.  In turn, to prove that \eqref{kikj} implies nonstandard continuity as in \eqref{soareyouke}, fix \emph{standard} $X, f, k$ in \eqref{kikj} and apply the contraposition of \textsf{T} to `$(\exists N\in \N)\varphi(N)$' where $\varphi$ is the underlined formula in \eqref{kikj}.  
The resulting formula $(\exists^{\st} N\in \N)\varphi(N)$ immediately implies nonstandard continuity as in \eqref{soareyouke}.  
\end{exa}
By the previous example, nonstandard continuity \eqref{soareyouke} and epsilon delta continuity \eqref{kikj} are equivalent for standard functions in $\IST$.  However, the former involves far less quantifier alternations and is close to the intuitive understanding of continuity as `no jumps in the graph of the function'. 
Hence, we observe the role of \textsf{T}: to connect the worlds of nonstandard mathematics (as in \eqref{soareyouke}) and mainstream mathematics (as in \eqref{kikj}).     
         
\medskip

Thirdly, \emph{Standardization} (also called \emph{Standard Part}) is useful as follows: It is in general easy to build \emph{nonstandard and approximate} solutions to mathematical problems in $\IST$, but a \emph{standard} solution is needed as the latter also exists in `normal' mathematics (as it is suitable for \emph{Transfer}).   Intuitively, the axiom \textsf{S} tells us that from a \emph{nonstandard approximate solution}, we can \emph{always} find a \emph{standard one}.  Since we may apply \emph{Transfer} to formulas involving the latter, we can then also prove the latter is an object of normal mathematics.  The following example is highly illustrative. 
\begin{exa}\rm
The \emph{intermediate value theorem} states that for every continuous function $f:[0,1]\di \R$ such that $f(0) f(1)< 0$, there is $x\in [0,1]$ such that $f(x)=0$.  Assuming $f$ is standard, it is easy\footnote{By Example \ref{leffe}, we may assume $f$ is nonstandard continuous.  Let $N$ be a nonstandard natural number and let $j\leq N$ be the least number such that $f(\frac{j}{N})f(\frac{j+1}{N})\leq 0$.  Then $f(j/N)\approx 0$ by nonstandard continuity, and we are done. \label{Exake}} to find a nonstandard real $y$ in the unit interval such that $f(y)\approx 0$, i.e.\ $y$ is an intermediate value `up to infinitesimals'.  
The axiom \textsf{S} then tells\footnote{Let $y\in [0,1]$ be such that $f(y)\approx 0$ and consider the set of rationals $z=\{ q_{1}, q_{1}, q_{2}, \dots, q_{N}\}$ where $q_{i}$ is a rational such that $|y-q_{i}|<\frac{1}{i}$ and $q_{i}=\frac{j}{2^{i}}$ for some $j\leq 2^{i}$, and $N$ is a nonstandard number.  Applying \textsf{S}, there is a standard set $w$ such that $(\forall^{\st}i)(q_{i}\in w)$.  The standard sequence $q_{i}$ converges to a standard real $x\approx y$.} us that there is a \emph{standard} real $x$ such that $x\approx y$, and by the nonstandard continuity of $f$ (See previous example), we have $f(x)\approx 0$.  Now apply \textsf{T} to the latter\footnote{Recall that `$f(x)\approx 0$' is short for $(\forall^{\st}k)(|f(x)|<\frac{1}{k})$.} to obtain $f(x)=0$.  Hence, we have obtain the (internal) intermediate value theorem for standard functions, and \textsf{T} yields the full theorem.            
\end{exa}
Hence, we observe the role of \textsf{S}: to connect the worlds of nonstandard mathematics and mainstream (standard) mathematics by providing standard objects `close to' nonstandard ones.    

\medskip

In conclusion, the external axioms of $\IST$ provide a connection between nonstandard and mainstream mathematics:  They allow one to `jump back and forth' between the standard and nonstandard world.  This technique is useful as some problems (like switching limits and integrals) may be easier to solve in the discrete/finite world of nonstandard mathematics than in the continuous/infinite world of standard mathematics (or vice versa).  This observation lies at the heart of Nonstandard Analysis and is a first step towards understanding its power.       

\subsection{Fragments of Nelson's internal set theory}\label{frag}
Fragments of $\IST$ have been studied before and we are interested in the systems $\P$ and $\H$ introduced in \cite{brie}.
 In a nutshell, $\P$ and $\H$ are versions of $\IST$ based on the usual classical and intuitionistic axiomatisations of arithmetic, namely \emph{Peano and Heyting arithmetic}.  
We refer to \cite{kohlenbach3} for the exact definitions of our version of Peano and Heyting arithmetic, commonly abbreviated respectively as $\textsf{E-PA}^{\omega}$ and $\textsf{E-HA}^{\omega}$.  
In particular, the systems $\P$ and $\H$ are conservative\footnote{Like for $\ZFC$ and $\IST$, if the system $\P$ (resp.\ $\H$) proves an internal sentence, then this sentence is provable in $\textsf{E-PA}^{\omega}$ (resp.\ \textsf{E-HA}$^{\omega}$).} extensions of \emph{Peano arithmetic} $\textsf{E-PA}^{\omega}$ and \emph{Heyting arithmetic} $\textsf{E-HA}^{\omega}$, as also follows from Theorem \ref{TERM}.  We discuss the systems $\P$ and $\H$ in detail in Sections~\ref{graf1} and \ref{graf2}, and list them in full detail in Section \ref{FULL}.  We discuss the reason why $\P$ and $\H$ are important to our enterprise in Section \ref{fraki}.  

\subsubsection{The classical system $\P$}\label{graf1}
We discuss the fragment $\P$ of $\IST$ from \cite{brie}.   Similar to the way $\IST$ is an extension of $\ZFC$,
$\P$ is just the internal system $\textsf{E-PA}^{\omega}$ with the language extended with a new standardness predicate `\st' and with some special cases of the external axioms of $\IST$.  The technical details of this extension may be found in Section \ref{FULL} while we now provide an intuitive motivation for the external axioms of $\P$, assuming basic familiarity with the finite type system of G\"odel's system $T$ (also discussed in Section \ref{FULL}).  

\medskip

First of all, the system $\P$ does not include any fragment of \emph{Transfer}.  The motivation for this omission is as follows:  
The system $\ACA_{0}$ proves the existence of the non-computable \emph{Turing jump} (\cite{simpson2}*{III}) and very weak fragments of \textsf{T} already imply versions of $\ACA_{0}$.
In particular, the following axiom is the \emph{Transfer} axiom limited to universal number quantifiers.  
  \be\tag{$\paai$}
(\forall^{\st}f^{1})\big[(\forall^{\st}n^{0})f(n)\ne0\di (\forall m)f(m)\ne0\big].
\ee
As proved in \cite{sambon}*{\S4.1}, the system $\P+\paai$ proves the existence of the Turing jump, which is also implicit in the appendix to \cite{fega}.  
Thus, due to its non-constructive nature, no \emph{Transfer} is present in $\P$.    

\medskip

Secondly, the system $\P$ involves the full axiom \emph{Idealisation} in the language of $\P$ as follows:  For any internal formula in the language of $\P$:
\be\label{wellhung}
(\forall^{\st} x^{\sigma^{*}})(\exists y^{\tau} )(\forall z^{\sigma}\in x)\varphi(z,y)\di (\exists y^{\tau})(\forall^{\st} x^{\sigma})\varphi(x,y), 
\ee
As it turns out, the axiom \textsf{I} does not yield any `non-computable' consequences, which also follows from Theorem \ref{TERM}.  

\medskip

Thirdly, the system $\P$ involves a weakening of \emph{Standardisation}.  
In particular, the axiom \textsf{S} may be equivalently formulated as follows:
\be\tag{$\textsf{S}$}
(\forall^{\st}x)(\exists^{\st}y)\Phi(x, y)\di \big(\exists^{\st}F\big)(\forall^{\st}x)\Phi(x,F(x)),
\ee
for any formula $\Phi$ in the language $\IST$.  In this way, \textsf{S} may be viewed as a `standard' version of the axiom of choice.    
In light of the possible non-constructive content of the latter (See Footnote \ref{koen}), it is no surprise that \textsf{S} has to be weakened.  
In particular, $\P$ includes the following version of \textsf{S}, called the \emph{Herbrandised Axiom of Choice}, which is defined as follows:
\be\label{HACINT2}\tag{$\HAC_{\INT}$}
(\forall^{\st}x^{\rho})(\exists^{\st}y^{\tau})\varphi(x, y)\di \big(\exists^{\st}G^{\rho\di \tau^{*}}\big)(\forall^{\st}x^{\rho})(\exists y^{\tau}\in G(x))\varphi(x,y),
\ee
where $\varphi$ is any internal function in the language of $\P$.  Note that $G$ does not output a witness for $y$, but a \emph{finite list} of potential witnesses to $y$.  
This is quite similar to \emph{Herbrand's theorem} (\cite{buss}*{I.2.5}), hence the name of $\HAC_{\INT}$.  

\medskip

Finally, we list two basic but important axioms of $\P$ from Definition \ref{debs} below.  These axioms are inspired by Nelson's claim about $\IST$ as follows:
\begin{quote}
\emph{Every specific object of conventional mathematics is a standard set}. It remains unchanged in the new theory $[\IST]$. (\cite{wownelly}*{p.\ 1166})
\end{quote}
Specific objects of the system $\P$ obviously include the constants  $0, 1, \times, +$, and anything built from those.  Thus, the system $\P$ includes the following two axioms; we refer to Definition \ref{debs} for the exact technical details.     
\begin{enumerate}  
\item All constants in the language of $\textsf{E-PA}^{\omega}$ are standard.
\item A standard functional applied to a standard input yields a standard output. 
\end{enumerate}
As a result, the system $\P$ proves that any term of $\textsf{E-PA}^{\omega}$ is standard.  
This will turn out to be essential in Section \ref{RF}.  

\subsubsection{The constructive system $\H$}\label{graf2}
We discuss the fragment $\H$ of $\IST$ from \cite{brie}.   Similar to the way $\IST$ is an extension of $\ZFC$,
$\H$ is just the internal system $\textsf{E-HA}^{\omega}$ with the language extended with a new standardness predicate `\st' and with some special cases of the external axioms of $\IST$.  The technical details of this extension may be found in Section \ref{FULL}; we now provide an intuitive motivation for the external axioms of $\H$, assuming basic familiarity with the finite type system of G\"odel's system $T$ (also discussed in Section \ref{FULL}).  Note that $\textsf{E-HA}^{\omega}$ is based on \emph{intuitionistic logic} as introduced in Section \ref{hitch}.    

\medskip

First of all, the system $\H$ does not involve \emph{Transfer} for the same reasons $\P$ does not.  By contrast the axioms $\HAC_{\INT}$ and \eqref{wellhung} (and its contraposition) are included in $\H$, with the restrictions on $\varphi$ lifted even.  

\medskip

Secondly, the system $\H$ includes some `non-constructive' axioms relativized to `\st'.  We just mention the names of these axioms and refer to Section \ref{FULL} for a full description.  
The system $\H$ involves nonstandard versions of the following axioms: \emph{Markov's pricinciple} (See e.g.\ \cite{beeson1}*{p.\ 47}) and the independence of premises principle (See e.g.\ \cite{kohlenbach3}*{\S5}).  \emph{Nonetheless}, the system $\H$ proves the same internal sentence as $\textsf{E-HA}^{\omega}$ by Theorem \ref{TERM}, i.e.\ the nonstandard versions are not really non-constructive.  

\medskip 

Finally, $\H$ also includes the basic axioms from Definition \ref{debs} as listed at the end of Section \ref{graf1} above.

\subsubsection{The importance of $\P$ and $\H$}\label{fraki}
We discuss why $\H$ and $\P$ are important to our enterprise.  In a nutshell, these systems allow one to obtain computational content from Nonstandard Analysis by the following\footnote{The full version of Theorem \ref{TERM} is Corollary \ref{consresultcor} in Section \ref{FULL}.} `term extraction' theorem.  The scope of this theorem includes a huge part of Nonstandard Analysis as discussed in Section \ref{scope}.  
\begin{thm}[Term extraction]\label{TERM}
Let $\varphi$ be internal, i.e.\ not involving `$\st$'. \\
If $\P$ \(resp.\ $\H$\) proves $(\forall^{\st}x)(\exists^{\st}y)\varphi(x,y)$, then we can extract a term $t$ from this proof such that $\textsf{\textup{E-PA}}^{\omega}$ \(resp.\ $\textsf{\textup{E-HA}}^{\omega}$\) proves $(\forall x)(\exists y\in t(x))\varphi(x,y)$.  
The term $t$ is such that $t(x)$ is a finite list.    
\end{thm}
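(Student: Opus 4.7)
The plan is to prove the theorem as a soundness result for a Dialectica-style functional interpretation adapted to nonstandard arithmetic, in the spirit of van den Berg--Briseid--Safarik (\cite{brie}). Concretely, I would assign to every formula $\Phi$ in the extended language $\{0,1,+,\times,\st\}$ of $\P$ (resp.\ $\H$) a formula
\[
\Phi^{D_{\st}} \equiv (\exists^{\st}\underline{x})(\forall^{\st}\underline{y})\,\Phi_{D_{\st}}(\underline{x},\underline{y}),
\]
where $\Phi_{D_{\st}}$ is internal and $\underline{x},\underline{y}$ are finite tuples of variables of higher type. The translation is defined by recursion: on internal atomic formulas it acts as the identity; on connectives and (standard/unrestricted) quantifiers one mimics the usual Dialectica clauses, but with a \emph{Herbrandized} twist on the existential standard quantifier, so that standard existential witnesses are replaced by finite tuples of candidate witnesses (matching the shape of $\HACint$).

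Next I would establish the central \emph{soundness theorem}: if $\P$ (resp.\ $\H$) proves $\Phi$, then there is a term $\underline{t}$ of $\textsf{E-PA}^{\omega}$ (resp.\ $\textsf{E-HA}^{\omega}$), extractable from the proof, such that $\textsf{E-PA}^{\omega}$ (resp.\ $\textsf{E-HA}^{\omega}$) proves $(\forall^{\st}\underline{y})\,\Phi_{D_{\st}}(\underline{t},\underline{y})$, and hence also the purely internal statement obtained by dropping all `$\st$'. The proof is by induction on the derivation; the constructive case $\H$ is handled directly, and the classical case $\P$ is reduced to it via a preliminary negative-style translation (since the Dialectica interpretation of full classical logic needs Markov-style principles, and these are available in the standardised form already present in $\H$). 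One must then verify that every axiom has an interpretation witnessed by an explicit term: the logical axioms reduce to the usual Dialectica checks; the axioms saying that constants and applications of standard functionals to standard inputs are standard (Definition \ref{debs}) are witnessed trivially; the Idealisation axiom \eqref{wellhung} is witnessed because after interpretation the existential quantifier $(\exists^{\st} x^{\sigma^{*}})$ asks for a finite sequence, and such a sequence is produced by collecting the witnesses supplied by the premise; and $\HACint$ is interpretable essentially by definition, since it is the Herbrandization of the very choice schema that the interpretation would otherwise require.

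Finally, I would specialize to the case at hand. If $\Phi = (\forall^{\st}x)(\exists^{\st}y)\varphi(x,y)$ with $\varphi$ internal, then the interpretation of the inner $(\exists^{\st}y)\varphi(x,y)$ is (after Herbrandization) $(\exists^{\st} s)(\exists y \in s)\varphi(x,y)$, and the full interpretation yields a standard functional $\underline{t}$ with $(\forall^{\st}x)(\exists y\in \underline{t}(x))\varphi(x,y)$. Applying the soundness theorem extracts $\underline{t}$ as a term of the internal base theory, and dropping the standardness predicates (which occur only in front of universal quantifiers and can be discarded since the claim is internal) gives the statement $(\forall x)(\exists y\in \underline{t}(x))\varphi(x,y)$ as required.

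The main technical obstacle is verifying the soundness clause for the classical system $\P$: one has to either (a) design the interpretation so that the axioms $\P$ adds on top of intuitionistic logic, in particular the form of idealisation in \eqref{wellhung} for \emph{classical} internal $\varphi$, are still witnessed by terms, or (b) push the classical case through a negative translation into $\H$ while checking that all of $\P$'s axioms translate to provable statements of $\H$. Either route requires careful bookkeeping of which standard quantifiers become existential after translation, and this is precisely where the Herbrandization in $\HACint$ earns its keep, since it allows finite lists of witnesses instead of honest choice functions, keeping everything within the constructive reach of $\textsf{E-HA}^{\omega}$.
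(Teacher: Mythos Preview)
Your proposal is correct and matches the approach the paper relies on: the paper proves Theorem~\ref{TERM} by invoking the soundness theorems of van den Berg--Briseid--Safarik (\cite{brie}*{Theorems 5.9 and 7.7}) and then checking that a normal form $(\forall^{\st}x)(\exists^{\st}y)\varphi(x,y)$ is a fixed point of the relevant translation (Corollary~\ref{consresultcor}), which is exactly the soundness-of-a-Herbrandized-Dialectica argument you have sketched. The only cosmetic difference is that for the classical system $\P$ the paper (following \cite{brie}*{\S7}) uses a direct Shoenfield-style interpretation $S_{\st}$ with target form $(\forall^{\st}\underline{x})(\exists^{\st}\underline{y})\Phi_{S_{\st}}$ rather than your route of negative translation followed by $D_{\st}$; since the Shoenfield interpretation factors as negative translation composed with Dialectica, the two routes are equivalent.
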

Note that the \emph{conclusion} of the theorem, namely `$\textsf{E-PA}^{\omega}$ proves $(\forall x)(\exists y\in t(x))\varphi(x,y)$', \emph{does not involve Nonstandard Analysis}. 
The term $t$ from the previous theorem is essentially a computer program formulated in e.g.\ Martin-L\"of type theory or Agda (\cites{loefafsteken, agda}).  
As such, $t$ is definitely an `algorithm' in the sense of Bishop (See Section \ref{hitch}).  

\medskip

As a first example of the ubiquity of computational content in Nonstandard Analysis, we now consider an elementary application of Theorem \ref{TERM} to nonstandard continuity as in Example \ref{leffe}.
More examples may be found in Section \ref{RF} and \cite{sambon}. 
\begin{exa}[Nonstandard and constructive continuity]\label{krel}\rm
Suppose $f$ is a function defined on the reals which is nonstandard continuous, provable in $\P$.  In other words, similar to Example \ref{leffe}, the following is provable in $\P$:
\be\label{frik}
(\forall^{\st}x\in \R)(\forall y\in \R)(x\approx y \di f(x)\approx f(y)).
\ee    
Since $\P$ includes \emph{Idealisation} (essentially) as in $\IST$, $\P$ also proves the following:
\begin{align}\label{exagoe}\textstyle
(\forall^{\st}x\in \R)(\forall^{\st}k\in \N)&(\exists^{\st}N\in\N)\\
&\textstyle\underline{(\forall y\in \R)\big( |x- y|<\frac{1}{N} \di|f(x)- f(y)|<\frac{1}{k}\big)},\notag
\end{align}
in exactly the same way as proved in Example \ref{leffe}.  Since the underlined formula in \eqref{exagoe} is internal, we note that Theorem \ref{TERM} applies to `$\P\vdash \eqref{exagoe}$'.  Applying the latter theorem, 
we obtain a term $t^{(1\times 0)\di 0^{*}}$ such that $\textsf{E-PA}^{\omega}$ proves:
\[\textstyle
(\forall  x\in\R, k\in \N)(\exists N\in t(x, k)){(\forall y\in \R)\big( |x- y|<\frac{1}{N} \di|f(x)- f(y)|<\frac{1}{k}\big)}.
\]
Since $t(x, k)$ is a finite list of natural numbers, define $s(x, k)$ as the maximum of $t(x, k)(i)$ for $i<|t(x, k)|$ where $|t(x,k)|$ is the length of the finite list $t(x,k)$.  
The term $s$ is called a \emph{modulus} of continuity of $f$ as it satisfies:
\[\textstyle
(\forall  x\in\R, k\in \N){(\forall y\in \R)\big( |x- y|<\frac{1}{s(x, k)} \di|f(x)- f(y)|<\frac{1}{k}\big)}.
\]
Similarly, from the proof in $\P$ that $f$ is nonstandard \emph{uniformly} continuous, we may extract a modulus of \emph{uniform} continuity (See Section \ref{CTT}).  
This observation is \emph{important}: moduli are an essential part of Bishop's \emph{Constructive Analysis} from Section~\ref{hitch} (See e.g.\ \cite{bish1}*{p.\ 34}), so we just proved that such constructive information \emph{is implicit in the nonstandard notion of continuity}!  
\end{exa}
By the previous example, the nonstandard notion of (uniform) continuity contains non-trivial \emph{constructive} information.  
It is a natural question how far this goes, i.e.\ how large is the scope of Theorem \ref{TERM}?  As it turns out, the scope of the latter is huge, as we discuss in Section \ref{scope} below.    
In a nutshell, other nonstandard definitions (of integration, differentiability, compactness, convergence, et cetera) behave in exactly the same way as continuity in Example~\ref{krel}, and the same holds for theorems 
solely formulated with these nonstandard definitions.

\subsection{Constructive Nonstandard Analysis}\label{palmke}
While most of this paper deals with \emph{classical} Nonstandard Analysis, we now discuss \emph{constructive} Nonstandard Analysis as its insights will be needed below.  
Note that constructive mathematics (in the sense of Bishop) is introduced in Section \ref{hitch}.  
We already have the system $\H$ as an example of the syntactic approach to constructive Nonstandard Analysis, and we now discuss the \emph{semantic approach}.  

\medskip

Intuitively, the \emph{semantic} approach to Nonstandard Analysis pioneered by Robinson (\cite{robinson1}) consists in somehow building a nonstandard model of a given structure (say the set of real numbers $\R$) and proving that the original structure is a strict subset of the nonstandard model (usually called the set of \emph{hyperreal numbers} $^{*}\R$) while establishing properties similar to \emph{Transfer}, \emph{Idealisation} and \emph{Standardisation} as theorems of this model and the original structure.  Historically, Nelson of course studied Robinson's work and axiomatised the semantic approach in his internal set theory $\IST$.  The most common way of building a suitable nonstandard model is using a \emph{free ultrafilter} (See e.g.\ \cite{loeb1,nsawork2}).  The existence of the latter is a rather strong \emph{non-constructive} assumption.            

\medskip

As it turns out, building nonstandard models with nice properties like \emph{Transfer} can also be done constructively:
Palmgren in \cite{opalm}*{Section 2} and \cite{nostpalm} constructs a
nonstandard model $\M$ (also called a `sheaf' model) satisfying the \emph{Extended Transfer Principle} by \cite{opalm}*{Corollary 4 and Theorem 5}.   
As noted by Palmgren (\cite{opalm}*{p.\ 235}), the construction of $\M$ can be formalised in 
Martin-L\"of's constructive type theory (\cite{loefafsteken}).  The latter was developed independently of Bishop's constructive mathematics (See Section \ref{hitch}), but can be viewed 
as a foundation of the latter.

\section{The Bishop-Connes critique}\label{PO}
We discuss the critique of Nonstandard Analysis by Errett Bishop (Section \ref{loper}) and Alain Connes (Section \ref{koko}).   This critique can be summarised as follows:  
\begin{center}
\emph{The presence of ideal objects \(in particular infinitesimals\) in Nonstandard Analysis yields the absence of computational content.}
\end{center}
and will be called the \emph{Bishop-Connes critique} (of Nonstandard Analysis).  
As may be expected in light of their different philosophies of mathematics\footnote{See Section \ref{hitch} (resp.\ Section \ref{koko}) for a discussion of Bishop's (resp.\ Connes') philosophy of mathematics.}, Connes and Bishop have diverging opinions on what constitutes `computational content', while they also have different areas in mind:  Bishop wrote on the foundations of mathematics while Connes had applications to physics in mind.    
Furthermore, the arguments for this critique by Bishop and Connes have been studied and found wanting, as discussed below.  
Nonetheless, this critique was never fully refuted, and we undertake this task in Section \ref{RF}.  

\subsection{Bishop's critique of Nonstandard Analysis}\label{loper}
Before we can discuss Bishop's critique of Nonstandard Analysis, we need to study his philosophy of mathematics in Section \ref{hitch}.  
\subsubsection{Bishop's philosophy of mathematics}\label{hitch}
There can be little doubt about the philosophical position of Bishop in light of his monograph \emph{Foundations of Constructive Analysis} (\cite{bish1}), in which the first chapter is titled \emph{A constructivist manifesto} and the preface reads:
\begin{quote}
This book is a piece of constructivist propaganda, designed to show
that there does exist a satisfactory alternative [to classical mathematics]. To this end we develop
a large portion of abstract analysis within a constructive framework. (\cite{bish1}*{p.\ ix})
\end{quote}
\begin{quote}
Our program is simple: To give numerical meaning to as much as possible of classical abstract analysis. Our motivation is the well-known scandal, exposed by Brouwer (and others) in great detail, that classical mathematics is deficient in numerical meaning. (\cite{bish1}*{p.\ ix})
\end{quote}
Bishop thus subscribes to \emph{constructivism}, a position which distinguishes itself from mainstream\footnote{Note that outside of the context of $\IST$, `mainstream' mathematics just has its usual meaning in this paper.} (or `classical') mathematics by the insistence that a mathematical object only exists once it has been constructed (in some way). 

\medskip

For instance, constructivists generally reject \emph{proof by contradiction} as this proof technique concludes\footnote{To prove that $(\exists x)A(x)$ by contradiction in classical mathematics, one assumes $(\forall x)\neg A(x)$ and derives a contradiction, i.e.\ one shows that $\neg[(\forall x)\neg A(x)]$.  Using the \emph{law of excluded middle $B\vee \neg B$}, one then concludes that $(\exists x)A(x)$} the existence of an object \emph{without constructing it}.  More generally, the law of excluded middle $B\vee \neg B$ (LEM for short) and axioms implying it\footnote{The axiom of choice implies LEM under certain conditions (\cite{dias}), but fragments of the axiom of choice are considered acceptable in constructive mathematics (See \cite{bridges1}*{\S1.4}).\label{koen}} are rejected in constructivism due to the lack of constructive content of LEM.     
This rejection becomes more palatable upon observing the interpretation of the logical symbols in constructivism, referred to as the \emph{Brouwer-Heyting-Kolmogorov} (BHK for short) interpretation (See \cite{bridge1}*{\S1}).  
\begin{defi}[BHK-interpretation]\label{kafi}
~\rm
\begin{enumerate}
\item The disjunction $P\vee Q$: we have an algorithm that outputs either $P$ or $Q$, together with a proof of the chosen disjunct.  
\item The conjunction $P\wedge Q$: we have both a proof of $P$ and a proof of $Q$.
\item The implication $P \di Q$: by means of an algorithm we can convert any proof of $P$ into a proof of $Q$.\label{hollyschijt}
\item The negation $\neg P$: assuming $P$, we can derive a contradiction (such as $0=1$); equivalently, we can prove $P\di (0=1)$.
\item The formula $(\exists x)P(x)$: we have (i) an algorithm that computes a certain object $x$, and (ii) an algorithm that, using the information supplied by the application of algorithm (i), demonstrates that $P(x)$ holds.
\item The formula $(\forall x\in A)P(x)$: we have an algorithm that, applied to an object $x$ and a proof that $x\in A$, demonstrates that $P(x)$ holds.
\end{enumerate}
\edefi
In light of the first item, the law of excluded middle LEM states the existence of an algorithm which can decide whether a given mathematical theorem is provable or not.  
Since nobody believes such an algorithm will ever be found, the rejection of LEM \emph{given the BHK interpretation} becomes clear.

\medskip

What is interesting about Bishop's version of constructive mathematics, is its `neutral' position, which we discuss in more detail.
Now, there are a number of approaches to constructive mathematics, as discussed at length in e.g.\ \cite{beeson1}*{III}, \cite{troeleke1}*{I.4}, or \cite{brich}, and Bishop's variety, called `BISH', can be said to occupy a `neutral' position between some of these and classical mathematics.  
In particular, any theorem of BISH is also a theorem of classical mathematics, a theorem of Brouwer's intuitionistic mathematics\footnote{Brouwer's intuitionism is discussed at length by Dummett in \cite{dummy}, including \emph{Brouwer's theorem} that all total functions on the unit interval must be (uniformly) continuous (\cite{dummy}*{Theorem 3.19}), which contradicts classical mathematics.}, and a theorem in the Russian school of recursive\footnote{The name `recursive' mathematics betrays that all mathematical objects must be recursive (called `computable' nowadays), as captured in the axiom \emph{Church's Thesis} (See \cite{brich}*{Chapter 3} or \cite{beeson1}*{I.8} for details), which contradicts classical mathematics.} mathematics, while the latter two are inconsistent with one another and with classical mathematics.   

\medskip

This neutral position comes at a price though:  To guarantee the aforementioned compatibility of BISH with classical, intuitionistic, and Russian recursive mathematics, the notion of `algorithm' is left unspecified by Bishop, \emph{despite its central role in light of the BHK-interpretation}.  
The following quote by Bridges captures the previous nicely.
\begin{quote}
Although Bishop has been criticised for being too vague in his concept of algorithm, by this very vagueness he left open the possibility of interpreting his work within a variety of formal systems. Not only is every theorem of BISH also a theorem of recursive constructive mathematics - which is, roughly, recursive function theory developed with intuitionistic logic - but it is also a theorem of Brouwer's intuitionistic mathematics, and, perhaps more significantly, of classical mathematics. (\cite{bridgetoofar}*{p.\ 2})
\end{quote}
Note that in this paper we do not judge this design choice made by Bishop: We merely point out a central aspect of BISH, namely a `know-it-when-you-see-it' approach to the notion of algorithm resulting in a `informal but rigorous' style.  

\medskip

Finally, we wish to point out that while Bishop was critical of classical mathematics, the \emph{main point} of his constructivist enterprise was \emph{not} this criticism, but to build a `computationally rich' alternative to classical mathematics.  
We consider two telling examples.  

\medskip

First of all, according to Bishop, while classical (or `idealistic') mathematics lacks computational content, it is not necessarily `worthless': 
\begin{quote}
\dots idealistic mathematics is [not] worthless from
the constructive point of view. This would be as silly as contending
that unrigorous mathematics is worthless from the classical point of
view. Every theorem proved with idealistic methods presents a 
challenge: to find a constructive version, and to give it a constructive proof. (\cite{bish1}*{p.\ x})
\end{quote}
Note that we do not claim that the `constructivisation' of classical mathematics is the only goal of Bishop or his followers (See \cite{nukino}*{p.\ 54} for a discussion).  

\medskip

Secondly, while Bishop is clear about his nominalist reservations regarding classical mathematics (going as far as comparing the latter to `God's mathematics' in \cite{bish1}*{\S1}), he does not have skeptical doubts about e.g.\ basic arithmetic (based on intuitionistic logic obviously).  For instance, he takes for granted the set of natural numbers and the associated axiom of induction. 
\begin{quote}   
The positive integers and their arithmetic are presupposed by
the very nature of our intelligence and, we are tempted to believe, by
the very nature of intelligence in general. The development of the
theory of the positive integers from the primitive concept of the unit,
the concept of adjoining a unit, and the process of mathematical
induction carries complete conviction. (\cite{bish1}*{\S1.1})
\end{quote}
Furthermore, Bishop has no problems accepting much more complicated mathematical objects (as is clear from the below quote), \emph{as long as} these have been built using (and can in principle be reduced to) algorithmic reasoning.  
\begin{quote}
Building on the positive integers, weaving a web of ever more sets
and more functions, we get the basic structures of mathematics: the
rational number system, the real number system, the euclidean spaces,
the complex number system, the algebraic number fields, Hilbert
space, the classical groups, and so forth. Within the framework of
these structures most mathematics is done. Everything attaches itself
to number, and every mathematical statement ultimately expresses the
fact that if we perform certain computations within the set of positive
integers, we shall get certain results. (\cite{bish1}*{\S1.1})
\end{quote}
We summarise that Bishop judged classical mathematics to be deficient in computational content, and that he took it upon himself to develop a kind of mathematics which is compatible with both classical and constructive approaches to mathematics and in which computational content is central.  In a nutshell, Bishop equates meaning with computational content, i.e.\ meaningful mathematics with mathematics as developed in BISH where every statement has numerical meaning and every object has an algorithmic description.  Finally, we are by no means the first to make this observation: the assertion that Bishop equates meaning and computational content is discussed in detail in \cite{kaka}*{\S6.2}.      

\subsubsection{An alternative view of constructive mathematics}\label{kikop}
 We described Bishop's philosophy of mathematics in the previous section.  We now present an alternative view of constructive mathematics first formulated by Richman (\cite{poorguy,poorguy2}).  The basic question we are considering is as follows:  
\begin{center}
What is the nature of objects in Bishop's Constructive Analysis?      
\end{center}
As is clear from the previous section, Bishop describes his mathematics as dealing with objects which are `given by an algorithm', as explicitly stated in e.g.\ \cite{bridges1}*{p.\ 14} and \cite{schizo}*{p.\ 15}, but the notion of algorithm is not defined.  We also saw that Bishop had good motivations for leaving the notion of algorithm undefined.  
We now discuss another view of constructive mathematics which sidesteps the previous question by placing the underlying (intuitionistic) logic at the forefront, while the constructive ontology `everything is given by algorithms' is de-emphasised.  

\medskip

In a nutshell, experience bears out that Bishop's Constructive Analysis simply amounts to \emph{mathematics using intuitionistic logic} in practice, i.e.\ the above question and the nature of Bishop's notion of algorithm can be sidestepped.  Bridges and Palmgren nicely formulate this as follows.     
\begin{quote}   
However, this criticism [that Bishop left the notion of algorithm undefined] can be overcome by looking more closely at what practitioners of BISH actually do, as distinct from what Bishop may have thought he was doing, when they prove theorems: in practice, they are doing mathematics with intuitionistic logic. Experience shows that the restriction to intuitionistic logic always forces mathematicians to work in a manner that, at least informally, can be described as algorithmic; so algorithmic mathematics appears to be equivalent to mathematics that uses only intuitionistic logic. If that is the case, then we can practice constructive mathematics using intuitionistic logic on any reasonably defined mathematical objects, not just some class of ``constructive objects''. 
(\cite{pabi}*{\S3.3}; quotes in the original)
\end{quote}
Obviously, the above emphasis on logic is at odds with the primacy of mathematics over logic that was part of the philosophy of Brouwer, Heyting, Markov, Bishop, and other pioneers of constructivism. On the other hand, as stated by Bridges in \cite{pabi}*{\S3.3}, this emphasis does capture the essence of constructive mathematics \emph{in practice}.

\subsubsection{Bishop on Nonstandard Analysis}
Bishop's constructivist convictions have been made clear, as well as the intentions of his program for the redevelopment of mathematics based on computational content.  Despite this predominant `positive' aspect of Bishop's enterprise, classical mathematics often received harsh criticism, and he even went as far as announcing its demise (which has not materialised so far) as follows.  
\begin{quote}
Very possibly classical mathematics will cease to exist as an independent discipline. (\cite{nukino}*{p.\ 54})
\end{quote}
Of course, Robinsonian\footnote{There are a number of constructive approaches to Nonstandard Analysis (See e.g.\ \cites{palmdijk,palm4,palm1,palm2,palm3, evenbellen}) which we briefly discuss in Section \ref{palmke}.} Nonstandard Analysis is part of classical mathematics, and therefore on the receiving end of Bishop's criticism of classical mathematics.  
Nonetheless, Bishop felt the need to single out Nonstandard Analysis on a number of occasions.  
We now consider three such negative statements by Bishop about Nonstandard Analysis, as they are relevant to the formulation of Bishop's view on Nonstandard Analysis.  A much more thorough discussion of these matters may be found in \cites{kaka}.  

\medskip

First of all, in the following quote Bishop criticises the apparent lack of meaning, which to him means `computational content', of Nonstandard Analysis, as well as the latter's introduction at the undergraduate level.  
\begin{quote}
A more recent attempt at mathematics by formal finesse is non-standard analysis. I gather that it has met with some degree of success, whether at the expense of giving significantly less meaningful proofs I do not know. My interest in non-standard analysis is that attempts are being made to introduce it into calculus courses. It is difficult to believe that debasement of meaning could be carried so far. \cite{kluut}*{p. 513}  
\end{quote}
Secondly, the following quote on the `meaning' of Nonstandard Analysis may be found in Bishop's notes from a summer school in New Mexico (\cite{bishl}).
\begin{quote}
[Constructive and Nonstandard Analysis] are at opposite poles.
Constructivism is an attempt to deepen the meaning of mathematics; non-standard analysis, an attempt to dilute it further. (\cite{bishl}*{p.\ 1-2})
\end{quote}
Thirdly, Bishop was asked to review Keisler's introduction to Nonstandard Analysis \cite{keisler3} which has been used for teaching at the undergraduate level.  
The final sentence of Bishop's review reads as follows.  
\begin{quote}
Now we have a calculus text that can be used to confirm their experience of mathematics as an esoteric and meaningless exercise in technique. \cite{bitch}*{p.\ 208}
\end{quote}
It is important to note that Bishop did not make his constructivist convictions explicit in \cite{bitch}, i.e.\ the reader is not informed that Bishop equates `meaning' and `computational content'.  
It should also be noted that Bishop's views are not necessarily shared by other constructivists.  For instance, the intuitionist Heyting spoke highly of Robinson's Nonstandard Analysis (\cites{heyting, kaka}).  

\medskip

As it happens, Bishop's review of \cite{keisler3} has been studied by historians of mathematics and classified rather unfavourably as follows:  Artigue \cite{art}*{p. 172} describes Bishop's review as `virulent'; Dauben (\cite{daupje}) as `vitriolic'; Davis and Hauser (\cite{dahaus}) as `hostile'; and Tall (\cite{tallmann}), as `extreme'.  Furthermore, Robinson himself added the following short but forceful judgement to his review of \cite{bish1}.  
\begin{quote}
The sections of [Bishop's] book that attempt to describe the philosophical and historical background of [the] remarkable endeavor [of Intuitionism] are more vigorous than accurate and tend to belittle or ignore the efforts of others who have worked in the same general direction (\cite{robninson2}*{p.\ 921}).      
\end{quote}
In light of these rather strong words, one has to wonder what specifically could be wrong with Nonstandard Analysis, among the plethora fields of classical mathematics, that prompted Bishop's ire and scorn? Katz and Katz (\cite{kaka}*{\S3.3}) discuss this question in detail and suggest an explanation based on three technical reasons, and a philosophical one.  
While their four arguments are perfectly valid in our opinion, we would rather `address the elephant in the room' as follows:  As noted in Sections \ref{hitch} and~\ref{kikop}, Bishop considers his mathematics to be concerned with \emph{constructive objects}, i.e.\ those described by algorithms on the integers.  The following quote is worth repeating.  
\begin{quote}  
Everything attaches itself
to number, and every mathematical statement ultimately expresses the
fact that if we perform certain computations within the set of positive
integers, we shall get certain results. (\cite{bish1}*{\S1.1})
\end{quote}
This quote suggest a \emph{fundamental} ontological divide between Bishop's mathematics and Nonstandard Analysis, as the latter \emph{by design} is based on ideal objects (like infinitesimals) with prima facia \emph{no algorithmic description at all}.   
In this light, it is not a stretch of the imagination to classify Nonstandard Analysis as \emph{fundamentally non-constructive}, i.e.\ antipodal to Bishop's mathematics, or in Bishop's words: \emph{an attempt to dilute meaning/computational content further}.  
Furthermore, this  `first impression' is only confirmed by the fact that the `usual' development of Nonstandard Analysis involves \emph{quite non-constructive}\footnote{The usual development of Robinson's Nonstandard Analysis proceeds via the construction of a nonstandard model using a free ultrafilter.  The existence of the latter is only slightly weaker than the axiom of choice of $\ZFC$ (\cite{nsawork2}).} axioms.  The latter is formulated by Katz and Katz as:
\begin{quote}
[\dots] the hyperreal approach incorporates an element of non-constructivity at the basic level of the very number system itself. (\cite{kaka}*{\S3.3})
\end{quote}
Finally, the aforementioned `first impression' does not seem to disappear if one considers more basic mathematics, e.g.\ arithmetic rather than set theory.
Indeed, \emph{Tennenbaum's theorem} \cite[\S11.3]{kaye} `literally' states that any nonstandard model of Peano Arithmetic is not computable.  \emph{What is meant} is that for a nonstandard model $\M$ of Peano Arithmetic, the operations $+_{\M}$ and $\times_{\M}$ cannot be computably defined in terms of the operations $+_{\N}$ and $\times_{\N}$ of the standard model $\N$ of Peano Arithmetic.  
In other words, Robinson's model-theoretic approach to Nonstandard Analysis seems fundamentally non-constructive even at the level of arithmetic.   

\medskip

In light of the above, we arrive at a possible explanation why Bishop singled out Nonstandard Analysis among all of classical mathematics:  Bishop's mathematics (in his view) deals with `constructive objects given by an algorithm' while (again in his view) Nonstandard Analysis by contrast is based on `non-constructive objects devoid of algorithmic description'.  Thus, while classical mathematics is obviously non-constructive in the sense of Bishop, Nonstandard Analysis seems to cheerfully take non-construcitivity to a whole new level by adopting non-constructive objects at a fundamental level.  Almost as an aside, we arrive at Bishop's critique of Nonstandard Analysis:
\begin{center}
\emph{The presence of ideal objects in Nonstandard Analysis yields the absence of computational content.}
\end{center}
From Bishop's point of view, the previous statement is almost trivial:  Constructive mathematics is built up from the ground (the integers) and all objects are based on algorithms.  
Introducing ideal objects, as is done \emph{at a fundamental level} in Nonstandard Analysis, is so contrary to the approach in Bishop's mathematics, the former must lead to a realm devoid of computational content.  To put it bluntly, Nonstandard Analysis thus occupies a special place in hell, i.e.\ classical mathematics, according to Bishop.    

\medskip

In conclusion, we have provided an explanation why Bishop singled out Nonstandard Analysis among all of classical mathematics.   
This explanation naturally led us to Bishop's critique of Nonstandard Analysis as formulated above.  Note that this critique is at least partially 
unfounded in light of the existence of \emph{constructive Nonstandard Analysis} as discussed in Section \ref{palmke}.  Nonetheless, this existence does not prove or disprove 
that \emph{classical} Nonstandard Analysis is fundamentally non-constructive.  

\subsection{Connes' critique of Nonstandard Analysis}\label{koko}
The Fields medallist Alain Connes has formulated negative criticism of classical Nonstandard Analysis in print on at least seven occasions.  
The first table in \cite{kano2}*{\S 3.1} runs a tally for the period 1995-2007. 
Connes judgements range from \emph{inadequate} and \emph{disappointing}, to \emph{a chimera} and \emph{irremediable defect}.  However, the judgements most interesting to us, especially in light of Bishop's critique from the previous section, pertain to the constructive\footnote{Note that Connes uses the word `constructive' as synonymous with `effective' and `explicit' from mainstream mathematics, i.e.\ no connection with constructivism seems present.\label{frikl}}/computable nature of infinitesimals and Nonstandard Analysis, as captured by the following quotes.  
\begin{quote}
Thus a non-standard number gives us canonically a non-measurable subset of $[0,1]$. This is the end of the rope for being `explicit' since (from another side of logics) one knows that it is just impossible to construct explicitely a non-measurable subset of $[0,1]$! 
(Verbatim copy of the text in \cite{conman3})
\end{quote}
\begin{quote}
 The point is that as soon as you have a non-standard number, you get a non-measurable set. And in Choquet's circle, having well studied the Polish school, we knew that every set you can name is measurable. So it seemed utterly doomed to failure to try to use non-standard analysis to do physics.  (\cite{conman}*{p.\ 26})
\end{quote}
\begin{quote}
The answer given by non-standard analysis, namely a nonstandard real, is equally disappointing: every non-standard real canonically determines a (Lebesgue) non-measurable subset of the interval $[0,1]$, so that it is impossible \cite{sterno} to exhibit a single [nonstandard real number]. The formalism that we propose will give a substantial and computable answer to this question. (\cite{conman4}*{p.\ 320})
\end{quote}
\begin{quote}
Suppose that a dart is thrown to the target of [Fig.\ 5 on \cite{conman2}*{p.\ 6207}, at the top]; then what is the probability of hitting a given point.
[In \cite{uncleberny} the authors claim that] the sought for [probability] makes sense, as a nonstandard positive real.  The problem with this proposed solution is that there is no way one can exhibit this infinitesimal.  [\dots] Our theory of infinitesimal variables is completely different [from Nonstandard Analysis], and it will give a precise computable answer to the above question
\end{quote}
Each of the previous quotes (and especially the last one) suggests that Nonstandard Analysis is somehow fundamentally non-constructive$^{\ref{frikl}}$ or non-computable in nature due to the presence of infinitesimals.  Thus, we may formulate Connes critique of Nonstandard Analysis as follows:
\begin{center}
\emph{The presence of ideal objects \(in particular infinitesimals\) in Nonstandard Analysis yields the absence of computational content.}
\end{center} 

As it happens, Connes refines this criticism in \cite{conman2} by formulating three major problems of Nonstandard Analysis which his proposed alternative formalism allegedly fixes.  
As discussed in \cite{kaka, kano2, gaanwekatten}, Connes' formalism suffers from at least two of these same problems.  In particular, Katz and Leichtnam note the following:  
\begin{quote}
Thus, two-thirds of Connes' critique of Robinson's infinitesimal approach can be said to be incoherent, in the specific sense of not being coherent with what Connes writes (approvingly) about his own infinitesimal approach. (\cite{gaanwekatten}*{p.\ 640})
\end{quote}
Furthermore, Katz and Leichtman note that Connes' formalism is rather non-constructive in nature, going as far as requiring the \emph{Continuum Hypothesis} in places (See \cite{gaanwekatten}*{Remark 5.1} for details), a statement known to be not provable or refutable in the usual foundation of mathematics \emph{Zermelo-Fraenkel set theory with the axiom of choice}.  

\medskip

Now, we should point out that Connes has previously espoused a platonist philosophy of mathematics (See e.g.\ \cite{caco}), i.e.\ the motivations for his criticism of Nonstandard Analysis do not seem to stem from a constructivist view of mathematics.  As is clear from the above quote from \cite{conman}*{p.\ 26}, Connes rather has pragmatic motivations in mind, namely that (fundamentally) non-constructive mathematics is useless for physics.  Indeed, a major aspect of physics is the testing of hypotheses against experimental data, nowadays done mostly on computers.  But how can this testing be done if the mathematical formalism at hand is fundamentally non-constructive, as Connes claims Nonstandard Analysis to be due to the presence of infinitesimals?

\medskip

In conclusion, Connes' critique of Nonstandard Analysis is similar to Bishop's, but with a different motivation (applicability to physics) and interpretation (no indication of a computational model is given).  

\subsection{Conclusion: The Bishop-Connes critique}
We have studied the critique of Nonstandard Analysis by Bishop and Connes.   We observed that Connes deems Nonstandard Analysis devoid of computational content due to the latter being based on ideal objects (infinitesimals in particular) at a fundamental level.  This makes Nonstandard Analysis unsuitable for physics, as literally claimed by Connes (See Section \ref{koko}), but no real foundational claims are made.  By contrast, we argued that Bishop's strong rejection of Nonstandard Analysis stems from his foundational beliefs:  In Bishop's version of constructivism, all objects are ultimately based on algorithms, while Nonstandard Analysis is fundamentally based on ideal objects (the most famous example being infinitesimals) with no (obvious) algorithmic content whatsoever.  Thus, we arrive at the Bishop-Connes critique:  
\begin{center}
\emph{The presence of ideal objects \(in particular infinitesimals\) in Nonstandard Analysis yields the absence of computational content.}
\end{center}
which stems from very different observations and beliefs, and in which `computational content' has different interpretations for Connes and Bishop. 

\medskip

Finally, we have identified the \emph{Bishop-Connes critique} as the main objection to Nonstandard Analysis by Bishop and Connes.  
We now point out an alternative view put forward by the second referee: 
\begin{quote}
While it is true that many people believe the Bishop-Connes critique (quite possibly including Bishop and Connes), this critique is probably more accurately viewed as one leg supporting their real critique of Nonstandard Analysis.  Taking it down does weaken the position (and especially, I suspect, its persuasiveness to those not already persuaded), but it is unlikely that Bishop,
Connes, or others strongly critical of the meaningfulness of nonstandard analysis would be moved by this article.
\end{quote}
Though beyond the scope of this paper,  we invite and welcome discussion on this topic, especially what other legs the Bishop-Connes critique rests on.  

\section{Reverse formalism}\label{RF}
\subsection{Introduction}\label{hintro}
As established in the previous sections, the Bishop-Connes critique of Nonstandard Analysis is as follows:
\begin{center}
\emph{The presence of ideal objects \(in particular infinitesimals\) in Nonstandard Analysis yields the absence of computational content.}
\end{center}
 This section is dedicated to establishing the opposite claim, namely 
 \begin{center}
\emph{The presence of ideal objects \(in particular infinitesimals\) in Nonstandard Analysis yields the \emph{ubiquitous presence} of computational content.}
\end{center}
In particular, we shall observe that infinitesimals provide an \emph{elegant shorthand} for expressing computational content. 
To this end, we shall exhibit a direct translation between (proofs of) theorems of Nonstandard Analysis and (proofs of) theorems rich in computational content, similar to the `reversals' from {Reverse Mathematics} discussed in Section \ref{RM}.  
The latter program also plays an important role in gauging the scope of this translation, in particular the `Big Five' classification, in Section \ref{scope}.  

\medskip

We label the aforementioned translation as a case of `reverse formalism', as it bestows Bishop-Connes style meaning (i.e.\ computational content) onto Nonstandard Analysis, and is similar to the reversals in Reverse Mathematics.  
We shall start with a basic example of reverse formalism stemming from probability theory in Section \ref{PTT}.  We further study two examples from analysis in Section \ref{CTT}, and consider a full-fledged theorem in Section \ref{HTT}.  As hinted at above, our notion of reverse formalism involves a (algorithmic) translation of proofs.  We discuss the scope of this translation in Section \ref{scope}, while possible criticism of this approach is discussed in Section \ref{promi}.

\subsection{A basic example from probability theory}\label{PTT}
We study our first example of reverse formalism:  a basic `computational' result from probability theory will be obtained from a qualitative result in Nonstandard Analysis.  As it happens, the latter has been used to model qualitative 
phenomena (See Section \ref{dva} for a considerable but non-exhaustive list), i.e.\ the potential applications are much richer than the following example.     
\begin{exa}\label{heathern}\rm
Suppose $P$ is a fixed probability measure and suppose $E_{1}, E_{2}$ are fixed collections of events expressible by an internal formula of $\P$.  
We would like to express the observation that if an event $A\in E_{1}$ is highly unlikely, then any event $B\in E_{2}$ is also highly unlikely.  This expression can be formalised as:
\be\label{imme}
(\forall^{\st} A\in E_{1})(\forall^{\st}B\in E_{2})\big(P(A)\approx 0 \di P(B)\approx 0    \big),  
\ee  
where we assume that $A, B$ are \emph{standard} as the latter predicate can also be interpreted as `observable' (See \cite{iamrobert}*{\S3.5.15} and \cite{goldie}*{\S5.1}).
The following does not change much if we drop the `\st' in \eqref{imme} for $A, B$:  The only change is that the extracted terms $s, t$ below do not depend on these variables anymore.    

\medskip

Now suppose \eqref{imme} is provable in $\P$; we use Theorem \ref{TERM} to obtain computational information from the former.  
Firstly, replace `$\approx$' by its definition:
\be\label{imme2}\textstyle
(\forall^{\st} A\in E_{1},B\in E_{2})\big((\forall^{\st}N\in \N)(P(A)<\frac{1}{N}) \di (\forall^{\st}k) (|P(B)|<\frac{1}{k})    \big).
\ee  
Secondly, bringing outside all standard quantifiers, we obtain:
\be\label{imme3}\textstyle
(\forall^{\st} A\in E_{1},B\in E_{2},k)(\exists^{\st}N\in \N) \big((P(A)<\frac{1}{N}) \di (|P(B)|<\frac{1}{k})    \big),  
\ee  
which has the right form to apply Theorem \ref{TERM} (i.e.\ to the statement `$\P\vdash \eqref{imme3}$').  Hence, we obtain a term $t$ such that $\textsf{E-PA}^{\omega}$ proves
\be\label{imme4}\textstyle
(\forall A\in E_{1}, B\in E_{2}, k)(\exists N\in t(A, B, k)) \big((P(A)<\frac{1}{N}) \di (|P(B)|<\frac{1}{k})    \big).
\ee  
Recalling that $t$ yields a finite list of terms, define $s(A, B, k)$ as the maximum of all $t(A, B, k)(i)$ for $i<|t(A, B, k)|$.  
We finally obtain that $\textsf{E-PA}^{\omega}$ proves
\be\label{imme5}\textstyle
(\forall A\in E_{1}, B\in E_{2}, k) \big((P(A)<\frac{1}{s(A, B, k)}) \di (|P(B)|<\frac{1}{k})    \big),
\ee  
where $s$ is a computer program as discussed in Section~\ref{fraki}.  
\end{exa}
In the previous example, we started from \eqref{imme} which expresses that if some event $A\in E_{1}$ is unlikely, then so is every event $B\in E_{2}$, and we obtained \eqref{imme5} which expresses that to guarantee that $B\in E_{2}$ has probability less than $\frac{1}{k}$, $A\in E_{1}$ should have probability less than $\frac{1}{s(A, B, k)}$.  
Thus, we also know `how unlikely' $A$ should be to guarantee that $B$ has less than $\frac{1}{k}$ probability  \emph{thanks to the statement \eqref{imme}}. 

\medskip

In contrast to the Bishop-Connes critique, statements of Nonstandard Analysis (also involving infinitesimals) like \eqref{imme} \emph{do} have some computational content, namely \eqref{imme5}.    
As noted at the beginning of this section, \eqref{imme} is just one example among many (See also Example \ref{bayes} below).  

\medskip

Next, we show that \eqref{imme5}, the `computational version of \eqref{imme}', is also `meta-equivalent' to \eqref{imme} as follows.  
\begin{thm}\label{lacky} ~
\begin{enumerate}
\item From a proof in $\P$ of \eqref{imme}, a term\footnote{As noted in Section \ref{fraki}, terms obtained from Theorem \ref{TERM} are indeed algorithms, and we use these two words interchangeably in the context of system $\P$.} $s$ can be extracted such that \eqref{imme5} is provable in $\textsf{\textup{E-PA}}^{\omega}$.  
\item Let $s$ be a term.  From a proof in $\textsf{\textup{E-PA}}^{\omega}$ of \eqref{imme5}, a proof in $\P$ of \eqref{imme} can be extracted.  
\item Let $s$ be a term.  The system $\H$ proves that $\eqref{imme5}\di \eqref{imme}$.  
\end{enumerate}
\end{thm}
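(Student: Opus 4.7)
The plan is to follow Example \ref{heathern} for (1) and then reverse its structure to obtain (2) and (3). Item (1) is essentially already done in the example: starting from \eqref{imme}, I unfold `$\approx$' to reach \eqref{imme2}, use classical logic to bring all standard quantifiers up front and obtain the normal form \eqref{imme3}, apply Theorem \ref{TERM} to the internal matrix of \eqref{imme3} to extract a term $t$ witnessing \eqref{imme4}, and finally define $s(A,B,k)$ as the maximum of the finite list $t(A,B,k)$, yielding \eqref{imme5} in $\textsf{E-PA}^{\omega}$.

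For (3), I would argue directly inside $\H$. Assume \eqref{imme5} and fix standard $A\in E_{1}$, $B\in E_{2}$, together with the hypothesis $P(A)\approx 0$, i.e., $(\forall^{\st} N\in\N)(P(A)<\tfrac{1}{N})$. To establish $P(B)\approx 0$, fix standard $k$. The basic axioms listed at the end of Section \ref{graf1} (which, by Section \ref{graf2}, belong to $\H$ as well) ensure that the term $s$ applied to standard inputs $A,B,k$ yields a standard natural number $s(A,B,k)$. Our standing hypothesis then provides $P(A)<\tfrac{1}{s(A,B,k)}$, so instantiating \eqref{imme5} at $A,B,k$ and applying modus ponens gives $|P(B)|<\tfrac{1}{k}$, as required. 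This derivation uses only $\di$-introduction/elimination, $\forall$-instantiation, and the standardness-preservation axioms, all of which are valid intuitionistically, so the argument goes through in $\H$.

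For (2), observe that \eqref{imme5} is an internal sentence, and $\P$ extends $\textsf{E-PA}^{\omega}$, so the assumed proof of \eqref{imme5} in $\textsf{E-PA}^{\omega}$ is already a proof in $\P$. Then the argument of (3) transposes verbatim into $\P$ (using the classical versions of the basic axioms from Definition \ref{debs}) to yield a proof of \eqref{imme} in $\P$. Equivalently, (2) is an immediate corollary of (3) once one notes that $\P$ contains the intuitionistic fragment sufficient for the derivation just given.

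The main obstacle I expect is verifying that the passage from \eqref{imme5} to \eqref{imme} in (3) really requires no classical reasoning; once one sees that the only ingredients are modus ponens, universal instantiation, and the preservation of standardness under terms, this concern dissolves. A secondary point worth checking is that the term $s$ constructed in (1), namely the maximum over a finite list produced by Theorem \ref{TERM}, is definable in both $\textsf{E-PA}^{\omega}$ and $\textsf{E-HA}^{\omega}$; this is automatic since the maximum operation on finite lists is a standard primitive recursive functional of G\"odel's system $T$.
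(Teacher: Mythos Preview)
Your proposal is correct and follows essentially the same approach as the paper: item (1) is deferred to Example~\ref{heathern}, and items (2) and (3) both hinge on the basic axioms of Definition~\ref{debs} (closed terms are standard; standard functionals applied to standard inputs yield standard outputs), so that $s(A,B,k)$ is standard for standard $A,B,k$ and \eqref{imme5} instantiates to give \eqref{imme3}, hence \eqref{imme}. The only cosmetic difference is that the paper establishes (2) first and then remarks that (3) follows by the same argument in $\H$, whereas you do (3) first and deduce (2); the content is the same.
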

\begin{proof}
The first item has been established in Example \ref{heathern}.  For the second item, since $\P$ is an extension of $\textsf{E-PA}^{\omega}$, a proof of \eqref{imme5} in the latter is also valid in the former.   
Furthermore, as discussed in Section \ref{graf1}, the axioms of $\P$ imply (i) that every term in the language of \textsf{E-PA}$^{\omega}$ is standard and (ii) that a standard object applied to standard input yields standard output (See Definition \ref{debs}).  
In particular, for the term $s$ in \eqref{imme5}, we have that $s(k)$ is standard for standard $k, A, B$.  
Hence, `$\textsf{E-PA}^{\omega}\vdash \eqref{imme5}$' yields a proof in $\P$ of \eqref{imme3} (taking $N=s(k, A, B)$), and the latter formula immediately implies \eqref{imme}.   
For the third item, it suffices to observe that $\H$ also includes the basic axioms from Definition \ref{debs} and repeat the previous part of the proof.    
\end{proof}
The previous theorem is our first example of \emph{reverse formalism}:  We start from a statement \eqref{imme} in Nonstandard Analysis and obtain its `computational version' \eqref{imme5} thanks to Theorem \ref{TERM}.  Perhaps surprisingly, \eqref{imme5} in turn implies \eqref{imme} in the sense of the second and third item of Theorem \ref{lacky}.  In this way, we may call \eqref{imme} and \eqref{imme5} `meta-equivalent' similar to the reversals in Reverse Mathematics, as the former statements are `equivalent in the meta-theory'.  Furthermore, while these statements are equivalent in the aforementioned sense, \eqref{imme} is arguably a simpler statement than \eqref{imme5}, lending credence to our claim that \emph{infinitesimals provide an elegant shorthand for expressing computational content}. 

\medskip
     
Moreover, in the proof of the second item of Theorem \ref{lacky}, the term $s$ as in \eqref{imme5} is standard inside $\P$ and thus gives rise to \eqref{imme3}.  The latter in turn implies \eqref{imme}, i.e.\ computational information (in the form of $s$) is converted to \emph{external} predicates (`$\approx$' in \eqref{imme} and `$\st$' in \eqref{imme3}).  Similarly, in the proof of the first item, applying Theorem \ref{TERM} to \eqref{imme3} (which follows from \eqref{imme}), a term $t$ is obtained computing an upper bound on $N$ in terms of $k, A, B$.  In particular, the latter four variables occur as \emph{standard quantifiers} in \eqref{imme3} while the rest of the formula (which is internal) is ignored, i.e.\ no computational content is provided with regard to e.g.\ `$P(B)<\frac{1}{k}$'.           
In other words, external predicates (`$\approx$' in \eqref{imme} and `$\st$' in \eqref{imme3}) are translated to computational information (in the form of $t$ and $s$).  
In conclusion, external predicates in \eqref{imme} give rise to the computational content in \eqref{imme5} \emph{and vice versa}.  This observation nicely supports our claim from the introduction that 
 \begin{center}
\emph{The presence of ideal objects \(in particular infinitesimals\) in Nonstandard Analysis yields the \emph{presence} of computational content.}
\end{center}
Actually, the results in Theorem \ref{lacky} and previous discussion suggest that the presence of nonstandard objects is somehow \emph{equivalent} (in the meta-theory) to the presence of computational content.
We will discuss the \emph{ubiquity} (as claimed in Section \ref{hintro}) of said computational content in Section \ref{scope}.  

\medskip

Next, suggested by the third item of Theorem \ref{lacky}, there is also constructive content (in the sense of constructive mathematics) to be found in Theorem \ref{lacky}:
Most importantly, we observe that the step from the `computational version' \eqref{imme5} to the nonstandard version \eqref{imme} can be done \emph{in the constructive system $\H$} (and actually only the basic axioms from Definition \ref{debs} are needed).  
In turn, the translation from \cite{brie} used in Theorem~\ref{TERM} and the first item of Theorem \ref{lacky} can be formalised in any reasonable\footnote{Here, a `reasonable' system is one which can prove the usual properties of finite lists, for which the presence of the exponential function suffices.  In particular, a subsystem of \emph{primitive recursive arithmetic}, where the latter is claimed to correspond to Hilbert's finitist mathematics (\cite{tait1}), suffices.} system of constructive mathematics.  In fact, the formalisation of the results in \cite{brie} 
in the proof assistant Agda (based on Martin-L\"of's constructive type theory \cite{loefafsteken}) is underway in \cite{EXCESS}.  

\medskip

Finally, we list one more example of a qualitative statement from Nonstandard Analysis which the reader may study in the same way as Example \ref{heathern}.
\begin{exa}\label{bayes}\rm
An intuitive statement about conditional probabilities is:
\[
\textup{If $P(A)$ is low and $P(B)$ is not low, then $P(A|B)$ is low,} 
\]
and the latter may be formalised as follows in Nonstandard Analysis:
\be\label{EXA44}
(\forall A, B)\big((P(A)\approx 0 \wedge P(B)\not\approx 0)\di P(A|B)\approx 0\big).
\ee
Assuming\footnote{It is easy to prove \eqref{EXA44} in $\P$ using Bayes' theorem $P(A|B)=\frac{ P(B|A)P(A)}{P(B)}$ for $P(B)\ne 0$, and basic properties of infinitesimals.} that $\P$ proves \eqref{EXA44}, Theorem \ref{TERM} yields a term $s$ such that
\[\textstyle
(\forall k,k' \in \N)(\forall A, B)\big((P(A)<s(k, k') \wedge P(B)>\frac{1}{k})\di P(A|B)<\frac{1}{k'}\big),
\]
and a version of Theorem \ref{heathern} can be obtained easily.  Note that the previous formula expresses \emph{how unlikely} $A$ and how likely $B$ have to be to guarantee that $P(A|B)$ is unlikely in the sense of being less than $\frac{1}{k'}$.     
\end{exa}
\subsection{Basic examples involving continuity and convergence}\label{CTT}
We study our second example of reverse formalism:  basic `constructive' definitions from analysis will be obtained from the associated definitions in Nonstandard Analysis.
We study uniform continuity and convergence inspired by Example \ref{krel}.  
These notions are defined as follows in Nonstandard Analysis. 
\bdefi\label{Kont}
A function $f$ is \emph{nonstandard uniformly continuous} on $[0,1]$ if
\be\label{soareyou4}
(\forall x, y\in [0,1])[x\approx y \di f(x)\approx f(y)].
\ee
A sequence $x_{(\cdot)}$ \emph{nonstandard converges} to $x$ if 
\be\label{convent}
(\forall N\in \N)\big(\neg\st(N) \di x_{N}\approx x\big).  
\ee
\edefi
The `constructive' definitions of uniform continuity and convergence (as employed by Bishop) are just the `usual' epsilon-delta definitions with moduli.   
\bdefi\label{CKont}
$f$ is \emph{uniformly continuous} on $[0,1]$ with modulus $s$ if
\be\label{soareyou44}\textstyle
(\forall  k\in \N){(\forall x,y\in [0,1])\big( |x- y|<\frac{1}{s(k)} \di|f(x)- f(y)|<\frac{1}{k}\big)}.
\ee
A sequence $x_{(\cdot)}$ \emph{converges} to $x$ with modulus $N$ if 
\be\label{convent4}\textstyle
(\forall k,n\in \N)( n\geq N(k) \di |x_{n}-x|<\frac{1}{k}  )
\ee
\edefi
Similar to Theorem \ref{lacky} and Example \ref{krel}, we have the following theorem.  
\begin{thm}\label{lacky2} ~
\begin{enumerate}
\item From a proof in $\P$ of \eqref{soareyou4} \(resp.\ \eqref{convent}\), a term\footnote{As noted in Section \ref{fraki}, terms obtained from Theorem \ref{TERM} are indeed algorithms, and we use these two words interchangeably.} $s$ \(resp.\ $N$\) can be extracted such that \eqref{soareyou44} \(resp.\ \eqref{convent4}\) is provable in $\textsf{\textup{E-PA}}^{\omega}$.  
\item Let $s,N$ be terms.  From a proof in $\textsf{\textup{E-PA}}^{\omega}$ of \eqref{soareyou44} \(resp.\ \eqref{convent4}\), a proof in $\P$ of \eqref{soareyou4} \(resp.\ \eqref{convent}\) can be extracted.  
\item Let $s,N$ be terms.  The system $\H$ proves $\eqref{soareyou44}\di \eqref{soareyou4}$ and $\eqref{convent4}\di \eqref{convent}$.  
\end{enumerate}
\end{thm}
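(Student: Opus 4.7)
The plan is to follow the template of Theorem~\ref{lacky} and Example~\ref{krel}, treating uniform continuity and convergence in parallel. For item (1), in each case I would manipulate the nonstandard definition inside $\P$ until all external quantifiers are in outermost position and attached to an internal matrix; Theorem~\ref{TERM} then yields a term whose finite list of outputs is collapsed to a single modulus by taking the maximum. For items (2) and (3), the strategy is to invoke the basic axioms of Definition~\ref{debs} to lift the extracted moduli back into the nonstandard formulation.

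For uniform continuity, I would first resolve `$\approx$' in \eqref{soareyou4} and use the classical equivalence $((\forall^{\st}N)A)\to B\equiv(\exists^{\st}N)(A\to B)$ to reach
\[
(\forall^{\st}k)(\forall x,y\in[0,1])(\exists^{\st}N)\big[|x-y|<\tfrac{1}{N}\to|f(x)-f(y)|<\tfrac{1}{k}\big].
\]
A single application of Idealisation in the contrapositive shape \eqref{criv} yields a standard finite set of candidate $N$'s; replacing them by their maximum (using monotonicity of the implication in $N$) produces $(\forall^{\st}k)(\exists^{\st}N_0)(\forall x,y)[\ldots]$, to which Theorem~\ref{TERM} applies, extracting the desired modulus $s$ in \eqref{soareyou44}. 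For convergence, the extra preliminary step is to re-express \eqref{convent} using only standard quantifiers. Fixing standard $k$ and assuming for contradiction $(\forall^{\st}N_0)(\exists n\ge N_0)(|x_n-x|\ge 1/k)$, an application of Idealisation to this standard family (verifying its finite premise by taking the maximum of a standard finite subset of $\N$) produces a single $n$ exceeding every standard $N_0$, hence a nonstandard $n$ with $|x_n-x|\ge 1/k$, directly contradicting \eqref{convent}. This delivers $(\forall^{\st}k)(\exists^{\st}N_0)(\forall n\ge N_0)(|x_n-x|<1/k)$ inside $\P$, after which Theorem~\ref{TERM} produces the term $N$ witnessing \eqref{convent4}.

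For items (2) and (3), any proof of \eqref{soareyou44} or \eqref{convent4} in $\textsf{E-PA}^\omega$ is \emph{a fortiori} a proof in $\P$ (respectively, in $\H$), and the basic axioms of Definition~\ref{debs}, present in both nonstandard systems, ensure that the terms $s$ and $N$ are standard and that $s(k)$, $N(k)$ are standard whenever $k$ is. To derive \eqref{soareyou4}, fix $x\approx y$ and standard $k$; since $s(k)$ is standard, $x\approx y$ supplies $|x-y|<1/s(k)$, and \eqref{soareyou44} then gives $|f(x)-f(y)|<1/k$. For \eqref{convent}, fix a nonstandard $n$ and standard $k$; the standardness of $N(k)$ forces $n>N(k)$, and \eqref{convent4} gives $|x_n-x|<1/k$. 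Both arguments use only the axioms of Definition~\ref{debs} and intuitionistically valid implications, settling item (3) by the same proofs.

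The step I expect to be the main obstacle is the convergence case in item (1): the occurrence of the external predicate $\neg\st(\cdot)$ in \eqref{convent} means that the formula is not in a shape amenable to Theorem~\ref{TERM}, and must first be converted into a positive `epsilon–$N$' form by the Idealisation-plus-contradiction argument above. The uniform continuity case, by contrast, is essentially a re-run of Example~\ref{krel} with the outer $(\forall^{\st}x)$ replaced by $(\forall x)$, so that the extracted modulus depends only on $k$ and is automatically uniform in $x,y$.
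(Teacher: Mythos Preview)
Your proposal is correct and, for uniform continuity and for items (2)--(3), essentially identical to the paper's proof. The only genuine difference is in the convergence case of item (1). You reach the normal form $(\forall^{\st}k)(\exists^{\st}N_0)(\forall n\ge N_0)(|x_n-x|<1/k)$ by a contradiction argument: negate, apply \emph{Idealisation} to produce a nonstandard $n$ violating \eqref{convent}, and conclude. The paper instead proceeds purely syntactically: it rewrites $\neg\st(N)$ as $(\forall^{\st}m)(N\geq m)$ (valid since a standard $N$ would fail $N\geq N+1$), obtaining $(\forall N)\big[(\forall^{\st}m)(N\geq m)\to(\forall^{\st}k)(|x_N-x|<\tfrac1k)\big]$, then pulls the standard quantifiers outward to $(\forall^{\st}k)(\forall N)(\exists^{\st}m)[N\geq m\to|x_N-x|<\tfrac1k]$ and applies \emph{Idealisation} in the contrapositive form \eqref{criv} exactly as in the continuity case. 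Your route is perhaps more intuitive, while the paper's has the virtue of treating continuity and convergence uniformly (both become a single quantifier-pull followed by one use of \textsf{I}) and of avoiding an explicit reductio, which keeps the argument closer to the shape needed for the constructive variant in $\H$.
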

\begin{proof}
For the first item, the case of uniform continuity is proved in the same way as in Example \ref{krel}.  For the case of convergence, \eqref{convent} implies by definition:
\be\label{convent2}\textstyle
(\forall N\in \N)\big((\forall^{\st}m)(N\geq m) \di (\forall^{\st} k)( |x_{N}- x|<\frac{1}{k}\big).  
\ee
Bringing outside the standard quantifiers as far as possible, we obtain
\be\label{convent3}\textstyle
(\forall^{\st} k)\underline{(\forall N\in \N)(\exists^{\st}m)\big(N\geq m \di  |x_{N}- x|<\frac{1}{k}\big)}.  
\ee
Applying idealisation \textsf{I} to the underlined formula, we obtain a finite list $z$ of natural numbers such that $(\forall N\in \N)(\exists m\in z)$ in \eqref{convent3}.  With $m_{0}$ equal to the maximum of all elements in $z$, \eqref{convent3} becomes
\be\label{convent33}\textstyle
(\forall^{\st} k)(\exists^{\st}m_{0}){(\forall N\in \N)\big(N\geq m_{0} \di  |x_{N}- x|<\frac{1}{k}\big)}.  
\ee
Applying Theorem \ref{TERM} to `$\P\vdash \eqref{convent33}$', we obtain a term $t$ such that   
\be\label{convent333}\textstyle
(\forall k)(\exists m_{0}\in t(k)){(\forall N\in \N)\big(N\geq m_{0} \di  |x_{N}- x|<\frac{1}{k}\big)}
\ee
is provable in $\textsf{E-PA}^{\omega}$.  Define $N(k)$ as the maximum natural number in the list $t(k)$ and note that $N$ is as required by the theorem.    

\medskip

For the second item, a term $N$ is standard for standard input in $\P$, hence a proof of \eqref{convent4} in \textsf{E-PA}$^\omega$ yields a proof of \eqref{convent33} in $\P$.  
The latter immediately implies \eqref{convent}, as all nonstandard numbers are bigger than all standard ones.  The second item for the case of uniform continuity is similar.    The third item is proved in exactly the same way as the second one.  \qed
\end{proof}
The previous theorem is our second example of \emph{reverse formalism}, and the same observations as in the previous section can be made:  For the nonstandard statements \eqref{soareyou4} and \eqref{convent}, there are `meta-equivalent' computational versions \eqref{soareyou44} and \eqref{convent4}.  The nonstandard versions are much shorter, and their use of infinitesimals as in `$\approx$' provides an elegant shorthand for the existence of moduli.  In particular, the latter give rise to the predicates `$\approx$' in the nonstandard versions, \emph{and vice versa} by (the proof of) Theorem \ref{lacky2}.  As discussed in Section \ref{scope}, similar results exist for other basic notions from analysis.    

\medskip

The modulus functions from Theorem \ref{lacky2} are important for the following reason: moduli for (uniform) continuity and convergence are \emph{indispensable\footnote{Bishop uses the exact words `indispensable part' with regard to moduli in \cite{bish1}*{p.\ 34}.} parts} of the constructive definitions of these notions (See \cite{bish1}*{p.\ 34 and  p.\ 26}) in Bishop's constructive mathematics.  
Thus, in direct contradiction with Bishop's critique of Nonstandard Analysis, the nonstandard definitions of continuity and convergence have computational content, namely the 
same `indispensable' constructive content required by Bishop.  

\medskip

In conclusion, we have obtained a second example of reverse formalism which seems to generalise nicely to other basic notions from analysis, anticipating the huge scope of Theorem \ref{TERM} to be discussed in Section \ref{scope}.  
Our example even included constructive content in the form of Bishop's `indispensable' modulus functions.  

\subsection{A basic example from high-school mathematics}\label{HTT}
We discuss an example of reverse formalism based on an actual theorem of (in some parts of the world) high-school mathematics.  
\subsubsection{Preliminaries}
The most basic notions of analysis, going back to high-school mathematics, include \emph{continuity} and \emph{Riemann integration}.  The former was introduced in the previous sections, while the latter is defined as follows.     
\bdefi[Riemann integration]\label{kunko}~
\begin{enumerate}
\item A \emph{partition} of $[0,1]$ is any sequence $\pi=(0, t_{0}, x_{1},t_{1},  \dots,x_{M-1}, t_{M-1}, 1)$.  We write `$\pi \in P([0,1]) $' to denote that $\pi$ is such a partition.
\item For $\pi\in P([0,1])$, $\|\pi\|$ is the \emph{mesh}, i.e.\ the largest distance between two adjacent partition points $x_{i}$ and $x_{i+1}$. 
\item For $\pi\in P([0,1])$ and $f:\R\di \R$, the real $S_{\pi}(f):=\sum_{i=0}^{M-1}f(t_{i}) (x_{i}-x_{i+1}) $ is the \emph{Riemann sum} of $f$ and $\pi$.  
\item A function $f$ is \emph{nonstandard integrable} on $[0,1]$ if
\be\label{soareyou5}
(\forall \pi, \pi' \in P([0,1]))\big[\|\pi\|,\| \pi'\|\approx 0  \di S_{\pi}(f)\approx S_{\pi'}(f)  \big].
\ee
\end{enumerate}
\edefi
Bishop proves the following theorem regarding (uniform) continuity and Riemann integration inside BISH in \cite{bish1}*{Theorem 9}.
\begin{thm}[$\RIE_{\ef}(t)$]\label{kliit}
If a function is uniformly continuous on the unit interval, then it is Riemann integrable there, i.e.\ for all $f: \R\di \R$ and $g:\N\di \N$, we have
\begin{align}\textstyle
(\forall \textstyle x, y &\in [0,1],k\textstyle)(|x-y|<\frac{1}{g(k)} \di |f(x)-f(y)|\leq\frac{1}{k})\label{EST}\\
&\textstyle\di  (\forall n)(\forall \pi, \pi' \in P([0,1]))\big(\|\pi\|,\| \pi'\|< \frac{1}{t(g,n)}  \di |S_{\pi}(f)- S_{\pi'}(f)|\leq \frac{1}{n} \big) ,\notag
\end{align}
\end{thm}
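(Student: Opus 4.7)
The plan is to follow the reverse-formalism template already exhibited for continuity and convergence in Theorem \ref{lacky2}: prove a purely nonstandard version of the implication inside $\P$, and then invoke Theorem \ref{TERM} to strip off an explicit modulus $t(g,n)$.

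First I would establish in $\P$ that \emph{every nonstandard uniformly continuous function on $[0,1]$ is nonstandard Riemann integrable in the sense of \eqref{soareyou5}}. Fix $\pi,\pi'\in P([0,1])$ with $\|\pi\|,\|\pi'\|\approx 0$, and let $\pi^{*}$ be a common refinement of $\pi$ and $\pi'$ (with some canonical internal choice of tags). Grouping each subinterval of $\pi^{*}$ with the parent interval of $\pi$ that contains it, every pair of tags $(t_{i},s_{j})$ thus compared lies within a single interval of $\pi$ of width $\leq\|\pi\|\approx 0$, so $|t_{i}-s_{j}|\approx 0$ and hence $|f(t_{i})-f(s_{j})|\approx 0$ by \eqref{soareyou4}. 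For each standard $k$, the bound is at most $\tfrac{1}{k}$ uniformly in $i,j$, and telescoping gives $|S_{\pi}(f)-S_{\pi^{*}}(f)|\leq\tfrac{1}{k}$; as $k$ was arbitrary standard, $S_{\pi}(f)\approx S_{\pi^{*}}(f)$, and symmetrically $S_{\pi'}(f)\approx S_{\pi^{*}}(f)$, whence \eqref{soareyou5}.

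Next I would use the easy half of Theorem \ref{lacky2}: a standard modulus $g$ satisfying \eqref{EST} already implies nonstandard uniform continuity in $\P$ (since $x\approx y$ forces $|x-y|<\tfrac{1}{g(k)}$ for every standard $k$). Combined with the previous step, $\P$ proves under \eqref{EST} that
\[\textstyle
(\forall^{\st} n\in\N)(\exists^{\st} m\in\N)(\forall \pi,\pi'\in P([0,1]))\bigl(\|\pi\|,\|\pi'\|<\tfrac{1}{m}\to|S_{\pi}(f)-S_{\pi'}(f)|\leq\tfrac{1}{n}\bigr),
\]
where the passage from `$\approx$' to this $(\forall^{\st} n)(\exists^{\st} m)$ shape proceeds exactly as in the convergence case of Theorem \ref{lacky2}: pull the standard quantifiers outside, apply idealisation \textsf{I} to the resulting internal matrix, and take the maximum of the returned finite list.

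Finally, applying Theorem \ref{TERM} to the displayed formula yields a term $t$ for which $\textsf{E-PA}^{\omega}$ proves that $m=t(g,n)$ does the job, giving $\RIE_{\ef}(t)$ as stated. The main obstacle I anticipate is bookkeeping rather than mathematics: one must ensure that the modulus $g$ is carried along as a standard parameter (via the basic axioms of Definition \ref{debs}) so that the extracted $t$ genuinely depends on $g$ in the intended way, and that the common-refinement construction and choice of tags used in the core estimate are presented by an explicit internal operation, so the whole argument stays within the syntactic scope of Theorem \ref{TERM}.
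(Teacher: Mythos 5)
Your argument is correct, but it is a genuinely different route from what the paper does: the paper does not prove Theorem~\ref{kliit} at all, instead presenting it as a citation of Bishop's Theorem~9 in \cite{bish1}, where Bishop gives a direct constructive estimate inside BISH (comparing both Riemann sums against a common refinement and reading off $t(g,n)$ explicitly from the modulus $g$). Your route---establish $\RIE_{\ns}$ in $\P$, coerce it into a normal form by Idealisation, and invoke the term extraction of Theorem~\ref{TERM}---is precisely what the paper carries out one theorem later, as Theorem~\ref{kater}, with a pointer to \cite{loeb1}*{Prop.~12.3} for the nonstandard proof and to \cite{sambon}*{\S 3.1} for the extraction details. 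The two approaches buy different things: Bishop's direct proof is shorter and yields the modulus by inspection, while your nonstandard proof illustrates the paper's central thesis that a short, quantifier-poor nonstandard argument already encodes the computational content to be extracted mechanically. Your concluding worries (internalising the common-refinement operation and keeping $g$ standard so the extracted $t$ depends on it as intended) are exactly the technical work that the paper, too, delegates to the cited references rather than carrying out in text.
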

Here, $t$ is a computer program or `algorithm' in the sense of Bishop's Constructive Analysis.  Recall that a `modulus of uniform continuity $g$' as in \eqref{EST} is part and parcel 
of constructive continuity (See \cite{bish1}*{Def.\ 9}).  Thus, Bishop's theorem provides an algorithm (namely $t$) to compute a `modulus of Riemann integration' $\frac{1}{t(g, \cdot)}$ from a modulus of uniform continuity $g$.  

\medskip

Now consider the nonstandard version of the theorem that uniform continuity implies Riemann integration on the unit interval.  
\begin{thm}[$\RIE_{\ns}$]\label{kleet}
Every function $f:\R\di \R$ is nonstandard integrable on $[0,1]$ if it is nonstandard uniformly continuous there.  
\end{thm}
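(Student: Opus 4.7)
My plan is to reduce the nonstandard statement \eqref{soareyou5} to a familiar $\varepsilon$-$\delta$ comparison of Riemann sums, by first extracting a standard modulus of continuity from the nonstandard uniform continuity hypothesis and then bounding $|S_\pi(f) - S_{\pi''}(f)|$ and $|S_{\pi'}(f) - S_{\pi''}(f)|$ via a common refinement $\pi''$ of $\pi$ and $\pi'$. Fix partitions $\pi, \pi' \in P([0,1])$ with $\|\pi\|, \|\pi'\| \approx 0$; the goal is to show $|S_\pi(f) - S_{\pi'}(f)| \leq 2/k$ for every standard $k \in \N$, which by definition of $\approx$ delivers $S_\pi(f) \approx S_{\pi'}(f)$.

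First I would repeat the Idealisation manipulation of Example \ref{leffe}, applied this time to nonstandard uniform continuity (which, crucially, carries no ``$\forall^{\st}x$'' out front). Unfolding $\approx$ and rearranging the external quantifiers produces
\[
\textstyle (\forall^{\st} k)(\forall x, y \in [0,1])(\exists^{\st} N)\bigl(|x-y| < \tfrac{1}{N} \di |f(x) - f(y)| < \tfrac{1}{k}\bigr).
\]
Applying \textsf{I} to the inner formula and letting $N_0$ be the maximum of the resulting standard finite witness set yields, for each standard $k$, a standard $N_0 \in \N$ with $(\forall x, y \in [0,1])(|x-y| < 1/N_0 \di |f(x) - f(y)| < 1/k)$, i.e.\ a standard modulus of uniform continuity at $k$.

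Second, fix a standard $k$ and the associated standard $N_0$. Because $N_0$ is standard, both $\|\pi\|$ and $\|\pi'\|$ are $< 1/N_0$, and the common refinement $\pi''$ of $\pi$ and $\pi'$ has mesh $< 1/N_0$ as well. For each $\pi$-subinterval $[x_i, x_{i+1}]$ with tag $t_i$, refine it as $x_i = y_0 < \dots < y_m = x_{i+1}$ with $\pi''$-tags $s_j$; since $|t_i - s_j| \leq \|\pi\| < 1/N_0$, the modulus yields $|f(t_i) - f(s_j)| < 1/k$, and the block's contribution to $S_\pi(f) - S_{\pi''}(f)$ is bounded in absolute value by $(1/k)(x_{i+1}-x_i)$. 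Summing over all blocks gives $|S_\pi(f) - S_{\pi''}(f)| \leq 1/k$; the same argument applied to $\pi'$ yields $|S_{\pi'}(f) - S_{\pi''}(f)| \leq 1/k$, and the triangle inequality closes the argument.

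The only delicate point is the legitimacy of applying Idealisation \textsf{I} in $\P$ to a formula carrying the possibly nonstandard parameter $f$: the $\P$-version \eqref{wellhung} of Idealisation explicitly allows arbitrary parameters, and the inner kernel is internal once $\approx$ has been unfolded, so the step is sound. The common-refinement construction and the telescoping estimate are finite combinatorial manipulations and require no further subtlety. The extracted bound $N_0 = N_0(k)$ is precisely the sort of modulus that a subsequent application of Theorem \ref{TERM}, in the spirit of Theorem \ref{lacky2}, will turn into a computable algorithm converting a modulus of uniform continuity into one of Riemann integration; that is, this proof sketch is arranged so as to slot directly into the reverse-formalism framework and reproduce Bishop's Theorem \ref{kliit}.
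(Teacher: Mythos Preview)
Your argument is correct. The paper itself does not spell out a proof of $\RIE_{\ns}$: it simply remarks (in the proof of Theorem~\ref{kater}) that the standard textbook argument, namely \cite{loeb1}*{Prop.~12.3}, goes through for nonstandard uniformly continuous functions once \emph{Transfer} is avoided. Your common-refinement estimate is exactly that argument, and your use of \textsf{I} to extract a standard $N_0$ from the hypothesis is precisely how one removes the appeal to \emph{Transfer} in $\P$; this mirrors the treatment of continuity in Example~\ref{leffe} and Theorem~\ref{lacky2}, and you are right that the possibly nonstandard parameter $f$ is harmless in \eqref{wellhung}.

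A small stylistic remark: the more ``native'' nonstandard route, closer to how \cite{loeb1} would phrase it, is to skip the modulus extraction altogether and argue that the (hyperfinite) maximum of $|f(t_i)-f(s_j)|$ over all tag pairs lying in a common $\pi$-cell is realised at some particular pair $(t_{i_0},s_{j_0})$ with $t_{i_0}\approx s_{j_0}$, whence the maximum is infinitesimal by \eqref{soareyou4} and the total discrepancy is bounded by this maximum times the length of $[0,1]$. Both routes are valid in $\P$; yours has the advantage that the standard $N_0(k)$ is already visible, so the normal form needed for Theorem~\ref{TERM} and the term $t$ in $\RIE_{\ef}(t)$ fall out with no further work, which is exactly what the subsequent Theorems~\ref{kater} and~\ref{key} require.
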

Theorem \ref{kleet} is clearly part of Nonstandard Analysis, and even commits the `original sin' (according to Connes) of non-constructivity by mentioning infinitesimals\footnote{The predicate `$x\approx y$' is usually read as `the distance between $x$ and $y$ is infinitesimal'.}.  
Hence, $\RIE_{\ns}$ should be devoid of computational content according to the Biship-Connes critique.  
Nonetheless, we have the following theorem which was first proved in \cite{sambon}*{\S3.1}.   
\begin{thm}\label{kater}
From a proof of $\RIE_{\ns}$ in $\P$, an algorithm $t$ can be extracted such that $\RIE_{\ef}(t)$ is provable in $\textup{\textsf{E-PA}}^{\omega}$.   Each step in the proof of $\RIE_{\ns}$ is a step in the algorithm $t$.  The same holds for $\H$ and $\textsf{\textup{E-HA}}^{\omega}$.  
\end{thm}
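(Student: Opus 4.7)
The plan is to reformulate $\RIE_{\ns}$ into a statement of the canonical shape $(\forall^{\st}u)(\exists^{\st}v)\,\varphi(u,v)$ with $\varphi$ internal, and then invoke Theorem~\ref{TERM} to read off $t$. Working in $\P$ and assuming $\RIE_{\ns}$, I would fix a \emph{standard} modulus $g$ together with an \emph{arbitrary} $f$ satisfying the internal predicate
\[
\textup{ucm}(f,g)\equiv(\forall k,x,y\in[0,1])\bigl(|x-y|<\tfrac{1}{g(k)}\di|f(x)-f(y)|<\tfrac{1}{k}\bigr).
\]
Since the basic axioms of Definition~\ref{debs} force $g(k)$ to be standard whenever $k$ is, $x\approx y$ entails $|x-y|<\tfrac{1}{g(k)}$ and hence $|f(x)-f(y)|<\tfrac{1}{k}$ for every standard $k$, that is $f(x)\approx f(y)$. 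Thus $\textup{ucm}(f,g)$ implies nonstandard uniform continuity of $f$, and $\RIE_{\ns}$ delivers that $f$ is nonstandard Riemann integrable. Unfolding this and moving the quantifier `$(\forall^{\st}n)$' outwards exactly as in the proof of Theorem~\ref{lacky2} yields, for each standard $n$,
\[
(\forall\pi,\pi'\in P([0,1]))(\exists^{\st}M)\bigl[\|\pi\|,\|\pi'\|<\tfrac{1}{M}\di|S_{\pi}(f)-S_{\pi'}(f)|<\tfrac{1}{n}\bigr].
\]

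The main obstacle is that the signature $t(g,n)$ demanded by $\RIE_{\ef}(t)$ forbids any dependence of $t$ on $f$, whereas a direct application of Theorem~\ref{TERM} to the above (with $f$ treated as a standard parameter) would produce a term in $f,g,n$. The remedy is a double use of the contraposition of Idealisation~\textsf{I}, which $\P$ allows for internal formulas with arbitrary, possibly nonstandard parameters as stressed below~\eqref{criv}. A first application over $\pi,\pi'$ turns the display above into $(\exists^{\st}z)(\forall\pi,\pi')(\exists M\in z)[\dots]$; the maximum $M^{\ast}:=\max z$ then works uniformly in $\pi,\pi'$, because the internal implication is monotonic in $M$ (increasing $M$ only tightens the antecedent $\|\pi\|<\tfrac{1}{M}$). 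A second application of the contraposition, now over the \emph{unrestricted} parameter $f$, eliminates the $f$-dependence in exactly the same fashion, producing
\[
(\forall^{\st}g,n)(\exists^{\st}M)(\forall f)\Bigl[\textup{ucm}(f,g)\di(\forall\pi,\pi')\bigl(\|\pi\|,\|\pi'\|<\tfrac{1}{M}\di|S_{\pi}(f)-S_{\pi'}(f)|\leq\tfrac{1}{n}\bigr)\Bigr],
\]
which is of the canonical form required by Theorem~\ref{TERM}.

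Applying Theorem~\ref{TERM} extracts a term $s$ from the preceding proof such that $\textsf{E-PA}^{\omega}$ proves the analogous statement with `$(\exists M\in s(g,n))$' replacing `$(\exists^{\st}M)$'; setting $t(g,n):=\max_{i<|s(g,n)|}s(g,n)(i)$ and again invoking monotonicity in $M$ yields precisely $\RIE_{\ef}(t)$. Because $t$ is constructed by the syntactic proof-translation underlying Theorem~\ref{TERM}, every inference in the $\P$-proof of $\RIE_{\ns}$ corresponds to a step of $t$, as claimed. The whole argument transfers to $\H$ and $\textsf{E-HA}^{\omega}$: the idealisation axiom \eqref{wellhung} together with its contraposition, as well as $\HAC_{\INT}$, are all present in $\H$ (with the restriction on $\varphi$ even lifted, per Section~\ref{graf2}), and the nonstandard Markov and independence-of-premise axioms of $\H$ suffice to carry out intuitionistically the outward movements of standard quantifiers invoked above.
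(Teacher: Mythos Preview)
Your proposal is correct and follows essentially the same route as the paper: bring $\RIE_{\ns}$ into the normal form $(\forall^{\st}u)(\exists^{\st}v)\varphi(u,v)$ and invoke Theorem~\ref{TERM}. The paper's own proof is a two-line sketch deferring details to \cite{sambon}, while you make explicit the two applications of the contraposition of \textsf{I} (first over $(\pi,\pi')$, then over $f$) and the monotonicity-in-$M$ step that eliminates the $f$-dependence of the standard witness; this is precisely the mechanism the paper alludes to with ``can be brought in the right form.''
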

\begin{proof}
The proof of \cite{loeb1}*{Prop.\ 12.3} goes through for nonstandard uniformly continuous functions as in $\RIE_{\ns}$ (without the use of \emph{Transfer}).   
Similar to the treatment of continuity in the previous section, $\RIE_{\ns}$ can be brought in the right form to apply Theorem \ref{TERM}.  The extracted term $t$ yields a term 
$s$ such that $\RIE_{\ef}(s)$.  
For details, we refer to the proof of Theorem 3.3 in \cite{sambon}*{\S3.1}.  
\end{proof}
We conclude that $\RIE_{\ns}$ is a theorem of classical Nonstandard Analysis (namely provable in $\P$), and that Theorem \ref{TERM} allows us to extract considerable computational content.  In particular, from the proof of $\RIE_{\ns}$, we can `read off' the algorithm $t$ such that $\RIE_{\ef}(t)$, which is a theorem of BISH.   

\medskip

Thus, the Theorem \ref{kater} at least partially contradicts the Bishop-Connes critique as we have exhibited the non-trivial computational content of a theorem of Nonstandard Analysis.  
The case of $\RIE_{\ns}$ is not an isolated accident:  As it turns out, the `term extraction theorem', namely Theorem \ref{TERM},  has an extremely wide scope as discussed in Section \ref{scope}.  

\medskip   

In the next section, we `upgrade' $\RIE_{\ef}(t)$ to a `more constructive version' which is meta-equivalent (as in Theorems \ref{lacky} and \ref{lacky2}) to $\RIE_{\ns}$.   
This provides us with a non-trivial example of reverse formalism.  

\subsubsection{Another example of reverse formalism}
We have observed that classical Nonstandard Analysis has computational content by way of an illustrative example involving $\RIE_{\ns}$ and $\RIE_{\ef}(t)$.  
We now show that the former is intimately linked (meta-equivalent) to an `even more constructive' version $\RIE_{\pw}(s)$.  In particular, we show that $\RIE_{\pw}(s)$ and $\RIE_{\ns}$ are one and the same theorem up to a syntactical translation of their proofs.  

\medskip

First of all, $\RIE_{\pw}(s)$ is defined as a `very constructive' version of $\RIE_{\ef}(t)$.  
\begin{thm}[$\RIE_{\pw}(s)$] For all $f:\R\di \R$, $g:\N\di \N$, and $k'$ we have
\begin{align*}\textstyle
(\forall k\leq s(g,k'))&\textstyle(\forall \textstyle x, y \in [0,1])(|x-y|<\frac{1}{g(k)} \di |f(x)-f(y)|\leq\frac{1}{k})\\
&\label{FEST}\textstyle\di  (\forall \pi, \pi' \in P([0,1]))\big(\|\pi\|,\| \pi'\|< \frac{1}{s(g,k')}  \di |S_{\pi}(f)- S_{\pi}(f)|\leq \frac{1}{k'} \big).
\end{align*}
\end{thm}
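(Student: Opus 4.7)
The plan is to follow the template set by Theorems \ref{lacky}, \ref{lacky2}, and \ref{kater}: start from a proof of $\RIE_{\ns}$ in $\P$, massage it into the $(\forall^{\st})(\exists^{\st})$ normal form required by the term extraction Theorem \ref{TERM}, and read off $s$ from the resulting finite list. First, I would fix any $f:\R\di\R$, a standard $g:\N\di\N$ and a standard $k'$, and suppose the ``modulus-$g$'' nonstandard continuity
\[
(\forall^{\st}k)(\forall x,y\in[0,1])\bigl(|x-y|<\tfrac{1}{g(k)}\di|f(x)-f(y)|\leq \tfrac{1}{k}\bigr)
\]
holds. This implies that $f$ is nonstandard uniformly continuous in the sense of \eqref{soareyou4}: if $x\approx y$ in $[0,1]$ and $k$ is standard, then $g(k)$ is standard by the basic axiom from Definition~\ref{debs}, so $|x-y|<\tfrac{1}{g(k)}$, hence $|f(x)-f(y)|\leq\tfrac{1}{k}$; letting $k$ range over all standard naturals yields $f(x)\approx f(y)$. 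Applying $\RIE_{\ns}$, I conclude that $f$ is nonstandard integrable, i.e.\ \eqref{soareyou5} holds for $f$.

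Second, I would unfold \eqref{soareyou5} and use (the contrapositive of) Idealisation \emph{exactly as in the convergence part of the proof of Theorem~\ref{lacky2}} to bring the nonstandard integrability of $f$ into the form
\[
(\forall^{\st}k)(\exists^{\st}N)(\forall \pi,\pi'\in P([0,1]))\bigl(\|\pi\|,\|\pi'\|<\tfrac{1}{N}\di|S_\pi(f)-S_{\pi'}(f)|\leq\tfrac{1}{k}\bigr).
\]
Combining this with the first step, the implication ``modulus-$g$ continuity $\di$ mesh-$N$ integrability at level $k'$'' takes the shape $(\forall^{\st}k)(\ldots)\di(\exists^{\st}N)(\ldots)$. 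Using classical logic and \textsf{I} as in \eqref{criv}, I pull the $\forall^{\st}k$ of the antecedent out as $\exists^{\st}K$ and merge it with $\exists^{\st}N$ into a single standard bound $M$, arriving at a sentence of the form
\[
(\forall^{\st}g,k')(\exists^{\st}M)(\forall f)\,\Phi(f,g,k',M),
\]
provable in $\P$, with $\Phi$ internal and expressing that the $g$-modulus condition at all $k\leq M$ forces the mesh bound $1/M$ to control the Riemann sums up to $1/k'$. Note that $f$ occurs only inside the internal kernel and not as a standard quantifier.

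Third, I apply Theorem~\ref{TERM} to the displayed sentence. This yields a term $t$ of $\textsf{E-PA}^{\omega}$ returning, for every $(g,k')$, a finite list of candidates for $M$; setting $s(g,k'):=\max_{i<|t(g,k')|}t(g,k')(i)$ produces a single term for which $\RIE_{\pw}(s)$ is provable in $\textsf{E-PA}^{\omega}$, as required. For the meta-equivalence in the vein of Theorems~\ref{lacky} and~\ref{lacky2}, I would further observe: any proof of $\RIE_{\pw}(s)$ in $\textsf{E-PA}^{\omega}$ (resp.\ $\textsf{E-HA}^{\omega}$) is already a proof in $\P$ (resp.\ $\H$); since $s$ is a term of $\textsf{E-PA}^{\omega}$ it is standard on standard inputs by Definition~\ref{debs}; so under the nonstandard continuity hypothesis \eqref{soareyou4}, instantiating $\RIE_{\pw}(s)$ at a sufficiently large $g$ and at each standard $k'$ delivers the nonstandard integrability \eqref{soareyou5}, recovering $\RIE_{\ns}$.

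The main obstacle is the second step: carefully massaging an implication whose antecedent begins with $\forall^{\st}$ and whose consequent begins with $\exists^{\st}$ into a clean $(\forall^{\st})(\exists^{\st})(\text{internal})$ form, while keeping $f$ \emph{outside} the standard quantifier block so that the term extracted by Theorem~\ref{TERM} depends only on the modulus $g$ and on the target precision $k'$. This uniformity in $f$ is exactly what $\RIE_{\pw}(s)$ demands and is made possible by the fact that, in Loeb's proof of $\RIE_{\ns}$ cited in Theorem~\ref{kater}, the dependence of the integration bound on $f$ factors entirely through the continuity modulus.
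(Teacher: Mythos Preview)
The displayed statement $\RIE_{\pw}(s)$ is not a theorem that the paper proves; it is a \emph{named formula schema} (a definition), exactly like $\RIE_{\ef}(t)$ and $\RIE_{\ns}$ just before it. The paper introduces it with ``$\RIE_{\pw}(s)$ is defined as a `very constructive' version of $\RIE_{\ef}(t)$'' and gives no proof, because there is nothing to prove: for a fixed term $s$, $\RIE_{\pw}(s)$ is simply a sentence whose provability is asserted only later, in Theorem~\ref{key}.

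What you have written is, in effect, a proof sketch of Theorem~\ref{key} (the extraction of $s$ from a proof of $\RIE_{\ns}$, together with the reverse direction). On that score your outline is correct and follows the same template the paper uses: bring $\RIE_{\ns}$ into the $(\forall^{\st})(\exists^{\st})$ normal form via Idealisation as in Theorem~\ref{lacky2}, apply the term-extraction Theorem~\ref{TERM}, take the maximum over the finite list to get $s$, and for the converse use the basic axioms of Definition~\ref{debs} to see that $s$ is standard on standard input. The paper's own proof of Theorem~\ref{key} is shorter only because it outsources item~(1) to \cite{sambon}*{Rem.~3.9} and handles items~(2)--(3) by a one-line appeal to the basic axioms, exactly as you do in your final paragraph. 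Your more explicit handling of the uniformity in $f$ (keeping $f$ inside the internal kernel so the extracted term depends only on $g$ and $k'$) is the substantive point behind the cited remark and is correctly identified.
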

As suggested by the notation, $\RIE_{\pw}(s)$ is indeed a `pointwise' version of $\RIE_{\ef}(t)$:  While the latter applies to continuous functions, the former applies 
to functions which are only continuous up to some precision (as measured by `$(\forall k\leq s(g,k'))$' in the antecedent).  In particular, $\RIE_{\pw}(s)$ tells us `how much' continuity is needed to approximate the Riemann integral up to precision $\frac{1}{k'}$.    

\medskip

The following theorem tells us that $\RIE_{\ns}$ can be translated to $\RIE_{\pw}(s)$ \emph{and vice versa}.  
In this way, $\RIE_{\pw}(s)$ bestows its computational content onto $\RIE_{\ns}$, a theorem of Nonstandard Analysis, i.e.\ we have found our first example of `reverse formalism'.  

\begin{thm}\label{key}~
\begin{enumerate}
\item From a proof of $\RIE_{\ns}$ in $\P$, an algorithm $s$ can be extracted such that $\RIE_{\pw}(s)$ is provable in $\textup{\textsf{E-PA}}^{\omega}$.  
\item Let $s$ be an algorithm.  From a proof in $\textsf{\textup{E-PA}}^{\omega}$ of $\RIE_{\pw}(s)$, a proof in $\P$ of $\RIE_{\ns}$ can be extracted.  
\item Let $s$ be an algorithm; then $\H$ proves that $\RIE_{\pw}(s)\di \RIE_{\ns}$.   
\end{enumerate}
\end{thm}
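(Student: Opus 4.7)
The plan is to mirror the pattern of Theorems \ref{lacky} and \ref{lacky2}, using Theorem \ref{TERM} for the forward direction (item 1) and the basic standardness axioms of Definition \ref{debs} for the reverse directions (items 2 and 3).

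For item (1), I start from $\P\vdash\RIE_\ns$ and bring the statement into normal form for Theorem \ref{TERM}. First, apply the Remark \ref{leffe}-style manipulation (unfold `$\approx$', pull out standard quantifiers, apply \textsf{I}) to the hypothesis of $\RIE_\ns$, then use $\HAC_\INT$ to extract a \emph{standard} modulus $g:\N\to\N$ satisfying $(\forall^\st k)(\forall x,y\in[0,1])(|x-y|<1/g(k)\to |f(x)-f(y)|\leq 1/k)$. The conclusion, nonstandard integrability \eqref{soareyou5}, is put in normal form by the same idealisation manoeuvre, yielding $(\forall^\st k')(\exists^\st N)(\forall \pi,\pi'\in P([0,1]))(\|\pi\|,\|\pi'\|<1/N\to |S_\pi(f)-S_{\pi'}(f)|\leq 1/k')$. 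Keeping $f$ as a free parameter and pulling all standard quantifiers (including the interior $(\forall^\st k)$ from the hypothesis, converted to a standard existential via implication-to-disjunction) to the front, the entire statement $\RIE_\ns$ becomes an instance of $(\forall^\st g,k')(\exists^\st N,K)\,\varphi$ with $\varphi$ internal. Theorem \ref{TERM} now yields a term producing a finite list of candidate pairs $(N,K)$; defining $s(g,k')$ to be the maximum over both coordinates of this list ensures simultaneously that $g$'s modulus condition need only be verified up to $s(g,k')$ and that any mesh below $1/s(g,k')$ gives sum-approximation within $1/k'$, which is exactly $\RIE_\pw(s)$ as provable in $\textsf{E-PA}^\omega$.

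For item (2), let $s$ be a term and suppose $\textsf{E-PA}^\omega\vdash\RIE_\pw(s)$; this proof remains valid in $\P$ since $\P$ extends $\textsf{E-PA}^\omega$. To derive $\RIE_\ns$ inside $\P$, fix $f$ nonstandard uniformly continuous and extract a standard modulus $g$ as above (via Remark \ref{leffe} and $\HAC_\INT$). For each standard $k'$, the basic axioms of Definition \ref{debs} guarantee that $s(g,k')$ is standard, so every $k\leq s(g,k')$ is standard and the hypothesis of $\RIE_\pw(s)$ is met by $g$. Hence $|S_\pi(f)-S_{\pi'}(f)|\leq 1/k'$ for all partitions with mesh $<1/s(g,k')$; since any infinitesimal mesh lies below the standard bound $1/s(g,k')$, we conclude $S_\pi(f)\approx S_{\pi'}(f)$ whenever $\|\pi\|,\|\pi'\|\approx 0$, i.e.\ nonstandard integrability of $f$.

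For item (3), the argument of item (2) invokes only the basic axioms of Definition \ref{debs}, the idealisation scheme \eqref{wellhung}, and $\HAC_\INT$, each of which is present in $\H$ (Section \ref{graf2}); so the same derivation goes through intuitionistically. The main technical obstacle is in item (1): the hypothesis and conclusion of $\RIE_\ns$ must be amalgamated into a single implication of the form $(\forall^\st\cdots)(\exists^\st\cdots)[\text{internal}]$ before Theorem \ref{TERM} can be invoked, and in particular the modulus $g$ has to be made to appear as a free universal parameter of $\RIE_\pw(s)$ on which $s$ may depend---this is precisely what the preliminary application of $\HAC_\INT$ to nonstandard uniform continuity accomplishes.
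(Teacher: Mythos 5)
Your overall strategy mirrors the paper's: item (1) by normalising $\RIE_\ns$ and invoking Theorem \ref{TERM} (the paper delegates this step to \cite{sambon}*{Rem.\ 3.9}), items (2) and (3) by the observation that a term $s$ gives standard output on standard input, à la Theorems \ref{lacky} and \ref{lacky2}. Two points, however, need attention. First, a small one: in item (1) the invocation of $\HAC_\INT$ is superfluous. To pass from $\RIE_\ns$ to the normal form $(\forall^\st g,k')(\exists^\st M,k)[\dots]$ you only need the \emph{trivial} direction, namely that a standard modulus $g$ with $(\forall^\st k)\varphi(k,g(k))$ yields $(\forall^\st k)(\exists^\st N)\varphi(k,N)$ (witness $N=g(k)$, standard by Definition \ref{debs}); this is exactly the manoeuvre in Theorem \ref{nogwelconsenal}, and it goes in the opposite direction from $\HAC_\INT$. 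The choice principle is needed in item (2), not item (1).

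The more substantive issue is in item (3). You assert that the derivation for item (2) ``invokes only the basic axioms of Definition \ref{debs}, the idealisation scheme \eqref{wellhung}, and $\HAC_\INT$,'' and hence transfers verbatim to $\H$. This is not accurate: your item (2) argument extracts a standard modulus from nonstandard uniform continuity ``via Remark \ref{leffe} and $\HAC_\INT$,'' and the Remark \ref{leffe} manipulation explicitly uses \emph{classical} logic to pull a universal standard quantifier $(\forall^\st N)$ out of an antecedent (turning $[(\forall^\st N)A(N)]\di B$ into $(\exists^\st N)[A(N)\di B]$). In $\H$, which is based on intuitionistic logic, this step is not available for free; it is recovered by the nonstandard Markov principle $\textsf{HGMP}^\st$ (and, for the idealisation contrapositive, $\textsf{NCR}$), both from Definition \ref{flah}. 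Your conclusion is right because $\H$ does contain these axioms, but the justification ``same derivation, same axioms'' is wrong: the $\H$-derivation is genuinely different from the $\P$-derivation at precisely this step, and an account of item (3) must acknowledge the substitution of $\textsf{HGMP}^\st$/$\textsf{NCR}$ for classical reasoning.
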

\begin{proof}
The first item is proved in \cite{sambon}*{Rem.\ 3.9}; similar results may be found in \cite{samzooII}.  For the other items, it suffices to note that $v$ produces standard output for standard input in $\P$ and $\H$, which follows form the `basic axioms' discussed in Sections \ref{graf1} and \ref{graf2}.  In this way, $\RIE_{\ns}$ easily follows from $\RIE_{\pw}(v)$ in the same way as in the proofs of Theorems \ref{lacky} and \ref{lacky2}.    \qed
\end{proof}
It is interesting to note that the third item can be proved using more `elementary' systems than $\H$, i.e.\ we only need the basic axioms from Definition~\ref{debs}.  

\medskip

Again, the connection between $\RIE_{\ns}$ and $\RIE_{\pw}(v)$ is not an isolated accident:  it is shown in \cites{sambon, samzooII} that term extraction via Theorem \ref{TERM} can always produce `pointwise' theorems like $\RIE_{\pw}(v)$, which in turn imply the original theorem of Nonstandard Analysis as in Theorem \ref{key}.   

\medskip

The previous theorem is our third example of \emph{reverse formalism}, and the same observations as in the previous sections can be made:  For the nonstandard statement $\RIE_{\ns}$, there is a `meta-equivalent' computational version $\RIE_{\pw}(s)$.  The nonstandard version is much shorter, and its use of infinitesimals as in `$\approx$' provides an elegant shorthand for the existence of moduli.  In particular, the latter give rise to the predicates` $\approx$' in the former, \emph{and vice versa} by Theorem \ref{key}.  As discussed in Section \ref{scope}, similar results exist for a large part of theorems from Nonstandard Analysis.    

\medskip 

What is left is to discuss is the scope of reverse formalism (Section \ref{scope}) and possible criticism (Section \ref{promi}).

\subsection{The scope of reverse formalism}\label{scope}
We discuss the scope of reverse formalism and the central Theorem \ref{TERM} in particular.  In light of the latter, formulas of the form $(\forall^{\st}x)(\exists^{\st}y)\varphi(x,y)$ ($\varphi$ internal) play a central role and we shall refer to such formulas as `normal forms'.  Based on the latter, we introduce \emph{pure} Nonstandard Analysis, i.e.\ that part of the latter falling within the scope of Theorem \ref{TERM}, and hence reverse formalism.  This turns out to be a considerable part of Nonstandard Analysis, as is clear from the following \emph{informal} description.     
\begin{des}[Pure Nonstandard Analysis]\label{pure}  
A theorem of \emph{pure} Nonstandard Analysis is built up as follows.   
\begin{enumerate}
\renewcommand{\theenumi}{\roman{enumi}} 
\item Only \emph{nonstandard definitions} (of continuity, compactness, \dots) are used; \textbf{no} epsilon-delta definitions are used.\label{controlisbetter}   
The former have (nice) normal forms and give rise to the associated constructive definitions from Figure \ref{ananlo3}.
\item Normal forms are closed under \emph{implication} by Theorem \ref{nogwelconsenal}.  
\item Normal forms are closed under \emph{prefixing a quantifier over all infinitesimals} by Theorem \ref{hujiku}, i.e.\ if $\Phi(\eps)$ has a normal form, so does $(\forall \eps\approx 0)\Phi(\eps)$. 
\item Fragments of the axioms \emph{Transfer} and \emph{Standard Part} corresponding to the Big Five of Reverse Mathematics have normal forms, as shown in \cite{sambon}*{\S4}.\label{notrep}
\item Formulas\footnote{We stress that item \ref{horgi} should be interpreted in a specific narrow technical sense (beyond the scope of this paper), namely as discussed in \cite{sambon2}.} involving the \emph{Loeb measure} (See \cite{sambon2}) have normal forms.\label{horgi}
\end{enumerate}
\end{des}
Regarding item \ref{controlisbetter} and as noted in Example \ref{krel}, nonstandard definitions give rise to the associated constructive definition-with-a-modulus.  
The following list provides an overview for common notions\footnote{A space is \emph{F-compact} in Nonstandard Analysis if there is a discrete grid which approximates every point of the space up to infinitesimal error, i.e.\ the intuitive notion of compactness from physics and engineering.\label{fookie}}, where $\Paai$ is \emph{Transfer} limited to $\Pi_{1}^{1}$-formulas (See \cite{sambon}*{\S4} or Section \ref{expl}), while $(\mu^{2})$ and $(\mu_{1})$ are the functional versions of $\ACA_{0}$ and $\FIVE$ (See \cite{avi2} or Section \ref{expl}).      
\begin{figure}[h!]
\begin{center}
\begin{tabular}{|c|c|}
\hline
Nonstandard Analysis definition  & Constructive/functional definition \\
  \hline \hline
 nonstandard convergence & convergence with a modulus\\
   \hline  
 nonstandard (uniform) continuity & (uniform) continuity with a modulus\\
\hline
  F-compactness$^{\ref{fookie}}$ & total boundedness \\
\hline
  nonstandard differentiability  & differentiability with a modulus  \\
   & and the derivative given \\
\hline
   nonstandard Riemann integration & Riemann integration with a modulus \\\hline\hline
      $\paai$ & Feferman's mu-operator $(\mu^{2})$ \\\hline
      $\Paai$ & Feferman's second mu-operator $(\mu_{1})$ \\\hline
\hline
\end{tabular}
\end{center}
\caption{Nonstandard and constructive definitions}
\label{ananlo3}
\end{figure}~\\
Regarding item \ref{notrep}, it is shown in \cite{sambon} that Theorem~\ref{TERM} applies to (nonstandard versions of) the Big Five of Reverse Mathematics; For completeness, three representative examples are given in Section~\ref{EXAKE} below. 
Furthermore, it is shown in \cite{samzoo, samzooII} that the same holds for the theorems in the Reverse Mathematics zoo (which gathers exceptions not fitting within the Big Five).  
By the observations in Section \ref{RM}, Theorem \ref{TERM} seems to applies to \emph{most of ordinary mathematics}.   

\medskip

In other words, Theorem \ref{TERM} allows us to extract computational content from theorems of Nonstandard Analysis, while the scope of the former theorem is \emph{most of ordinary mathematics}, as qualified by the classification of Reverse Mathematics in Section \ref{RM}.  The computational content of classical Nonstandard Analysis is thus seen to be vast.  

\medskip

The previous observation is important as we noted in Section \ref{hotgie} that Bishop stresses the importance of studying actual mathematics rather than -in his own words- dinky formal systems (but see Section \ref{informo}).  
For this reason, we list some of the theorems studied in \cite{sambon} in the same way as Theorem \ref{key}: The fundamental theorem of calculus, the Picard and Peano existence theorems, the intermediate value theorem, the uniform limit theorem, Dini's theorem, Heine's theorem, the monotone convergence theorem, the Heine-Borel lemma, the Stone-Weierstra\ss~theorem, and the Weierstra\ss~maximum and approximation theorems.  
Twice as many `nameless' theorems are studied in \cite{sambon}.  
Several representative examples are listed in Section \ref{EXAKE}. 

\section{Conclusion}
We formulate the conclusion to this paper and formulate possible criticism of this conclusion, reverse formalism in particular.   
\subsection{Conclusion on reverse formalism}
We have provided examples of reverse formalism in support of the claim:
\begin{center}
\emph{The presence of ideal objects \(in particular infinitesimals\) in Nonstandard Analysis yields the \emph{ubiquitous presence} of computational content.}
\end{center}
We are hopeful we have convinced the reader of this claim \emph{for the case where `computational content' has a mainstream mathematics meaning}.  
In other words, we believe we have effectively refuted Connes' critique by showing that theorems of (some kind of) computable mathematics can be obtained from theorems of Nonstandard Analysis, and vice versa.  

\medskip

The case of Bishop's critique is different as our results, while rich
in computational content, were all obtained in classical mathematics, thus not really constructive mathematics.    
Nonetheless, we have obtained `indispensable' constructive content from Nonstandard Analysis in Sections \ref{CTT} and \ref{HTT} in the form of modulus functions.   One can probably endlessly debate the status of a theorem of constructive mathematics proved via classical means.  We shall therefore take the middle ground and hope that Bishop would have acknowledged the ubiquitous presence of \emph{some kind of computational content} in Nonstandard Analysis (as he has done for Brouwer's intuitionistic mathematics and the Russian school of recursive mathematics), with the reservation that this content is not exactly what he has in mind in his branch of constructivism.  

\medskip

As to criticism of reverse formalism, perhaps the easiest and most straightforward way of dismissing the latter is based on the rejection of proof translations (as the latter are used in an essential way to prove Theorem \ref{TERM} in \cite{brie, sambon}).  We shall discuss this argument at length in Section \ref{promi} while we refer to \cite{kohlenbach3} for an introduction to proof translations, also called `functional interpretations'.  

\medskip

This discussion has wider ranging implications as follows:  proof translations provide a method of converting proofs of certain theorems in a given `strong' system into a proof in a given `weak' system, often adding extra computational information to the theorem in the process.  An interesting question emerges: 
\begin{quote}\emph{
Given a proof $p$ of a theorem $T$ in classical logic, and a proof translation $\Xi$ formulated in constructive mathematics and outputting proofs in the latter; in what way is the resulting theorem and proof $\Xi(p, T)$ `constructive'?   
}\end{quote}
We do not claim to answer this question definitively, but we do provide a number of arguments in Section \ref{promi} against the dismissal of reverse formalism based on the rejection of proof translations.

\subsection{Criticism of reverse formalism}\label{promi}
We discuss possible a criticism of reverse formalism, the proof translation from \cite{brie} needed to prove Theorem \ref{TERM} in particular.  
We do not claim to provide definite answers, but we do provide a number of arguments against the rejection of proof translations in our context. 
\subsubsection{An emphasis on informal mathematics}\label{informo}
An easy way to dismiss reverse formalism is to point out its reliance on formal systems (the proof translations in \cite{brie} in particular) and that Bishop was against the use of the latter in his branch of constructivism.    
We now argue against this easy dismissal as follows.  

\medskip

First of all, while Bishop has indeed sneered at formal systems (See Section~\ref{hotgie} for two telling quotes), he did change his mind later in life.  
Let us first consider the following historical remark by Nerode from the proceedings of Bishop's memorial meeting (\cite{bishf}).   
\begin{quote}
After the publication of his book \underline{Constructive Analysis}, Bishop made a
tour of the eastern universities that included Cornell. He told me then that he
was trying to communicate his viewpoint directly to the mathematical 
community, rather than through the logicians. He associated the logicians with
defending the turf of codified formal systems, while he himself believed in the
free exercise of positive affirmative mathematical faculties, free of artificial
formal limitations. After the eastern tour was over, he said the trip may
have been counterproductive. He felt that his mathematical audiences were
not taking the work seriously. He was surprised to get a more sympathetic
hearing from the logicians. (\cite{mem}*{p.\ 79}, underlining in original)
\end{quote}
After the aforementioned `sympathetic hearing' from logicians, Bishop indeed became more open to formal systems, as 
is clear from the following quote:
\begin{quote}
Another important foundational problem is to find a formal system that
will efficiently express existing predictive mathematics. I think we should
keep the formalism as primitive as possible, starting with a minimal system
and enlarging it only if the enlargement serves a genuine mathematical need.
In this way the formalism and the mathematics will hopefully interact to
the advantage of both. (\cite{nukino}*{p.\ 60})
\end{quote}
Hence, the blanket statement `Bishop was against the use of formal systems' does not do justice to history; the previous quote implies it is plainly wrong.  

\medskip

Secondly, \emph{even if} we follow the early Bishop in his rejection of formal systems trying to capture his constructive mathematics, the latter is \emph{not} the goal 
of reverse formalism or proof translations in general.  The only claim being made is that one can obtain via the proof translations certain results in constructive mathematics.  
As it happens, proof translations and Bishop's later views on constructive mathematics are compatible, and even intimately related, as follows: Bishop discusses in \cite{nukino}*{p.\ 56} the concept of \emph{numerical implication} an alternative constructive notion of implication based on G\"odel's Dialectica interpretation (the latter is also the basis for the proof translation in \cite{brie} and the proof of Theorem \ref{TERM}).  

\medskip

Bishop notes that in practice the usual definition of implication amounts to numerical implication.   
Furthermore, Bishop conjectures that numerical implication can be derived constructively, while his derivation in \cite{nukino} uses non-constructive principles like \emph{independence of premises} and \emph{Markov's principle}.  As it happens, system $\H$ contains nonstandard versions of the latter two axioms (See Definition \ref{flah}) and these axioms are essential in proving the \emph{Characterization Theorem} (\cite{brie}*{Theorem 5.8}) for the proof translation of $\H$ \emph{in the case of implication}.   

\medskip

While the previous is highly suggestive, we will not push things further than observing that Bishop's later views (esp.\ the notion of so-called numerical implication) were strongly inspired by proof translations.  

\subsubsection{An emphasis on proofs}
As it is clear from the definition of the BHK interpretation in Section \ref{hitch}, constructive mathematics has a certain emphasis on proofs.  
This is also clear from the following quotes.    
\begin{quote}
The basic tenets [of intuitionism] may be summarized as follows. [\dots]
(b) It does not make sense to think of truth or falsity of a mathematical
statement independently of our knowledge concerning the statement. A
statement is \emph{true} if we have proof of it, and \emph{false} if we can show that the
assumption that there is a proof for the statement leads to a contradiction.
For an arbitrary statement we can therefore not assert that it is either true
or false. (\cite{troeleke1}*{I.1.6})
\end{quote}
\begin{quote}
From an intuitionistic standpoint, therefore, an understanding of a mathematical statement consists in the capacity to recognize a proof of it when presented with one; and the truth of such a statement can consist only in the
existence of such a proof. (\cite{dummy}*{p.\ 5})
\end{quote}
As another case in point, Bridges' paper \emph{Constructive truth in practice} (\cite{brikkenschijten}) is devoted (mostly) to Bishop's mathematics and discusses 
provability at length, but does not even mention the word truth except in the title.  

\medskip

Now, we do not wish to imply that there is anything \emph{wrong} with this focus on proof; it is in fact a natural consequence of the constructivist philosophy of mathematics.  
However, in our opinion, the constructive preoccupation with \emph{proof} seems difficult to square with a rejection of proof translations.  
Rather, one would expect constructivists to embrace the latter as a `logical next step', again in our opinion.  

\medskip

In conclusion, we have observed that there is a focus on proofs in constructive mathematics.  In this light, the rejection of proof translations by constructivists seems odd, to say the least.     

\subsubsection{An emphasis on the mystical}
This section is perhaps the most controversial of this paper.   Our message is however simple:  Bishop faulted Brouwer for his metaphysical writings, while a rejection of certain results obtained by proof translations -in our opinion- has the danger of going off into metaphysical speculation again.  

\medskip
\noindent
Firstly, consider the following quote by Bishop on Brouwer's intuitionism.  
\begin{quote}
More important, Brouwer's system itself had traces of
idealism and, worse, of metaphysical speculation. There was a 
preoccupation with the philosophical aspects of constructivism at the 
expense of concrete mathematical activity.  (\cite{bish1}*{p.\ 6})
\end{quote}
It cannot be denied that Brouwer's work (\cite{brouw}) contains metaphysical speculation and elements of mysticism (See also \cites{hesselich, besselich}).  
We do not wish to judge Brouwer or intuitionism on this basis, but merely point out this fact and Bishop's view on this matter.  
We now reconsider the question:  
\begin{quote}\emph{
Given a proof $p$ of a theorem $T$ in classical logic, and a proof translation $\Xi$ formulated in constructive mathematics and outputting proofs in the latter; in what way is the resulting theorem and proof $\Xi(p, T)$ `constructive'?   
}\end{quote}
Now, it has been suggested that certain extensions of Heyting arithmetic like \textsf{HA}$^\omega$ (See e.g.\ \cite{diedif}) may serve as formalisations for Bishop's mathematics.  
Hence, proofs in Heyting arithmetic $\textsf{HA}^{\omega}$ produced by a proof interpretation which may be formulated in (a suitable extension of) Heyting arithmetic \emph{should} count as constructive mathematics, \emph{even if} the original proof (the input of the proof translation; a finite list of symbols) made use of classical logic.  

\medskip

There are no \emph{formal} objections one can bring against the previous claim, as the output of the proof translation as well as its verification take place in a formal system for constructive mathematics.  There are perhaps other objections one can bring against this claim, but our reply would be that there is the danger of going off into metaphysical speculation \emph{which was exactly what Bishop faulted Brouwer for and tried to avoid}.

\begin{ack}\rm
This research was supported by the following entities: FWO Flanders, the John Templeton Foundation, the Alexander von Humboldt Foundation, LMU Munich (via the Excellence Initiative), and the Japan Society for the Promotion of Science.  
The author expresses his gratitude towards these institutions. 
The author would also like to thank the two referees for their helpful remarks which greatly improved this paper.  
\end{ack}


\appendix

\section{The formal systems $\P$ and $\H$ in full detail}\label{FULL}


\subsection{G\"odel's system ${T}$}\label{TITI}
In this section, we briefly introduce G\"odel's system ${T}$ and the associated systems $\textsf{E-PA}^{\omega}$ and $\textsf{E-PA}^{\omega*}$.  
In his famous \emph{Dialectica} paper (\cite{godel3}), G\"odel defines an interpretation of intuitionistic arithmetic into a quantifier-free calculus of functionals.  This calculus is now known as `G\"odel's system ${T}$', and is essentially just primitive recursive arithmetic (\cite{buss}*{\S1.2.10}) with the schema of recursion expanded to \emph{all finite types}.  
The set of all finite types $\pmb{T}$ is:
\begin{center}
(i) $0\in \pmb{T}$   and   (ii)  If $\sigma, \tau\in \pmb{T}$ then $( \sigma \di \tau) \in \pmb{T}$,
\end{center}
where $0$ is the type of natural numbers, and $\sigma\di \tau$ is the type of mappings from objects of type $\sigma$ to objects of type $\tau$.  Hence, G\"odel's system ${T}$ includes `recursor' constants $\pmb{R}^{\rho}$ for every finite type $\rho\in \pmb{T}$, defining primitive recursion as follows:  
\be\label{PR}\tag{\textsf{\textup{PR}}}
\pmb{R}^{\rho}(f,g, 0):=f   \textup{ and } \pmb{R}^{\rho}(f, g, n+1):=g(n, \pmb{R}^{\rho}(f, g, n)),
\ee
for $f^{\rho}$ and $g^{0\di( \rho\di \rho)}$.  
The system $\textsf{E-PA}^{\omega}$ is a combination of \emph{Peano Arithmetic} and system $T$, and the full axiom of extensionality \eqref{EXT}.  
The detailed definition of $\textsf{E-PA}^{\omega}$ may be found in \cite{kohlenbach3}*{\S3.3};  We do introduce the notion of equality and extensionality in $\textsf{E-PA}^{\omega}$, as these notions are needed below.
\bdefi[Equality]\label{FAK}
The system $\textsf{E-PA}^{\omega}$ includes equality between natural numbers `$=_{0}$' as a primitive.  Equality `$=_{\tau}$' for type $\tau$-objects $x,y$ is then:
\be\label{aparth}
[x=_{\tau}y] \equiv (\forall z_{1}^{\tau_{1}}\dots z_{k}^{\tau_{k}})[xz_{1}\dots z_{k}=_{0}yz_{1}\dots z_{k}]
\ee
if the type $\tau$ is composed as $\tau\equiv(\tau_{1}\di \dots\di \tau_{k}\di 0)$.  
The usual inequality predicate `$\leq_{0}$' between numbers has an obvious definition, and the predicate `$\leq_{\tau}$' is just `$=_{\tau}$' with `$=_{0}$' replaced by `$\leq_{0}$' in \eqref{aparth}.    
The \emph{axiom of extensionality} is the statement that for all $\rho, \tau\in \pmb{T}$, we have:
\be\label{EXT}\tag{\textsf{E}}  
(\forall  x^{\rho},y^{\rho}, \varphi^{\rho\di \tau}) \big[x=_{\rho} y \di \varphi(x)=_{\tau}\varphi(y)   \big], 
\ee 
\edefi
Next, we introduce $\textsf{E-PA}^{\omega*}$, a definitional extension of $\textsf{E-PA}^{\omega}$ from \cite{brie} with a type for finite sequences.  In particular,  the set $\pmb{T}^{*}$ is defined as:
\begin{center}
(i) $0\in \pmb{T}^{*}$,   (ii)  If $\sigma, \tau\in \pmb{T}^{*}$ then $ (\sigma \di \tau) \in \pmb{T}^{*}$, and (iii) If $\sigma \in \pmb{T}^{*}$, then $\sigma^{*}\in \pmb{T}^{*}$,
\end{center}
where $\sigma^{*}$ is the type of finite sequences of objects of type $\sigma$.  The system $\textsf{E-PA}^{\omega*}$ includes \eqref{PR} for all $\rho\in \pmb{T}^{*}$, as well as dedicated `list recursors' to handle finite sequences for any $\rho^{*}\in \pmb{T}^{*}$.        
A detailed definition of $\textsf{E-PA}^{\omega*}$ may be found in \cite{brie}*{\S2.1}.  
We now introduce some notations specific to $\textsf{E-PA}^{\omega*}$, as also used in \cite{brie}.
\begin{nota}[Finite sequences]\label{skim}\rm
The system $\textsf{E-PA}^{\omega*}$ has a dedicated type for `finite sequences of objects of type $\rho$', namely $\rho^{*}$.  Since the usual coding of finite sequences of natural numbers goes through in $\textsf{E-PA}^{\omega*}$, we shall not always distinguish between $0$ and $0^{*}$. 
Similarly, we do not always distinguish between `$s^{\rho}$' and `$\langle s^{\rho}\rangle$', where the former is `the object $s$ of type $\rho$', and the latter is `the sequence of type $\rho^{*}$ with only element $s^{\rho}$'.  The empty sequence for the type $\rho^{*}$ is denoted by `$\langle \rangle_{\rho}$', usually with the typing omitted.  Furthermore, we denote by `$|s|=n$' the length of the finite sequence $s^{\rho^{*}}=\langle s_{0}^{\rho},s_{1}^{\rho},\dots,s_{n-1}^{\rho}\rangle$, where $|\langle\rangle|=0$, i.e.\ the empty sequence has length zero.
For sequences $s^{\rho^{*}}, t^{\rho^{*}}$, we denote by `$s*t$' the concatenation of $s$ and $t$, i.e.\ $(s*t)(i)=s(i)$ for $i<|s|$ and $(s*t)(j)=t(|s|-j)$ for $|s|\leq j< |s|+|t|$. For a sequence $s^{\rho^{*}}$, we define $\overline{s}N:=\langle s(0), s(1), \dots,  s(N)\rangle $ for $N^{0}<|s|$.  
For a sequence $\alpha^{0\di \rho}$, we also write $\overline{\alpha}N=\langle \alpha(0), \alpha(1),\dots, \alpha(N)\rangle$ for \emph{any} $N^{0}$.  By way of shorthand, $q^{\rho}\in Q^{\rho^{*}}$ abbreviates $(\exists i<|Q|)(Q(i)=_{\rho}q)$.  Finally, we shall use $\underline{x}, \underline{y},\underline{t}, \dots$ as short for tuples $x_{0}^{\sigma_{0}}, \dots x_{k}^{\sigma_{k}}$ of possibly different type $\sigma_{i}$.          
\end{nota}
We have used $\textsf{E-PA}^{\omega}$ and $\textsf{E-PA}^{\omega*}$ interchangeably in this paper.  Our motivation is the `star morphism' used in Robinson's approach to Nonstandard Analysis, and the ensuing potential for confusion.

\subsection{The classical system $\P$}\label{PAPA}
In this section, we introduce the system $\P$, a conservative extension of $\textsf{E-PA}^{\omega}$ with fragments of Nelson's $\IST$.  

\medskip

To this end, we first introduce the base system $\textsf{E-PA}_{\st}^{\omega*}$.  
We use the same definition as \cite{brie}*{Def.~6.1}, where \textsf{E-PA}$^{\omega*}$ is the definitional extension of \textsf{E-PA}$^{\omega}$ with types for finite sequences as in \cite{brie}*{\S2}.  
The set $\T^{*}$ is defined as the collection of all the terms in the language of $\textsf{E-PA}^{\omega*}$.    
\bdefi\label{debs}
The system $ \textsf{E-PA}^{\omega*}_{\st} $ is defined as $ \textsf{E-PA}^{\omega{*}} + \T^{*}_{\st} + \textsf{IA}^{\st}$, where $\T^{*}_{\st}$
consists of the following axiom schemas.
\begin{enumerate}
\item The schema\footnote{The language of $\textsf{E-PA}_{\st}^{\omega*}$ contains a symbol $\st_{\sigma}$ for each finite type $\sigma$, but the subscript is essentially always omitted.  Hence $\T^{*}_{\st}$ is an \emph{axiom schema} and not an axiom.\label{omit}} $\st(x)\wedge x=y\di\st(y)$,
\item The schema providing for each closed\footnote{A term is called \emph{closed} in \cite{brie} (and in this paper) if all variables are bound via lambda abstraction.  Thus, if $\underline{x}, \underline{y}$ are the only variables occurring in the term $t$, the term $(\lambda \underline{x})(\lambda\underline{y})t(\underline{x}, \underline{y})$ is closed while $(\lambda \underline{x})t(\underline{x}, \underline{y})$ is not.  The second axiom in Definition \ref{debs} thus expresses that $\st_{\tau}\big((\lambda \underline{x})(\lambda\underline{y})t(\underline{x}, \underline{y})\big)$ if $(\lambda \underline{x})(\lambda\underline{y})t(\underline{x}, \underline{y})$ is of type $\tau$.  We essentially always omit lambda abstraction for brevity.\label{kootsie}} term $t\in \T^{*}$ the axiom $\st(t)$.
\item The schema $\st(f)\wedge \st(x)\di \st(f(x))$.
\end{enumerate}
The external induction axiom \textsf{IA}$^{\st}$ is as follows.  
\be\tag{\textsf{IA}$^{\st}$}
\Phi(0)\wedge(\forall^{\st}n^{0})(\Phi(n) \di\Phi(n+1))\di(\forall^{\st}n^{0})\Phi(n).     
\ee
\edefi
Secondly, we introduce some essential fragments of $\IST$ studied in \cite{brie}.  
\bdefi[External axioms of $\P$]~
\begin{enumerate}
\item$\HAC_{\INT}$: For any internal formula $\varphi$, we have
\be\label{HACINT}
(\forall^{\st}x^{\rho})(\exists^{\st}y^{\tau})\varphi(x, y)\di \big(\exists^{\st}F^{\rho\di \tau^{*}}\big)(\forall^{\st}x^{\rho})(\exists y^{\tau}\in F(x))\varphi(x,y),
\ee
\item $\textsf{I}$: For any internal formula $\varphi$, we have
\[
(\forall^{\st} x^{\sigma^{*}})(\exists y^{\tau} )(\forall z^{\sigma}\in x)\varphi(z,y)\di (\exists y^{\tau})(\forall^{\st} x^{\sigma})\varphi(x,y), 
\]
\item The system $\P$ is $\textsf{E-PA}_{\st}^{\omega*}+\textsf{I}+\HAC_{\INT}$.
\end{enumerate}
\end{defi}
Note that \textsf{I} and $\HAC_{\INT}$ are fragments of Nelson's axioms \emph{Idealisation} and \emph{Standard part}.  
By definition, $F$ in \eqref{HACINT} only provides a \emph{finite sequence} of witnesses to $(\exists^{\st}y)$, explaining its name \emph{Herbrandized Axiom of Choice}.   

\medskip

The system $\P$ is connected to $\textsf{E-PA}^{\omega}$ by the following theorem.    
Here, the superscript `$S_{\st}$' is the syntactic translation defined in \cite{brie}*{Def.\ 7.1}.  
\begin{thm}\label{consresult}
Let $\Phi(\tup a)$ be a formula in the language of \textup{\textsf{E-PA}}$^{\omega*}_{\st}$ and suppose $\Phi(\tup a)^\Sh\equiv\forallst \tup x \, \existsst \tup y \, \varphi(\tup x, \tup y, \tup a)$. If $\Delta_{\intern}$ is a collection of internal formulas and
\be\label{antecedn}
\P + \Delta_{\intern} \vdash \Phi(\tup a), 
\ee
then one can extract from the proof a sequence of closed\footnote{Recall the definition of closed terms from \cite{brie} as sketched in Footnote \ref{kootsie}.\label{kootsie2}} terms $t$ in $\mathcal{T}^{*}$ such that
\be\label{consequalty}
\textup{\textsf{E-PA}}^{\omega*} + \Delta_{\intern} \vdash\  \forall \tup x \, \exists \tup y\in \tup t(\tup x)\ \varphi(\tup x,\tup y, \tup a).
\ee
\end{thm}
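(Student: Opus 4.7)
The plan is to follow the standard Shoenfield/Dialectica-style approach used in \cite{brie}, adapted to the nonstandard setting: define the interpretation $(\cdot)^{S_{\st}}$ by recursion on formula structure, then prove a \emph{soundness theorem} by induction on the length of a derivation in $\P$. Concretely, recall the recursive definition of $\Phi^{S_{\st}}\equiv\forall^{\st}\underline{x}\,\exists^{\st}\underline{y}\,\varphi_{S_{\st}}$: on internal atoms one sets $\varphi_{S_{\st}}\equiv \varphi$ with empty quantifier blocks; on $\st(x)$ one uses $\exists^{\st}y(x\in y)$; conjunction is handled componentwise; implication, negation and universal quantification over internal variables are handled by pulling the $\forall^{\st}/\exists^{\st}$ prefix out using intuitionistic prenexing and then encoding the required choice via \emph{finite sequences of witnesses} (this is exactly what makes $t(\underline{x})$ a finite list, as in the conclusion \eqref{consequalty}).

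Given this interpretation, I would prove the following soundness statement by induction on derivations: if $\P+\Delta_{\intern}\vdash \Phi(\underline{a})$ with $\Phi^{S_{\st}}\equiv\forall^{\st}\underline{x}\,\exists^{\st}\underline{y}\,\varphi(\underline{x},\underline{y},\underline{a})$, then there exist closed terms $\underline{t}\in\mathcal{T}^{*}$ such that $\textsf{E-PA}^{\omega*}+\Delta_{\intern}\vdash \forall \underline{x}\,\exists \underline{y}\in \underline{t}(\underline{x})\,\varphi(\underline{x},\underline{y},\underline{a})$. Each logical axiom and each axiom schema of $\P$ must be shown to admit such a witness. For the purely logical axioms and the classical rules this is routine, although the case of contraction forces the use of \emph{finite tuples} of witnesses (hence $\mathcal{T}^{*}$ rather than $\mathcal{T}$). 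For the external axioms of $\textsf{E-PA}^{\omega*}_{\st}$ in Definition~\ref{debs}, one uses that all ``specific'' constants in $\mathcal{T}^{*}$ are standard: e.g.\ for $\st(t)$ one simply outputs the list $\langle t\rangle$; for the closure axiom $\st(f)\wedge\st(x)\di\st(f(x))$ one lets the witnessing term be the application functional. The external induction schema $\textsf{IA}^{\st}$ is interpreted via primitive recursion on the standard natural numbers, using the recursors $\pmb{R}^{\rho}$ from system $T$.

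The two nontrivial schemas are $\textsf{I}$ and $\HAC_{\INT}$, but these are in fact designed to match the $(\cdot)^{S_{\st}}$ interpretation: for $\HAC_{\INT}$ the witness is the very functional $F^{\rho\to\tau^{*}}$ demanded by the axiom, and for $\textsf{I}$ the interpretation is satisfied by simply collecting the finite list $x$ occurring on the left-hand side of the idealisation schema \eqref{wellhung}. In other words, each of these schemas is already in $(\cdot)^{S_{\st}}$-normal form with an obvious choice of witness, which is precisely why \cite{brie} includes them (rather than the stronger forms of $\textsf{S}$ or \emph{Transfer}, which would fail to have computable witnesses, as signalled by the discussion of $\paai$ in Section~\ref{graf1}). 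Finally, one checks that the internal hypotheses $\Delta_{\intern}$ are translated to themselves (as internal formulas have trivial $S_{\st}$-translation), so they are simply carried along into the conclusion.

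The main obstacle is bookkeeping rather than conceptual: one must verify that every axiom and rule of $\P$ has a witness expressible in $\mathcal{T}^{*}$, uniformly in the free parameters $\underline{a}$, and that the term constructions compose correctly through \emph{modus ponens} (which is where the finite-list structure does the real work, since an implication $A\di B$ translates in a way that demands that witnesses for $A$ be converted into lists of candidate witnesses for $B$). Since all of this machinery is already fully developed in \cite{brie}*{\S5--\S7}, the proof ultimately reduces to invoking their Soundness Theorem for $(\cdot)^{S_{\st}}$, observing that our $\P$ is a subsystem of the system treated there, and reading off the extracted terms $\underline{t}$; this also yields the \emph{algorithmic} nature of the extraction procedure emphasised in Section~\ref{fraki}.
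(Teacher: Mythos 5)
Your proposal is correct and matches the paper's approach: the paper's proof is literally the one-line ``Immediate by \cite{brie}*{Theorem 7.7},'' which is exactly the reduction to the soundness theorem for $(\cdot)^{S_{\st}}$ that you invoke in your final paragraph. The preceding sketch of the interpretation and the case analysis of the axioms of $\P$ is a faithful (if informal) account of how that theorem is proved in \cite{brie}, but it is not material the present paper supplies or needs to re-establish.
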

\begin{proof}
Immediate by \cite{brie}*{Theorem 7.7}.  
\end{proof}
The proofs of the soundness theorems in \cite{brie}*{\S5-7} provide an algorithm $\mathcal{A}$ to obtain the term $t$ from the theorem.  In particular, these terms 
can be `read off' from the nonstandard proofs.  The translation $S_{\st}$ can be formalised in any reasonable\footnote{Here, a `reasonable' system is one which can prove the usual properties of finite lists, for which the presence of the exponential function suffices.} system of constructive mathematics.  In fact, the formalisation of the results in \cite{brie} 
in the proof assistant Agda (based on Martin-L\"of's constructive type theory \cite{loefafsteken}) is underway in \cite{EXCESS}.      

\medskip

In light of the results in \cite{sambon}, the following corollary (which is not present in \cite{brie}) is essential to our results.  Indeed, the following corollary expresses that we may obtain effective results as in \eqref{effewachten} from any theorem of Nonstandard Analysis which has the same form as in \eqref{bog}.  It was shown in \cite{sambon, samzoo, samzooII} that the scope of this corollary includes the Big Five systems of Reverse Mathematics and the associated `zoo' (\cite{damirzoo}).  
\begin{cor}\label{consresultcor}
If $\Delta_{\intern}$ is a collection of internal formulas and $\psi$ is internal, and
\be\label{bog}
\P + \Delta_{\intern} \vdash (\forall^{\st}\underline{x})(\exists^{\st}\underline{y})\psi(\underline{x},\underline{y}, \underline{a}), 
\ee
then one can extract from the proof a sequence of closed$^{\ref{kootsie2}}$ terms $t$ in $\mathcal{T}^{*}$ such that
\be\label{effewachten}
\textup{\textsf{E-PA}}^{\omega*} + \Delta_{\intern} \vdash (\forall \underline{x})(\exists \underline{y}\in t(\underline{x}))\psi(\underline{x},\underline{y},\underline{a}).
\ee
\end{cor}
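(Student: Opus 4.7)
The plan is to derive Corollary \ref{consresultcor} as an essentially immediate consequence of Theorem \ref{consresult}. The only thing that needs to be checked is that when one applies the syntactic translation $S_{\st}$ of \cite{brie}*{Def.~7.1} to a formula already in the normal form $\Phi(\underline{a})\equiv(\forall^{\st}\underline{x})(\exists^{\st}\underline{y})\psi(\underline{x},\underline{y},\underline{a})$ with $\psi$ internal, one recovers (modulo trivial rearrangement) the same normal form, so that Theorem \ref{consresult} yields the desired terms directly.

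First, I would recall the defining clauses of $S_{\st}$: on internal formulas the translation is the identity, i.e.\ $\psi^{S_{\st}}\equiv\forall^{\st}\underline{x}\,\exists^{\st}\underline{y}\,\psi$ with empty tuples $\underline{x},\underline{y}$. Next I would walk through the two outer quantifiers. Applying the clause for $\exists^{\st}$ to the internal matrix $\psi(\underline{x},\underline{y},\underline{a})$ gives $[(\exists^{\st}\underline{y})\psi]^{S_{\st}}\equiv\forall^{\st}\emptyset\,\exists^{\st}\underline{y}\,\psi$, which is the normal form with existential block $\underline{y}$ and empty universal block. Applying then the clause for $\forall^{\st}$ prepends $\underline{x}$ to the universal block without disturbing the existential one (since the matrix has no unbounded external quantifiers left to Skolemise), yielding $\Phi(\underline{a})^{S_{\st}}\equiv(\forall^{\st}\underline{x})(\exists^{\st}\underline{y})\psi(\underline{x},\underline{y},\underline{a})$, i.e.\ exactly $\Phi(\underline{a})$ itself.

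With this identification in hand, the hypothesis $\P+\Delta_{\intern}\vdash\Phi(\underline{a})$ is precisely the antecedent \eqref{antecedn} of Theorem \ref{consresult}. Invoking the theorem produces a sequence of closed terms $t\in\mathcal{T}^{*}$ such that
\[
\textup{\textsf{E-PA}}^{\omega*}+\Delta_{\intern}\vdash(\forall\underline{x})(\exists\underline{y}\in t(\underline{x}))\,\psi(\underline{x},\underline{y},\underline{a}),
\]
which is exactly \eqref{effewachten}. The terms $t$ are furthermore extracted algorithmically by the same procedure $\mathcal{A}$ underlying the soundness proof in \cite{brie}*{\S7}, as noted after Theorem \ref{consresult}.

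I do not anticipate a real obstacle: the whole content of the corollary is the observation that formulas of shape $(\forall^{\st}\underline{x})(\exists^{\st}\underline{y})\psi$ with internal $\psi$ are fixed by $S_{\st}$, so Theorem \ref{consresult} applies with no reshuffling of quantifiers. The only bookkeeping concern is to make sure that tuples of possibly mixed finite type are handled correctly by the clauses of $S_{\st}$, but this is built into the definition of the translation in \cite{brie} and requires no additional argument here.
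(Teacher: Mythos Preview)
Your proposal is correct and follows exactly the paper's own approach: the paper likewise observes that the corollary is immediate from Theorem~\ref{consresult} once one verifies (via the clauses (i)--(v) of \cite{brie}*{Def.~7.1}) that $[\Phi(\underline{a})]^{S_{\st}}\equiv \Phi(\underline{a})$ for $\Phi$ in normal form. Your walkthrough of the $S_{\st}$-clauses is a slightly more explicit version of what the paper dismisses as ``a tedious but straightforward verification.''
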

\begin{proof}
Clearly, if for internal $\psi$ and $\Phi(\underline{a})\equiv (\forall^{\st}\underline{x})(\exists^{\st}\underline{y})\psi(x, y, a)$, we have $[\Phi(\underline{a})]^{S_{\st}}\equiv \Phi(\underline{a})$, then the corollary follows immediately from the theorem.  
A tedious but straightforward verification using the clauses (i)-(v) in \cite{brie}*{Def.\ 7.1} establishes that indeed $\Phi(\underline{a})^{S_{\st}}\equiv \Phi(\underline{a})$.  
\end{proof}
For the rest of this paper, the notion `normal form' shall refer to a formula as in \eqref{bog}, i.e.\ of the form $(\forall^{\st}x)(\exists^{\st}y)\varphi(x,y)$ for $\varphi$ internal.  

\medskip

Finally, the previous theorems do not really depend on the presence of full Peano arithmetic.  
We shall study the following subsystems.   
\bdefi~
\begin{enumerate}
\item Let \textsf{E-PRA}$^{\omega}$ be the system defined in \cite{kohlenbach2}*{\S2} and let \textsf{E-PRA}$^{\omega*}$ 
be its definitional extension with types for finite sequences as in \cite{brie}*{\S2}. 
\item $(\QFAC^{\rho, \tau})$ For every quantifier-free internal formula $\varphi(x,y)$, we have
\be\label{keuze}
(\forall x^{\rho})(\exists y^{\tau})\varphi(x,y) \di (\exists F^{\rho\di \tau})(\forall x^{\rho})\varphi(x,F(x))
\ee
\item The system $\RCAo$ is $\textsf{E-PRA}^{\omega}+\QFAC^{1,0}$.  
\end{enumerate}
\edefi
The system $\RCAo$ is the `base theory of higher-order Reverse Mathematics' as introduced in \cite{kohlenbach2}*{\S2}.  
We permit ourselves a slight abuse of notation by also referring to the system $\textsf{E-PRA}^{\omega*}+\QFAC^{1,0}$ as $\RCAo$.
\begin{cor}\label{consresultcor2}
The previous theorem and corollary go through for $\P$ and $\textsf{\textup{E-PA}}^{\omega*}$ replaced by $\P_{0}\equiv \textsf{\textup{E-PRA}}^{\omega*}+\T_{\st}^{*} +\HAC_{\INT} +\textsf{\textup{I}}+\QFAC^{1,0}$ and $\RCAo$.  
\end{cor}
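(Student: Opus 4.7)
The plan is to inspect the soundness proof of \cite{brie}*{Theorem 7.7} (which is what underlies Theorem \ref{consresult}) and verify that it respects the arithmetic fragment in which one works. The interpretation $S_{\st}$ proceeds by induction on the length of the proof in $\P$, and at each axiom or inference rule extracts a tuple of closed terms witnessing the standard existentials. What must be checked is that when the input proof actually takes place in the weaker system $\P_{0}$, every extracted witness term already lies in the term language $\mathcal{T}^{*}$ of $\textsf{E-PRA}^{\omega*}$, rather than requiring recursors $\pmb R^{\rho}$ at higher types that are available only in full system $T$.

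First I would go axiom by axiom through $\P_{0}$. The internal axioms of $\textsf{E-PRA}^{\omega*}$ and the quantifier-free schema $\QFAC^{1,0}$ have trivial $S_{\st}$-interpretation because $\Phi^{S_{\st}}\equiv\Phi$ for internal $\Phi$; the axioms in $\T^{*}_{\st}$ (closed terms are standard; standard applied to standard is standard) are handled in \cite{brie} using only term composition, which is available in $\textsf{E-PRA}^{\omega*}$. The fragment $\textsf{I}$ is witnessed by projections and the list operations already present in $\textsf{E-PRA}^{\omega*}$ (recall Notation \ref{skim}), and $\HAC_{\INT}$ is essentially a conservative device that reads off a finite sequence of witnesses from an assumed interpretation, using only pairing and list constructors. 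None of these steps calls on recursion at type $>0$.

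Next I would address the external induction schema $\textsf{IA}^{\st}$, which is the only potentially problematic ingredient. In $\P_{0}$ the available internal induction is just that of $\textsf{E-PRA}^{\omega*}$, i.e.\ quantifier-free induction on the natural numbers. Correspondingly, the instances of $\textsf{IA}^{\st}$ inherited by $\P_{0}$ are interpreted via the primitive recursor $\pmb R^{0}$ alone, together with the list recursors, both of which are already in $\textsf{E-PRA}^{\omega*}$. Combined with the previous paragraph, this yields the analogue of Theorem~\ref{consresult}: from $\P_{0}+\Delta_{\intern}\vdash\Phi(\underline a)$ one extracts closed terms $\underline t\in\mathcal{T}^{*}$ with $\RCAo+\Delta_{\intern}\vdash\forall\underline x\,\exists\underline y\in\underline t(\underline x)\,\varphi(\underline x,\underline y,\underline a)$. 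The corollary form (the normal-form version) then follows exactly as in Corollary~\ref{consresultcor}, by the same verification that $[(\forall^{\st}\underline x)(\exists^{\st}\underline y)\psi]^{S_{\st}}\equiv(\forall^{\st}\underline x)(\exists^{\st}\underline y)\psi$ for internal $\psi$, using clauses (i)--(v) of \cite{brie}*{Def.\ 7.1}.

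The main obstacle I foresee is not conceptual but bookkeeping: one has to confirm that throughout the translation of the logical rules of $\P_0$ (modus ponens, generalisation, substitution), the combinators used to compose and substitute witness terms can be formed in $\textsf{E-PRA}^{\omega*}$ without silently invoking higher-type primitive recursion. Once this is verified — and nothing in \cite{brie}*{\S5--7} appears to use $\pmb R^{\rho}$ for $\rho>0$ in the interpretation of the axioms listed in $\P_{0}$ — the whole argument of \cite{brie}*{Theorem 7.7} descends unchanged to the pair $(\P_{0},\RCAo)$, which is exactly the statement of the corollary.
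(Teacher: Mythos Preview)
Your approach is correct and follows the same underlying idea as the paper: verify that the soundness argument of \cite{brie}*{Theorem~7.7} already lives in the weaker fragment. The paper's own proof is a one-line observation with a different emphasis: rather than tracking higher-type recursors axiom by axiom, it simply notes that the proof of \cite{brie}*{Theorem~7.7} goes through for \emph{any} fragment of $\textsf{E-PA}^{\omega*}$ containing $\textsf{EFA}$ (i.e.\ $\textsf{I}\Delta_0+\textsf{EXP}$), because the exponential function is all that is needed to manipulate finite sequences. Your detailed case analysis is a concrete unpacking of that remark; the paper's formulation has the advantage of giving a uniform criterion (presence of $\textsf{EFA}$) rather than a system-specific verification.

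One minor point: your paragraph on $\textsf{IA}^{\st}$ is superfluous, since by the definition in the corollary $\P_0$ does not include $\textsf{IA}^{\st}$ at all --- only $\T_{\st}^*$, $\HAC_{\INT}$, $\textsf{I}$, and $\QFAC^{1,0}$ are added to $\textsf{E-PRA}^{\omega*}$. This does no harm to your argument, but you can drop that step.
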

\begin{proof}
The proof of \cite{brie}*{Theorem 7.7} goes through for any fragment of \textsf{E-PA}$^{\omega{*}}$ which includes \textsf{EFA}, sometimes also called $\textsf{I}\Delta_{0}+\textsf{EXP}$.  
In particular, the exponential function is (all what is) required to `easily' manipulate finite sequences.    
\end{proof}

\subsection{The constructive system $\H$}
In this section, we define the system $\H$, the constructive counterpart of $\P$. 
The system $\textsf{H}$ was first introduced in \cite{brie}*{\S5.2}, and constitutes a conservative extension of Heyting arithmetic $\textup{\textsf{E-HA}}^{\omega} $ by \cite{brie}*{Cor.\ 5.6}.
We now study the system $\H$ in more detail.  

\medskip

Similar to Definition \ref{debs}, we define $ \textsf{E-HA}^{\omega*}_{\st} $ as $ \textsf{E-HA}^{\omega{*}} + \T^{*}_{\st} + \textsf{IA}^{\st}$, where $\textsf{E-HA}^{\omega*}$ is just $\textsf{E-PA}^{\omega*}$ without the law of excluded middle.  
Furthermore, we define
\[
\H\equiv \textup{\textsf{E-HA}}^{\omega*}_{\st}+\HAC + {\I}+\NCR+\textsf{HIP}_{\forall^{\st}}+\textsf{HGMP}^{\st},
\]
where $\HAC$ is $\HAC_{\INT}$ without any restriction on the formula, and where the remaining axioms are defined in the following definition.
\bdefi[Three axioms of $\H$]\label{flah}~
\begin{enumerate}\rm
\item $\textsf{HIP}_{\forall^{\st}}$
\[
[(\forall^{\st}x)\phi(x)\di (\exists^{\st}y)\Psi(y)]\di (\exists^{\st}y')[(\forall^{\st}x)\phi(x)\di (\exists y\in y')\Psi(y)],
\]
where $\Psi(y)$ is any formula and $\phi(x)$ is an internal formula of \textsf{E-HA}$^{\omega*}$. 
\item $\textsf{HGMP}^{\st}$
\[
[(\forall^{\st}x)\phi(x)\di \psi] \di (\exists^{\st}x')[(\forall x\in x')\phi(x)\di \psi] 
\]
where $\phi(x)$ and $\psi$ are internal formulas in the language of \textsf{E-HA}$^{\omega*}$.
\item \textsf{NCR}
\[
(\forall y^{\tau})(\exists^{\st} x^{\rho} )\Phi(x, y) \di (\exists^{\st} x^{\rho^{*}})(\forall y^{\tau})(\exists x'\in x )\Phi(x', y),
\]
where $\Phi$ is any formula of \textsf{E-HA}$^{\omega*}$
\end{enumerate}
\edefi
Intuitively speaking, the first two axioms of Definition \ref{flah} allow us to perform a number of \emph{non-constructive operations} (namely \emph{Markov's principle} and \emph{independence of premises}) 
on the standard objects of the system $\H$, provided we introduce a `Herbrandisation' as in the consequent of $\HAC$, i.e.\ a finite list of possible witnesses rather than one single witness. 
Furthermore, while $\H$ includes idealisation \textsf{I}, one often uses the latter's \emph{classical contraposition}, explaining why \textsf{NCR} is useful (and even essential) in the context of intuitionistic logic.  

\medskip

Surprisingly, the axioms from Definition \ref{flah} are exactly what is needed to convert nonstandard definitions (of continuity, integrability, convergence, et cetera) into the normal form $(\forall^{\st}x)(\exists^{\st}y)\varphi(x, y)$ for internal $\varphi$, as is clear from e.g.\ Section \ref{CTT}.
The latter normal form plays an equally important role in the constructive case as in the classical case by the following theorem.  
\begin{thm}\label{consresult2}
If $\Delta_{\intern}$ is a collection of internal formulas, $\varphi$ is internal, and
\be\label{antecedn3}
\textup{\textsf{H}} + \Delta_{\intern} \vdash \forallst \tup x \, \existsst \tup y \, \varphi(\tup x, \tup y, \tup a), 
\ee
then one can extract from the proof a sequence of closed terms $t$ in $\mathcal{T}^{*}$ such that
\be\label{consequalty3}
\textup{\textsf{E-HA}}^{\omega*} + \Delta_{\intern} \vdash\  \forall \tup x \, \exists \tup y\in \tup t(\tup x)\ \varphi(\tup x,\tup y, \tup a).
\ee
\end{thm}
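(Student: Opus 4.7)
The plan is to mirror the argument used for Theorem \ref{consresult} (which cites \cite{brie}*{Theorem 7.7}), but invoking instead the soundness theorem for the intuitionistic system $\H$ established in \cite{brie}*{\S5}. Concretely, $\H$ admits a nonstandard variant of G\"odel's Dialectica (or modified realizability) interpretation, call it $(\cdot)^{S_{\st}}$, together with a soundness theorem: if $\H + \Delta_{\intern}\vdash \Phi(\underline{a})$ and $\Phi(\underline{a})^{S_{\st}}\equiv (\forall^{\st}\underline{x})(\exists^{\st}\underline{y})\varphi_{S_{\st}}(\underline{x},\underline{y},\underline{a})$, then closed terms $t\in\T^{*}$ can be read off the proof such that $\textsf{E-HA}^{\omega*}+\Delta_{\intern}\vdash (\forall \underline{x})(\exists \underline{y}\in t(\underline{x}))\,\varphi_{S_{\st}}(\underline{x},\underline{y},\underline{a})$. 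The extra axioms $\HAC$, $\textsf{I}$, $\NCR$, $\textsf{HIP}_{\forall^{\st}}$ and $\textsf{HGMP}^{\st}$ present in $\H$ are exactly those whose interpretations are realised by canonical closed terms, which is why they can be added on top of $\textsf{E-HA}^{\omega*}$ without breaking the extraction.

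Given this soundness theorem, the proof reduces to an inspection of how $S_{\st}$ acts on formulas that are already in the normal form $(\forall^{\st}\underline{x})(\exists^{\st}\underline{y})\varphi(\underline{x},\underline{y},\underline{a})$ with $\varphi$ internal. I would proceed clause by clause through the definition of $S_{\st}$ in \cite{brie}*{\S5} (the intuitionistic analogue of \cite{brie}*{Def.\ 7.1}): internal formulas are interpreted as themselves; the clauses for $\forall^{\st}$ and $\exists^{\st}$ add, respectively, a standard universal and a standard existential quantifier on the outside; and since there is no implication nesting outside the internal matrix, none of the more delicate clauses (implication, negation, $\forall$ over a nonstandard witness) are triggered. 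A straightforward induction on this structure yields $[(\forall^{\st}\underline{x})(\exists^{\st}\underline{y})\varphi(\underline{x},\underline{y},\underline{a})]^{S_{\st}}\equiv (\forall^{\st}\underline{x})(\exists^{\st}\underline{y})\varphi(\underline{x},\underline{y},\underline{a})$, in exact parallel with the verification carried out at the end of the proof of Corollary \ref{consresultcor}.

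Combining these two ingredients gives the theorem: from the hypothesis \eqref{antecedn3}, the soundness theorem for $\H$ extracts closed terms $\underline{t}\in\T^{*}$ with $\textsf{E-HA}^{\omega*}+\Delta_{\intern}\vdash (\forall\underline{x})(\exists \underline{y}\in t(\underline{x}))\,\varphi(\underline{x},\underline{y},\underline{a})$, which is \eqref{consequalty3}. The only real obstacle is ensuring that the soundness theorem applies to formulas that carry the \emph{internal} hypotheses $\Delta_{\intern}$ as side conditions; this is handled exactly as in \cite{brie}*{Theorem 7.7} by observing that internal formulas are fixed points of $S_{\st}$ and so any proof from $\Delta_{\intern}$ yields, after interpretation, a proof in $\textsf{E-HA}^{\omega*}$ still using $\Delta_{\intern}$ as assumptions. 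No further fragment of $\H$ beyond what is covered by \cite{brie} is needed.
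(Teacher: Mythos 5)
Your proposal is correct and takes essentially the same route as the paper, which simply cites the soundness theorem for $\H$ (\cite{brie}*{Theorem 5.9}); your additional verification that normal forms are fixed points of the interpretation parallels what the paper does explicitly only in the classical Corollary \ref{consresultcor}. (One small imprecision: the interpretation used for $\H$ in \cite{brie} is the nonstandard Dialectica interpretation $D_{\st}$, not the Shoenfield-style $S_{\st}$ used for $\P$, and it is not a modified realizability — but this does not affect the structure of your argument.)
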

\begin{proof}
Immediate by \cite{brie}*{Theorem 5.9}.  
\end{proof}
The proofs of the soundness theorems in \cite{brie}*{\S5-7} provide an algorithm $\mathcal{B}$ to obtain the term $t$ from the theorem.  
Finally, we point out one very useful principle to which we have access.  
\begin{thm}\label{doppi}
The systems $\P, \H$, and $\P_{0}$ prove \emph{overspill}, i.e.\
\be\tag{\textsf{OS}}
(\forall^{\st}x^{\rho})\varphi(x)\di (\exists y^{\rho})\big[\neg\st(y)\wedge \varphi(y)  \big],
\ee
for any internal formula $\varphi$.
\end{thm}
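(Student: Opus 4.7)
The plan is to derive overspill via a single application of the idealisation axiom \textsf{I}, which is available in all three of $\P$, $\H$, and $\P_{0}$. The auxiliary ingredient is the meta-level fact that every finite type $\rho \in \pmb{T}^{*}$ admits infinitely many pairwise distinct closed terms $t_{0}, t_{1}, t_{2}, \ldots$ (proved by a routine induction on the structure of $\rho$, taking e.g.\ $t_{n} := \lambda \underline{u}.n$); each such $t_{n}$ is standard by clause~2 of Definition~\ref{debs}.

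I would apply \textsf{I} with $\sigma = \tau = \rho$ to the internal formula $\psi(z^{\rho}, y^{\rho}) \equiv \varphi(y) \wedge y \neq_{\rho} z$, which yields
\[
(\forall^{\st} x^{\rho^{*}})(\exists y^{\rho})(\forall z \in x)\big[\varphi(y) \wedge y \neq z\big] \;\di\; (\exists y^{\rho})(\forall^{\st} z^{\rho})\big[\varphi(y) \wedge y \neq z\big].
\]
To verify the antecedent, fix a standard $x^{\rho^{*}}$. Among the $|x|+1$ pairwise distinct standard elements $t_{0}, \ldots, t_{|x|}$ of type $\rho$, at least one (call it $y$) lies outside $x$; the hypothesis $(\forall^{\st} x^{\rho})\varphi(x)$ then supplies $\varphi(y)$, so $\psi(z, y)$ holds for every $z \in x$.

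Modus ponens now produces $y^{\rho}$ with $\varphi(y)$ and $(\forall^{\st} z^{\rho})(y \neq z)$. If $y$ were standard, instantiating $z := y$ in the second conjunct would give the absurdity $y \neq y$; hence $\neg\st(y)$, and the conclusion $(\exists y^{\rho})[\neg\st(y) \wedge \varphi(y)]$ follows. The extraction of $\neg\st(y)$ proceeds by refutation rather than classical reductio, so the derivation is intuitionistically valid and works verbatim in $\H$; and since only \textsf{I}, $\T^{*}_{\st}$, and equality at finite types are invoked, it works equally in $\P_{0}$. The main subtlety is the auxiliary infinitude-of-closed-terms fact underpinning the verification of the antecedent --- everything else is a mechanical unpacking of \textsf{I}.
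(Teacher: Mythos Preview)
Your argument is essentially the standard derivation of overspill from Idealisation, which is what the cited reference \cite{brie}*{Prop.~3.3} carries out (the paper itself offers no proof beyond that citation). For the classical systems $\P$ and $\P_{0}$ your proof is correct.

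There is, however, a gap in the intuitionistic case $\H$. Your verification of the antecedent of $\textsf{I}$ invokes an abstract pigeonhole argument: among the $|x|+1$ pairwise distinct objects $t_0,\dots,t_{|x|}$, at least one lies outside the length-$|x|$ list $x$. Intuitionistically this step requires deciding, for each $i$ and $j$, whether $t_i =_\rho x(j)$; but equality at a higher type $\rho$ is not decidable in $\textsf{E-HA}^{\omega*}$, so one cannot effectively locate the witnessing $t_i$. (Already for $|x|=1$: from $t_0 \neq_\rho t_1$ alone one cannot intuitionistically infer $t_0 \neq_\rho x(0) \vee t_1 \neq_\rho x(0)$.) So the claim that the derivation ``works verbatim in $\H$'' is not justified as written.

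The repair is easy and exploits the specific shape of your terms. Fix a closed term $e:\rho\to 0$ with $e(t_n)=n$ (e.g.\ $e(f)=f(0,\dots,0)$ when $\rho$ ends in type $0$, or $e(s)=|s|$ for sequence types), set $M:=1+\max_{j<|x|}e(x(j))$, and put $y:=t_M$. Then $M$ and $y$ are standard, so $\varphi(y)$ holds by hypothesis, and $e(y)=M>e(x(j))$ gives $y\neq_\rho x(j)$ for every $j$ constructively via decidable equality at type $0$. With this adjustment the argument goes through in $\H$ as well.
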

\begin{proof}
See \cite{brie}*{Prop.\ 3.3}.  
\end{proof}
In conclusion, we have introduced the systems $\H$, $\P$, which are conservative extensions of Peano and Heyting arithmetic with fragments of Nelson's internal set theory.  
We have observed that central to the conservation results (Corollary~\ref{consresultcor} and Theorem~\ref{consresult}) is the normal form $(\forall^{\st}x)(\exists^{\st}y)\varphi(x, y)$ for internal $\varphi$.  

\section{Nonstandard Analysis and qualitative information}\label{dva}
We list examples of applications of Nonstandard Analysis in which the latter is explicitly used to model qualitative phenomena.  
\begin{enumerate}
\item Raiman (\cites{raiman1, raiman2}) introduces a formal language \emph{FOG} for reasoning with qualitative notions `close', `negligible', and `same order of magnitude'.  Raiman proves \emph{FOG} to be validated in Robinson's \emph{Non-standard Analysis} (\cite{robinson1}).  The system \emph{FOG} has been used in economics and circuit design (\cites{raimanneke, baaiman})  
\item Weld studies the perturbation technique \emph{exaggeration} in \cite{welden} by means of Nonstandard Analysis.  In particular, he uses unlimited and infinitesimal values to study the limit behaviour for `large' and `small' parameter values. 
\item Davis presents a system based on Nonstandard Analysis for reasoning with qualitative notions like `small', `large', and `medium'. Results in dynamical systems and differential equations are obtained (\cite{davis13370})
\item Suenaga et al provide deductive verification framework of signals based on Nonstandard Analysis in \cites{seut}.  Rather than using approximations up to some large \emph{finite} precision, they use a correct-up-to-infinitesimals approximation using unlimited precision. 
\item Vopenka (\cite{vopsub}) proposes the use of various nonstandard structures (inside his \emph{alternative set theory} \textsf{AST}) to model vague phenomena.
\item Tzouvaras (\cite{toufke}) proposes the use of Nonstandard Analysis to model vague notions like `similarity' and `small'.  
\end{enumerate}
In light of the previous list, we hope the reader is convinced that Nonstandard Analysis \emph{can} be used to model qualitative notions.  
The usual caveat applies:  We do not claim that this modelling is the best or even accurate; we merely point out that people have used Nonstandard Analysis 
for this purpose in practice.  

\section{Some theorems relating to term extraction}
\begin{thm}\label{nogwelconsenal}
The system $\P$ proves that a normal form can be derived from an implication between normal forms. 
\end{thm}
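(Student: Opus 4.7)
Let $\Phi \equiv (\forall^{\st}x)(\exists^{\st}y)\varphi(x,y)$ and $\Psi \equiv (\forall^{\st}u)(\exists^{\st}v)\psi(u,v)$ be normal forms, with $\varphi$ and $\psi$ internal. The plan is to manipulate $\Phi \to \Psi$ using $\HAC_{\INT}$ together with classical quantifier gymnastics until it becomes an equivalent normal form. Since $\P$ contains classical logic, I may freely use De Morgan together with the standard equivalences $[(\exists^{\st}F)A(F)] \to B \leftrightarrow (\forall^{\st}F)[A(F) \to B]$ (valid since $F$ does not occur in $B$) and $\neg(\forall^{\st}x)A(x) \leftrightarrow (\exists^{\st}x)\neg A(x)$.

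The crucial move is to replace the antecedent $\Phi$ by an equivalent statement whose outer standard quantifier is existential. By $\HAC_{\INT}$, $\Phi$ implies $(\exists^{\st}F)(\forall^{\st}x)(\exists y \in F(x))\varphi(x,y)$. The converse implication holds inside $\P$ because the basic axioms of Definition~\ref{debs} ensure that $F(x)$ is standard whenever $F$ and $x$ are, while $\textsf{IA}^{\st}$ shows every $i \le |F(x)|$ is standard, so any witness $y = F(x)(i)$ is itself standard. Writing $\varphi'(x, F) \equiv (\exists y \in F(x))\varphi(x,y)$, which is internal because it is a bounded existential over an internal formula, we obtain
\[
\P \vdash \Phi \leftrightarrow (\exists^{\st}F)(\forall^{\st}x)\varphi'(x,F).
\]

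Substituting this into $\Phi \to \Psi$ and pulling all standard quantifiers to the front by the classical equivalences above yields
\[
(\forall^{\st}F)(\forall^{\st}u)\bigl[(\forall^{\st}x)\varphi'(x,F) \to (\exists^{\st}v)\psi(u,v)\bigr].
\]
The inner implication is in turn classically equivalent to $(\exists^{\st}x)\neg\varphi'(x,F) \vee (\exists^{\st}v)\psi(u,v)$, which combines into the single standard-existential block $(\exists^{\st}x)(\exists^{\st}v)[\varphi'(x,F) \to \psi(u,v)]$. Hence $\Phi \to \Psi$ is $\P$-equivalent to the normal form
\[
(\forall^{\st}F)(\forall^{\st}u)(\exists^{\st}x)(\exists^{\st}v)\bigl[\varphi'(x,F) \to \psi(u,v)\bigr],
\]
whose matrix is internal, as required. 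The only non-routine ingredient is the converse of $\HAC_{\INT}$ used in the second paragraph; it rests on the lemma that elements of standard finite sequences are standard, for which the basic axioms of Definition~\ref{debs} together with a short external induction on the length suffice. Every other step is a purely propositional quantifier shift that requires no further axioms beyond classical logic.
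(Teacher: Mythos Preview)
Your proof is correct, but it takes a somewhat different route than the paper. The paper never invokes $\HAC_{\INT}$: it simply observes that for any standard $\zeta$, the formula $(\forall^{\st}x)\varphi(x,\zeta(x))$ implies $\Phi$ (because $\zeta(x)$ is standard for standard $x$ by the basic axioms), so that $\Phi\to\Psi$ yields $(\forall^{\st}\zeta)\big[(\forall^{\st}x)\varphi(x,\zeta(x))\to\Psi\big]$, and then the same classical quantifier shifts you perform produce the normal form $(\forall^{\st}\zeta,z)(\exists^{\st}w,x)[\varphi(x,\zeta(x))\to\psi(z,w)]$. This is lighter on machinery---no Herbrandised choice, no lemma about elements of standard finite sequences---but it only delivers the forward implication from $\Phi\to\Psi$ to a normal form, which is all the theorem asks for. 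Your detour through $\HAC_{\INT}$ buys you a genuine two-sided equivalence $[\Phi\to\Psi]\leftrightarrow\text{normal form}$ at the cost of the extra lemma, so your argument is a little heavier but also a little stronger.
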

\begin{proof}
Let $\varphi, \psi$ be internal and consider the following implication between normal forms:
\be\label{nora}
(\forall^{\st}x)(\exists^{\st}y)\varphi(x, y)\di (\forall^{\st}z)(\exists^{\st}w)\psi(z, w).  
\ee
Since standard functionals have standard output for standard input by Definition \ref{debs}, \eqref{nora} implies
\be\label{nora2}
(\forall^{\st}\zeta)\big[(\forall^{\st}x)\varphi(x, \zeta(x))\di (\forall^{\st}z)(\exists^{\st}w)\psi(z, w)\big].  
\ee
Bringing all standard quantifiers outside, we obtain the following normal form:
\be\label{nora3}
(\forall^{\st}\zeta, z)(\exists^{\st} w, x)\big[\varphi(x, \zeta(x))\di \psi(z, w)\big],
\ee
as the formula in square brackets is internal.
\end{proof}
It is an interesting exercise to establish the previous theorem for $\H$ in the stead of $\P$.  
\begin{thm}\label{hujiku} 
For internal $\varphi$, the system $\P$ proves that $(\forall \eps\approx 0)(\forall^{\st}x)(\exists^{\st}y^{\tau})\varphi(x, y, \eps)$ is equivalent to a normal form.   
\end{thm}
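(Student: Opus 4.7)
The plan is to rewrite the given formula using classical prenexing and the classical equivalence $[(\forall^{\st}k)A(k)\to(\exists^{\st}y)B(y)]\leftrightarrow(\exists^{\st}k,y)[A(k)\to B(y)]$ until we obtain a block of the shape $(\forall^{\st})(\forall \eps)(\exists^{\st})\,\text{(internal)}$, and then apply the contraposition \eqref{criv} of Idealisation \textsf{I} to swap $(\forall \eps)$ past the $(\exists^{\st})$-block, which is the only step that is not a purely logical manipulation.

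In detail, first I would unfold the shorthand $\eps\approx 0$ to $(\forall^{\st}k^{0})(|\eps|<\tfrac{1}{k})$, so the statement reads
\[
(\forall \eps)\Big[(\forall^{\st}k)(|\eps|<\tfrac1k)\di (\forall^{\st}x)(\exists^{\st}y^{\tau})\varphi(x,y,\eps)\Big].
\]
Using classical logic I would pull the $(\forall^{\st}x)$ out of the consequent and commute it with the outer $(\forall \eps)$. Then, since $k$ does not occur in the consequent and (the domain being nonempty) classical logic gives $(\forall^{\st}k)A(k)\to C \leftrightarrow (\exists^{\st}k)(A(k)\to C)$, and similarly one can pull $(\exists^{\st}y)$ leftward past an implication, I obtain
\[
(\forall^{\st}x)(\forall \eps)(\exists^{\st}k^{0},y^{\tau})\Big[|\eps|<\tfrac1k \di \varphi(x,y,\eps)\Big],
\]
where the formula in square brackets is internal since $\varphi$ is.

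Now comes the key step. The inner subformula $\psi_0(x,k,y,\eps):=[|\eps|<\tfrac1k \di \varphi(x,y,\eps)]$ is internal, so applying \eqref{criv} to $(\forall \eps)(\exists^{\st}k,y)\psi_0(x,k,y,\eps)$ produces a standard finite list $w$ of pairs such that
\[
(\forall^{\st}x)(\exists^{\st}w)(\forall \eps)(\exists (k,y)\in w)\Big[|\eps|<\tfrac1k \di \varphi(x,y,\eps)\Big].
\]
Everything following $(\exists^{\st}w)$ is internal, so this is a normal form. I expect this idealisation step to be the main point, since it requires packaging the two standard existentials as a single tuple (formally a pair type, or equivalently two successive applications of \textsf{I}) and keeping the matrix internal.

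For the converse direction, I would argue inside $\P$ as follows. Assume the normal form and fix $\eps\approx 0$ and standard $x$; obtain a standard finite $w$. Let $K$ be the maximum of the first components of $w$; since $w$ is standard and the ``max'' functional is a term of the language applied to standard input, $K$ is standard by the basic axioms of Definition \ref{debs}, so $|\eps|<\tfrac1K$ and hence $|\eps|<\tfrac1k$ for every $(k,y)\in w$. Then for the witness $(k,y)\in w$ provided by the normal form, the implication forces $\varphi(x,y,\eps)$. Finally, elements of standard finite sequences are standard in $\P$ (a standard list has standard length, standard numbers form an initial segment by \textsf{IA}$^{\st}$, and projection is a standard functional, so $y$ is standard), giving $(\exists^{\st}y)\varphi(x,y,\eps)$ as required.
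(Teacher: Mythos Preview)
Your proposal is correct and follows essentially the same approach as the paper: unfold $\eps\approx 0$, prenex the standard quantifiers to obtain $(\forall^{\st}x)(\forall \eps)(\exists^{\st}k,y)[\text{internal}]$, and then apply the contraposition of \textsf{I} to pull the standard existentials past $(\forall\eps)$. The only cosmetic differences are that the paper keeps two separate finite lists $w^{0^{*}}$ and $z^{\tau^{*}}$ rather than a single list of pairs, and then performs one extra simplification (replacing the list $w$ by its maximum $N$) to obtain the slightly tidier normal form $(\forall^{\st}x)(\exists^{\st}z^{\tau^{*}},N)(\forall\eps)(\exists y\in z)[|\eps|<\tfrac1N\di\varphi(x,y,\eps)]$; your converse argument is also more explicit than the paper's one-line remark.
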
  
\begin{proof}
Written out in full, the initial formula from the theorem is:
\[\textstyle
(\forall \eps)\big[(\forall^{\st} k^{0})(|\eps|<\frac{1}{k})\di (\forall^{\st}x)(\exists^{\st}y^{\tau})\varphi(x, y, \eps)\big],
\]
and bringing outside all standard quantifiers as far as possible:
\[\textstyle
(\forall^{\st}x)\underline{(\forall \eps)(\exists^{\st}y^{\tau}, k^{0})\big[|\eps|<\frac{1}{k}\di \varphi(x, y, \eps)\big]},
\]
the underlined formula is suitable for \emph{Idealisation}.  Applying the latter yields
\[\textstyle
(\forall^{\st}x)(\exists^{\st}w^{0^{*}}, z^{\tau^{*}}){(\forall \eps)(\exists y^{\tau} \in z, k^{0}\in w)\big[|\eps|<\frac{1}{k}\di \varphi(x, y, \eps)\big]},
\]
and let $N^{0}$ be the maximum of all $w(i)$ for $i<|w|$.  We obtain:
\be\label{harny}\textstyle
(\forall^{\st}x)(\exists^{\st}z^{\tau^{*}}, N){(\forall \eps)(\exists y^{\tau}\in z)\big[|\eps|<\frac{1}{N}\di \varphi(x, y, \eps)\big]}.
\ee
Clearly, \eqref{harny} implies the initial formula from the theorem. 
\end{proof}
\section{Examples in Reverse Mathematics of the computational content of Nonstandard Analysis}\label{EXAKE}

\subsection{Theorems equivalent to $\ACA_{0}$}\label{AZA}
In this section, we study the \emph{monotone convergence theorem} $\MCT$, i.e.\ the statement that \emph{every bounded increasing sequence of reals is convergent}, which is equivalent to arithmetical comprehension $\ACA_{0}$ by \cite{simpson2}*{III.2.2}.
We prove an equivalence between a nonstandard version of $\MCT$ and a fragment of \emph{Transfer}.
From this nonstandard equivalence, we obtain an effective RM equivalence involving $\MCT$ and arithmetical comprehension.  

\medskip

Firstly, the nonstandard version of $\MCT$ (involving nonstandard convergence) is:
\be\label{MCTSTAR}\tag{\MCT$_{\textsf{ns}}$}
(\forall^{\st} c_{(\cdot)}^{0\di 1})\big[(\forall n^{0})(c_{n}\leq c_{n+1}\leq 1)\di (\forall N,M\in \Omega)[c_{M}\approx c_{N}]    \big], 
\ee
where `$(\forall K\in \Omega)(\dots)$' is short for $(\forall K^{0}))(\neg\st(K)\di \dots)$.  
The effective version \MCT$_{\textsf{ef}}(t)$:
\be\label{MCTSTAR22}\textstyle
(\forall c_{(\cdot)}^{0\di 1},k^{0})\big[(\forall n^{0})(c_{n}\leq c_{n+1}\leq 1)\di (\forall N,M\geq t(c_{(\cdot)})(k))[|c_{M}- c_{N}|\leq \frac{1}{k} ]   \big].
\ee
We require two equivalent (\cite{kohlenbach2}*{Prop.\ 3.9}) versions of arithmetical comprehension: 
\be\label{mu}\tag{$\mu^{2}$}
(\exists \mu^{2})\big[(\forall f^{1})( (\exists n)f(n)=0 \di f(\mu(f))=0)    \big],
\ee
\be\label{mukio}\tag{$\exists^{2}$}
(\exists \varphi^{2})\big[(\forall f^{1})( (\exists n)f(n)=0 \asa \varphi(f)=0   ) \big],
\ee
Clearly, $(\exists^{2})$ (and therefore $(\mu^{2})$) is the functional version of $\ACA_{0}$.  
We also recall the restriction of Nelson's axiom \emph{Transfer} as follows:
\be\tag{$\paai$}
(\forall^{\st}f^{1})\big[(\forall^{\st}n^{0})f(n)\ne0\di (\forall m)f(m)\ne0\big].
\ee
Denote by $\textsf{MU}(\mu)$ the formula in square brackets in \eqref{mu}.  We have the following theorem which establishes the explicit equivalence between $(\mu^{2})$ and uniform $\MCT$.  
\begin{thm}\label{sef}
We have $\P\vdash \MCT_{\ns}\asa \paai$.  From this proof, terms $s, u$ can be extracted such that $\textup{\textsf{E-PA}}^{\omega*}$ proves:
\be\label{frood}
(\forall \mu^{2})\big[\textsf{\MU}(\mu)\di \MCT_{\ef}(s(\mu)) \big] \wedge (\forall t^{1\di 1})\big[ \MCT_{\ef}(t)\di  \MU(u(t))  \big].
\ee
\end{thm}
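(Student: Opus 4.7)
The plan is to prove $\P\vdash\MCT_{\ns}\leftrightarrow\paai$ by two direct nonstandard arguments, bring both implications into the normal form $(\forall^{\st}\tup x)(\exists^{\st}\tup y)\varphi$ prescribed by Theorem \ref{nogwelconsenal}, and then invoke Corollary \ref{consresultcor} to read off the terms $s$ and $u$ with the claimed dependencies on $\mu$ and $t$ respectively.

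For the direction $\paai\Rightarrow\MCT_{\ns}$, let $c_{(\cdot)}$ be standard, bounded by $1$, and increasing, and suppose towards a contradiction that $|c_{M}-c_{N}|\geq\frac{1}{k_{0}}$ for some standard $k_{0}$ and some nonstandard $N$; by monotonicity we may assume $(\forall^{\st}N)(\exists M\geq N)(c_{M}-c_{N}\geq\frac{1}{k_{0}})$. Applying $\paai$ to the standard function deciding the internal matrix converts $(\exists M)$ into $(\exists^{\st}M)$, and then $\HAC_{\intern}$ produces a standard iterator $G$ with $(\forall^{\st}N)(c_{G(N)}-c_{N}\geq\frac{1}{k_{0}})$; iterating $G$ standardly $k_{0}+1$ times contradicts $c_{(\cdot)}\leq 1$. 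For the converse $\MCT_{\ns}\Rightarrow\paai$, let $f$ be standard with $(\forall^{\st}n)(f(n)\neq 0)$ and define the standard indicator sequence $c_{n}=0$ if $(\forall k\leq n)(f(k)\neq 0)$ and $c_{n}=1$ otherwise; it is increasing and in $[0,1]$, so $\MCT_{\ns}$ yields $c_{N}\approx c_{M}$ for all nonstandard $N,M$. If $(\exists m)(f(m)=0)$, let $m_{0}$ be least; then $m_{0}$ is nonstandard, hence so is $m_{0}-1$, but $c_{m_{0}-1}=0$ and $c_{m_{0}}=1$, contradicting $c_{m_{0}-1}\approx c_{m_{0}}$.

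Once the equivalence is established, we massage each implication into normal form. Unwinding `$\approx$' in $\MCT_{\ns}$, pushing the standard quantifiers outward, and applying \textsf{I} as in Remark \ref{leffe} and Theorem \ref{hujiku} gives the normal form $(\forall^{\st}c,k)(\exists^{\st}N)\,\Psi_{\MCT}(c,k,N)$ whose internal matrix is precisely the conclusion of $\MCT_{\ef}$ at parameters $(c,k)$ with bound $N$. Similarly, rewriting $\paai$ classically as $(\forall^{\st}f)(\forall m)(\exists^{\st}n)[f(m)=0\to f(n)=0]$ and applying \textsf{I} to the underlined subformula yields $(\forall^{\st}f)(\exists^{\st}n^{*})[(\exists m)f(m)=0\to(\exists n\in n^{*})f(n)=0]$, which after taking the maximum of the finite list $n^{*}$ matches the form of $\MU$. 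Theorem \ref{nogwelconsenal} then turns each of the two implications $\paai\to\MCT_{\ns}$ and $\MCT_{\ns}\to\paai$ into a single normal form, and Corollary \ref{consresultcor} extracts closed terms from the proofs in $\P$ of these normal forms; in the first case the Skolem functional witnessing the antecedent has exactly the type of $\mu^{2}$, so the extracted term has the shape $s(\mu)$ with $\textsf{MU}(\mu)\to\MCT_{\ef}(s(\mu))$, and in the second case the Skolem functional of the antecedent $\MCT_{\ns}$ has the type of $t^{1\to 1}$, giving $u$ with $\MCT_{\ef}(t)\to\MU(u(t))$.

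The main obstacle is the careful bookkeeping during the normal form reduction of the two implications so that the extracted terms depend on the Skolem functionals of the antecedents with the correct finite types: in particular, one must verify that the Skolem functional associated with the normal form of $\paai$ genuinely corresponds to Feferman's $\mu$ (so that the hypothesis $\textsf{MU}(\mu)$ can be fed in directly as input to $s$), and dually that the modulus functional of $\MCT_{\ns}$ has type $1\to 1$ matching that of $t$ in $\MCT_{\ef}(t)$. The underlying nonstandard proofs are otherwise routine and translate step-by-step into the extracted terms by the algorithm underlying Corollary \ref{consresultcor}.
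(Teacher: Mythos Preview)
Your proposal is correct and follows the expected route: prove the nonstandard equivalence $\paai\leftrightarrow\MCT_{\ns}$ by elementary arguments, bring each implication into normal form via Theorem~\ref{nogwelconsenal}, and extract terms via Corollary~\ref{consresultcor}, checking that the Skolem functionals of the antecedents have the types of $\mu^{2}$ and $t^{1\to 1}$ respectively. The paper itself gives no in-text proof and simply refers to \cite{sambon}*{\S4.1}, so there is nothing to compare against here; your argument is precisely the kind of proof that reference is meant to supply.

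Two minor points of bookkeeping you should tighten. In the direction $\paai\Rightarrow\MCT_{\ns}$, when you iterate the standard functional $G$ to reach a contradiction, note that the sequence is only assumed bounded above by $1$, not below by $0$; you should say explicitly that $c_{0}$ is a \emph{standard} real (since $c_{(\cdot)}$ is standard), so choosing a standard $K$ with $c_{0}+K/k_{0}>1$ is legitimate. Also, $\HAC_{\intern}$ only gives a finite list of witnesses, so spell out that monotonicity lets you replace the list by its maximum before iterating. In the converse direction your indicator-sequence argument is clean; just record that the least zero $m_{0}$ exists by internal least-number (an internal instance of induction), so that the appeal to minimality is justified inside $\P$.
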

\begin{proof}
See \cite{sambon}*{\S4.1}.  
\end{proof}
We point out \eqref{frood} is the `effective' version of the equivalence $\ACA_{0}\asa \MCT$; the former is obtained from the corresponding `nonstandard' equivalence $\paai\asa \MCT_{\ns}$.  
Note that the latter proof proceeds by contradiction.  

\medskip

Finally, while we did not emphasise this in Section \ref{RM}, Reverse Mathematics usually studies mathematical theorems \emph{formalised in second-order arithmetic}.  
The latter only involves natural numbers and sets thereof, i.e.\ continuous functions on the real numbers are \emph{indirectly} present in the form of \emph{codes} (See \cite{kohlenbach4}*{\S4}).   
Now, \eqref{frood} is clearly \emph{not} part of second-order arithmetic (as it involves objects of type two), but it is possible to obtain results in second-order arithmetic from \eqref{frood}, as explored in \cite{sambonIII}.  

\subsection{Theorems equivalent to $\ATR_{0}$ and $\FIVE$}\label{expl}
In this section, we study equivalences relating to $\ATR_{0}$ and $\FIVE$, the strongest Big Five systems.  
We shall work with the \emph{Suslin functional} $(S^{2})$, the functional version of $\FIVE$.  
\be\label{suske}
(\exists S^{2})(\forall f^{1})\big[   S(f)=_{0} 0 \asa (\exists g^{1})(\forall x^{0}) (f(\overline{g}x)\ne 0)\big]. \tag{$S^{2}$}
\ee
Feferman has introduced the following version of the Suslin functional (See e.g.\ \cite{avi2}).
\be\label{suske2}
(\exists \mu_{1}^{1\di 1})\big[(\forall f^{1})\big(    (\exists g^{1})(\forall x^{0}) (f(\overline{g}x)\ne 0)\di (\forall x^{0}) (f(\overline{\mu_{1}(f)}x)\ne 0)\big)\big], \tag{$\mu_{1}$}
\ee
where the formula in square brackets is $\MUO(\mu_{1})$.  We require another instance of \emph{Transfer}:
\be\tag{$\Paai$}
(\forall f^{1})\big[    (\exists g^{1})(\forall x^{0}) (f(\overline{g}x)\ne 0)\di  (\exists^{\st} g^{1})(\forall^{\st} x^{0}) (f(\overline{g}x)\ne 0)\big].
\ee
%
%
We first consider $\PST$, i.e.\ the statement that \emph{every tree with uncountably many paths has a non-empty perfect subtree}.   
As proved in \cite{simpson2}*{V.5.5}, we have $\PST\asa \ATR_{0}$ and a uniform version of $\PST$ is equivalent to the Suslin functional by \cite{yamayamaharehare}*{Theorem~4.4}.  
Now, $\PST$ has the following nonstandard and uniform versions. 
\begin{thm}[$\PST_{\ns}$]For all standard trees $T^{1}$, there is standard $P^{1}$ such that 
\[
(\forall f_{(\cdot)}^{0\di 1})(\exists f\in T)(\forall n)(f_{n}\ne_{1} f) \di \textup{$P$ is a non-empty perfect subtree of $T$}.
\]
\end{thm}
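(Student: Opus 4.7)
The plan is to prove $\PST_{\ns}$ inside the system $\P + \Paai$, imitating the classical Cantor--Bendixson construction of the perfect kernel while using $\Paai$ to guarantee that the produced subtree $P$ is standard. Given a fixed standard tree $T^1$, I would define the \emph{thick part} $P := \{s \in T : T_s \text{ has uncountably many paths}\}$, where $T_s$ consists of all nodes of $T$ comparable to $s$. Classically, $P$ is either empty (precisely when $T$ has at most countably many paths) or a non-empty perfect subtree of $T$, so this single $P$ witnesses the required implication directly, with no further construction needed.

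The key point is that membership of $s$ in $P$ is a $\Sigma_1^1$ condition in the parameters $T$ and $s$. Applying $\Paai$ together with the standardness axioms from Definition \ref{debs} and $\HAC_{\INT}$, I would extract a standard functional $\Phi^{1\di 1}$ such that $\Phi(T)$ is a standard code for $P$: on standard inputs $\Paai$ converts the $\Sigma_1^1$ predicate into an internal question about a standard witness, and $\HAC_{\INT}$ then supplies a standard Herbrandised selector which can be collapsed to the single-valued $\Phi$. Because $T$ is standard and $\Phi$ is a standard functional, $\Phi(T)$ is standard by the third axiom of Definition \ref{debs}, so $P := \Phi(T)$ is the standard subtree we need. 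The remaining task, verifying that whenever $T$ has uncountably many paths the set $P$ is a non-empty perfect subtree, is a purely internal assertion reducing to the classical perfect set theorem for closed sets in Baire space, provable in $\textsf{E-PA}^\omega + (\mu_1)$.

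The main obstacle is avoiding a transfinite Cantor--Bendixson iteration, which would otherwise require maintaining standardness along an $\omega_1^{\textup{CK}}$-like process far beyond the reach of the basic standardness apparatus of $\P$. Defining $P$ in one shot via the $\Sigma_1^1$ predicate sidesteps this entirely, but shifts the burden onto $\Paai$ and the internal availability of $(\mu_1)$; hence care is needed to verify that the classical fact ``the thick part of a tree is either empty or a non-empty perfect subtree'' is indeed derivable at the $\FIVE$-level of strength implicit in $\Paai$ via the term-extraction correspondence illustrated after Theorem \ref{sef}. A secondary technical point is that $\Phi$ must behave correctly uniformly in \emph{all} standard inputs $T$, which is handled by a single application of $\HAC_{\INT}$ to the normal form obtained after $\Paai$-transfer, rather than by a separate pointwise construction for each standard $T$.
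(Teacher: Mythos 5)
The statement you were asked about is not something the paper proves at all: the \texttt{thm} environment is used there only to \emph{display} the sentence being named $\PST_{\ns}$, in exactly the same way that $\MCT_{\ns}$, $\RIE_{\ns}$, and $\PST_{\ef}(t)$ are introduced. The sentence immediately preceding it reads ``Now, $\PST$ has the following nonstandard and uniform versions,'' so $\PST_{\ns}$ is a \emph{labelled principle}, not an asserted theorem of $\P$. The actual theorem of the paper involving it is Theorem~\ref{sef2334}, namely $\P\vdash\PST_{\ns}\leftrightarrow\Paai$, and even that proof is deferred to an external reference (\cite{sambon}, Section 4.5), so there is no in-paper argument to compare against. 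You rightly sensed this implicitly --- by working in $\P+\Paai$ you are attempting one direction of the equivalence, which is the correct framing; but it should be stated that the bare statement is unprovable in $\P$, and that the paper also needs the converse $\PST_{\ns}\Rightarrow\Paai$ (which, as the paper remarks, goes by contradiction).

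On the substance of your sketch of $\Paai\Rightarrow\PST_{\ns}$: the Cantor--Bendixson thick part is classically sound but is a detour here, and the standardness bookkeeping has a real gap. The axiom $\HAC_{\INT}$ cannot manufacture the standard $P$ for you: its antecedent is precisely a normal form $(\forall^{\st}T)(\exists^{\st}P)\varphi(T,P)$, so you must \emph{already have} $\PST_{\ns}$ before $\HAC_{\INT}$ can package it into a uniform Herbrandised selector; it is a post-processing step, not an engine producing $P$. The engine has to be $\Paai$ itself, and the cleaner route is to observe that ``$T$ has a non-empty perfect subtree $P'$ together with a path $p'$ through it'' is a $\Sigma^1_1$ statement in the standard parameter $T$; $\Paai$ (together with idealisation/overspill) is exactly what lets one pull a \emph{standard} witness pair $(P',p')$ through a $\Sigma^1_1$ existential --- this is why the paper builds the explicit path $p'$ into the statement as a ``technicality''. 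Your thick-part route instead obliges you to show that the $\Sigma^1_1$-defined set $P=\{s\in T: T_s\text{ has uncountably many paths}\}$ is a standard object, which is strictly more than $\Paai$ gives: $\Paai$ yields standard witnesses for $\Sigma^1_1$ existentials, not standardness of $\Sigma^1_1$-definable sets in general. That stronger move would require a standard $\mu_1$-functional, whose standardness is precisely what is at issue and is not covered by Definition~\ref{debs} (which concerns only closed terms of $\textsf{E-PA}^{\omega*}$, of which $\mu_1$ is not one).
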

\begin{thm}[$\PST_{\ef}(t)$]
For all trees $T^{1}$, we have
\[
(\forall f_{(\cdot)}^{0\di 1})(\exists f\in T)(\forall n)(f_{n}\ne_{1} f) \di \textup{$t(T)$ is a non-empty perfect subtree of $T$}.
\]
\end{thm}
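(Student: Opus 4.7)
The plan is to mirror the template of Theorem \ref{sef}: first establish, in $\P$, the nonstandard equivalence $\PST_{\ns}\asa\Paai$, and then invoke Corollary~\ref{consresultcor} to extract from this proof the explicit terms $s,u$ such that $\textup{\textsf{E-PA}}^{\omega*}$ proves
\[
(\forall \mu_1)\big[\MUO(\mu_1)\di \PST_{\ef}(s(\mu_1))\big]\ \wedge\ (\forall t)\big[\PST_{\ef}(t)\di \MUO(u(t))\big].
\]
In particular, the final statement $\PST_{\ef}(t)$ is to be understood as witnessed by $t=s(\mu_1)$, so that showing its provability reduces to producing a nonstandard proof of $\PST_{\ns}$ from $\Paai$ amenable to term extraction.

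For the forward direction $\Paai\di\PST_{\ns}$ in $\P$: fix a standard tree $T$ and assume the antecedent. Since $\Paai$ reduces any $\Sigma^1_1$ or $\Pi^1_1$ statement about a standard parameter to its standard counterpart, we can standardly decide, for each node $\sigma$, both ``$T_\sigma$ has an infinite path'' and ``$T_\sigma$ has two incomparable infinite paths''. Using $\HAC_{\INT}$ we then obtain a \emph{standard} perfect pruning $P$ of $T$ consisting of those $\sigma$ above which splitting occurs on both sides cofinally; non-emptiness and perfectness of $P$ follow standardly from the antecedent using Transfer, as the assumption that $T$'s paths are not exhausted by any countable list is a $\Pi^1_2$-statement whose $\Pi^1_1$-kernel is accessible to $\Paai$. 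This yields $\PST_{\ns}$.

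For the converse $\PST_{\ns}\di\Paai$: adapt Simpson's reversal $\PST\di\ATR_0$ (cf.\ \cite{simpson2}*{V.5}), implemented in nonstandard form. Given standard $f^1$ with $(\forall^{\st} n)f(n)\ne 0$, suppose by contradiction that $(\exists m)f(m)=0$. Build a standard tree $T_f$ whose path set is constructed so that a standard perfect subtree would force the set $\{n:f(n)=0\}$ to be standardly enumerated, hence empty by the standardness axioms of $\P$ from Definition \ref{debs}. Applying $\PST_{\ns}$ to $T_f$ (whose ``uncountably many paths'' hypothesis is arranged to hold vacuously in $T_f$ via an idealisation argument) then contradicts $(\exists m)f(m)=0$.

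Both implications can be brought to the normal form $(\forall^{\st}\underline x)(\exists^{\st}\underline y)\varphi$ using Theorems \ref{nogwelconsenal} and \ref{hujiku}, so that Corollary~\ref{consresultcor} yields the desired terms $s$ and $u$, completing the extraction of $\PST_{\ef}(s(\mu_1))$ and the reversal to $\MUO(u(t))$. The \textbf{main obstacle} will be the reverse direction: the classical reversal $\PST\di\ATR_0$ is delicate, and translating it to produce $\Paai$ (rather than $\ATR_0$-strength) while respecting standardness requires a careful choice of the coding tree $T_f$ so that ``standardness of the perfect subtree'' precisely captures the $\Sigma^1_1$-witness we need; a secondary difficulty is verifying that the formulas remain suitable for \textsf{I} after the coding, which is where Theorem \ref{hujiku} (applied to the infinitesimal-free but $\Pi^1_1$-style quantifier prefix) plays its role.
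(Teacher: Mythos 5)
The statement labeled $\PST_{\ef}(t)$ is not a theorem that the paper proves. It is the \emph{definition} of a parameterised principle (the ``effective'' version of the perfect set theorem), with $t^{1\di 1}$ occurring as a free variable — exactly parallel to $\RIE_{\ef}(t)$ and $\RIE_{\pw}(s)$ earlier in the paper. There is no proof of it, and there is nothing to prove: it is a name, not an assertion. What you have sketched is instead a proof of Theorem \ref{sef2334}, for which the paper supplies no in-text argument but merely cites \cite{sambon}*{\S4.5}.

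Even taking your sketch as an attempt at Theorem \ref{sef2334}, there are two substantive gaps. First, you conflate $\Paai$ with $\paai$. In your reversal you start from ``standard $f^1$ with $(\forall^{\st}n)f(n)\ne 0$, suppose $(\exists m)f(m)=0$'' — that is precisely the hypothesis/negation pattern for $\paai$ (arithmetical transfer, corresponding to $(\mu^2)/\ACA_0$), whereas the target in this section is $\Paai$, i.e.\ transfer for $\Pi^1_1$-formulas involving a genuine second-order existential $(\exists g^1)(\forall x^0)(f(\overline gx)\ne 0)$. The two sit at vastly different levels of the hierarchy and cannot be interchanged. Second, you propose adapting Simpson's reversal $\PST\di\ATR_0$, but that reversal only reaches $\ATR_0$-strength. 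The paper explicitly notes, immediately above the statement you are considering, that the relevant fact is the Sakamoto--Yamazaki result (\cite{yamayamaharehare}*{Theorem 4.4}): a \emph{uniform} version of $\PST$ is equivalent to the Suslin functional, i.e.\ to $\FIVE$-strength. It is the uniformity already built into $\PST_{\ns}$ (a \emph{standard} perfect subtree produced from a \emph{standard} tree) and into $\PST_{\ef}(t)$ (a functional $t$) that delivers the jump to $(\mu_1)/\MUO$; lifting the non-uniform $\ATR_0$-reversal would undershoot the target.
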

As a technicality, we require that $P$ as in the previous two principles consists of a pair $(P', p')$ such that $P'$ is a perfect subtree of $T$ such that $p'\in P'$.      
\begin{thm}\label{sef2334}
We have $\P\vdash \PST_{\ns}\asa \Paai$.  From the latter, terms $s, u$ can be extracted such that $\textup{\textsf{E-PA}}^{\omega*}$ proves:
\be\label{frood3}
(\forall \mu_{1})\big[\textsf{\MUO}(\mu_{1})\di \PST_{\ef}(s(\mu_{1})) \big] \wedge (\forall t^{1\di 1})\big[ \PST_{\ef}(t)\di  \MUO(u(t))  \big].
\ee
\end{thm}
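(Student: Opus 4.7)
The plan is to mimic the structure of Theorem \ref{sef}: first establish the nonstandard equivalence $\P \vdash \PST_{\ns} \asa \Paai$ by two direct arguments in Nonstandard Analysis, then bring both implications into normal form and apply Corollary \ref{consresultcor} to read off the terms $s$ and $u$.

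For $\Paai \di \PST_{\ns}$, I fix a standard tree $T^{1}$ satisfying the antecedent of $\PST_{\ns}$, so that no (standard) sequence of elements of $T$ exhausts its paths. The natural move is to run a Cantor--Bendixson style construction internally: $\Paai$ lets us, for each standard $\sigma \in T$, decide in a standard way whether the subtree $T_{\sigma}$ above $\sigma$ contains a path, and hence to extract a standard perfect kernel $P$ of $T$. The hypothesis of $\PST_{\ns}$ ensures the kernel is non-empty, since otherwise a standard sequence listing the isolated paths would contradict the antecedent. For the converse $\PST_{\ns} \di \Paai$, suppose $\Paai$ fails: some standard $f_{0}^{1}$ has $(\exists g)(\forall x)(f_{0}(\overline{g}x) \ne 0)$ but no \emph{standard} such $g$. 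Consider the standard tree $T_{f_{0}} = \{\sigma : f_{0}(\overline{\sigma}|\sigma|) \ne 0\}$. It has a (nonstandard) path by assumption; using overspill (Theorem \ref{doppi}) together with the failure of standard transfer, one argues that any candidate standard perfect subtree of $T_{f_{0}}$ would yield a standard path, contradicting our assumption. A slight augmentation of $T_{f_{0}}$ (e.g.\ by padding to ensure uncountability of paths in the nonstandard sense) verifies the antecedent of $\PST_{\ns}$, so the conclusion of $\PST_{\ns}$ fails, contradiction.

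For the effective part, note that $\PST_{\ns}$ and $\Paai$ are each of the shape $(\forall^{\st}x)(\exists^{\st}y)\varphi$ up to resolving the external notions (standardness of trees, subtrees, and paths); Theorem \ref{hujiku} handles the infinitesimal quantifiers and Theorem \ref{nogwelconsenal} shows that the implications $\PST_{\ns} \di \Paai$ and $\Paai \di \PST_{\ns}$ admit normal forms as well. Applying Corollary \ref{consresultcor} to each nonstandard implication produces closed terms of $\T^{*}$, from which the desired $s$ (taking a $\mu_{1}$ with $\MUO(\mu_{1})$ and returning a functional witnessing $\PST_{\ef}$) and $u$ (taking an extractor $t$ for $\PST_{\ef}$ and returning a functional with $\MUO$) are defined by the usual maximum-of-finite-list trick used throughout Sections~\ref{PTT}--\ref{HTT}.

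The main obstacle will be the nonstandard proof of $\Paai \di \PST_{\ns}$: carrying out Cantor--Bendixson inside $\P$ requires a careful bookkeeping of \emph{which} steps demand $\Pi_{1}^{1}$-transfer and which merely demand $\HAC_{\INT}$ or idealisation, so that the resulting perfect kernel can be guaranteed to be standard. The converse direction is comparatively routine once the right tree $T_{f_{0}}$ is chosen, but some care is needed to make sure the failure of $\PST_{\ns}$ (rather than merely of an effective version) is witnessed, so that the argument passes cleanly through the normal-form translation and yields an extractable term $u$.
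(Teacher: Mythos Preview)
The paper's own ``proof'' is simply a pointer to \cite{sambon}*{\S4.5}, so there is no in-paper argument to compare against in detail. Your overall template---prove the nonstandard equivalence in $\P$, bring each direction to normal form via Theorem~\ref{nogwelconsenal}, then invoke Corollary~\ref{consresultcor}---is exactly the methodology used for Theorem~\ref{sef} and is the intended route.

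That said, the reverse direction $\PST_{\ns}\di\Paai$ as you sketch it has a real gap. The phrase ``a slight augmentation of $T_{f_{0}}$ (e.g.\ by padding to ensure uncountability of paths)'' is where the entire difficulty lies, and padding alone will not work. The tree you feed to $\PST_{\ns}$ must simultaneously (i) satisfy the internal antecedent $(\forall f_{(\cdot)})(\exists f\in T)(\forall n)(f_{n}\ne_{1}f)$, and (ii) have the property that \emph{any} standard non-empty perfect subtree yields a \emph{standard} $g$ with $(\forall x)(f_{0}(\overline{g}x)\ne 0)$. If you pad $T_{f_{0}}$ with, say, a full binary tree on a disjoint cone to force (i), then $\PST_{\ns}$ can simply return that cone as the perfect subtree, and you learn nothing about $f_{0}$. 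The construction that actually works (as in Sakamoto--Yamazaki) interleaves a perfect tree with copies of the $\Sigma^{1}_{1}$-witness tree so that \emph{every} path through any perfect subtree computes a witness $g$; this is not a cosmetic augmentation but the heart of the coding, and it is what makes the extracted term $u$ do something nontrivial.

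A secondary point: your contrapositive argument assumes the witness $f_{0}$ to the failure of $\Paai$ is standard, so that $T_{f_{0}}$ is a standard tree eligible for $\PST_{\ns}$. As written in the paper, $\Paai$ has an unrelativised leading $(\forall f^{1})$, so its negation a priori only gives a possibly nonstandard $f_{0}$. In the cited source this is handled (and the intended reading aligns with $(\forall^{\st}f)$, parallel to $\paai$), but in your write-up you should either justify why $f_{0}$ may be taken standard or, better, give the direct positive construction $T\mapsto$ tree $\mapsto$ perfect subtree $\mapsto$ witness $g$, which avoids the issue and produces a cleaner normal form for term extraction.
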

\begin{proof}
See \cite{sambon}*{\S4.5}.  
\end{proof}
In conclusion, \eqref{frood3} is the `effective' version of \cite{yamayamaharehare}*{Theorem~4.4}; the former is obtained from the corresponding `nonstandard' equivalence $\Paai\asa \PST_{\ns}$.  
Note that the latter proof proceeds by contradiction.  

\medskip
%
Another \emph{more mathematical} statement which can be treated along the same lines is \emph{every countable Abelian group is a direct sum of a divisible and a reduced group}.  The latter is called $\DIV$ and equivalent to $\FIVE$ by \cite{simpson2}*{VI.4.1}.  By the proof of the latter, the reverse implication is straightforward;  We shall therefore study $\DIV\di \FIVE$.  

\medskip

To this end, let $\DIV(G, D, E)$ be the statement that the countable Abelian group $G$ satisfies $G=D\oplus E $, where $D$ is a divisible group and $E$ a reduced group.  
The nonstandard version of $\DIV$ is as follows:
\be\tag{$\DIV_{\ns}$}
(\forall^{\st}G)(\exists^{\st} D, d, E)\big[\DIV(G, D, E)\wedge (D\ne\{0_{G}\}\di d\in D)\big],
\ee
where we used the same technicality as for $\PST_{\ns}$.  The effective version is:
\be\tag{$\DIV_{\ef}(t)$}
(\forall G)\big[\DIV(G, t(G)(1), t(G)(2))\wedge (t(G)(1)\ne\{0_{G}\}\di t(G)(3)\in t(G)(1))\big].
\ee
We have the following theorem.  
\begin{thm}\label{sef23345}
We have have $\P\vdash\DIV_{\ns}\di \Paai$.  From the latter, a term $u$ can be extracted such that $\textup{\textsf{E-PA}}^{\omega*}$ proves:
\be\label{frood3555}
(\forall t^{1\di 1})\big[ \DIV_{\ef}(t)\di  \MUO(u(t))  \big].
\ee
\end{thm}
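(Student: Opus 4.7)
The plan is to mirror the strategy already used for $\PST$ in Theorem \ref{sef2334}, specialised to the \emph{one direction} recorded here: first establish the nonstandard implication $\DIV_{\ns}\di \Paai$ inside $\P$, then bring this implication into a normal form of the shape $(\forall^{\st}\underline{x})(\exists^{\st}\underline{y})\psi$ with $\psi$ internal, so that Corollary \ref{consresultcor} produces the desired term $u$ witnessing $\DIV_{\ef}(t)\di \MUO(u(t))$ uniformly in $t$.

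For $\P\vdash \DIV_{\ns}\di \Paai$, fix standard $f^{1}$ and suppose $(\exists g^{1})(\forall x^{0})(f(\overline{g}x)\ne 0)$; we must exhibit a \emph{standard} path of $f$. Reproducing Simpson's construction from \cite{simpson2}*{VI.4.1}, associate to $f$ the tree $T_{f}:=\{s : (\forall i\leq |s|)(f(\overline{s}i)\ne 0)\}$ and the countable Abelian group $G_{f}$ generated by symbols $a_{s}$ for $s\in T_{f}$ modulo the relations that encode $p\cdot a_{s}=a_{s*\langle n\rangle}$ for each branching choice. Because $f$ is standard and the construction is primitive recursive, $G_{f}$ is standard by the basic axioms of Definition \ref{debs}, and the divisible part of $G_{f}$ is nonzero precisely when $T_{f}$ has an infinite path. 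Applying $\DIV_{\ns}$ to $G_{f}$ yields a standard decomposition $G_{f}=D\oplus E$ together with a standard $d\in D$ in case $D\ne \{0_{G_{f}}\}$; since by hypothesis $T_{f}$ has a (possibly nonstandard) path, $D\ne \{0_{G_{f}}\}$, and the primitive-recursive recovery procedure reads off from $d$ a standard $g^{1}$ with $(\forall x^{0})(f(\overline{g}x)\ne 0)$. Since $g$ is standard, $(\forall^{\st}x^{0})(f(\overline{g}x)\ne 0)$ as well, giving the consequent of $\Paai$.

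For the term-extraction step, one treats the implication $\DIV_{\ns}\di \Paai$ in the spirit of Theorem \ref{nogwelconsenal}: unfold $\DIV_{\ns}$ so that its standard Skolem functional $t$ (witnessing $\DIV_{\ef}(t)$) is brought outside as a universal standard quantifier, similarly unfold $\Paai$, and use $\HAC_{\INT}$ together with \textsf{I} to collect all standard existentials into a single Herbrandised block. The result is a normal form $(\forall^{\st}t, f)(\exists^{\st}g, m)\psi$ with $\psi$ internal, to which Corollary~\ref{consresultcor} applies, producing a closed term which, after taking a maximum over the finite list it outputs, yields the required $u$ with $\textsf{E-PA}^{\omega*}\vdash (\forall t)[\DIV_{\ef}(t)\di \MUO(u(t))]$. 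The main obstacle is the first step: verifying that Simpson's group construction and the path-extraction from a divisible element can both be carried out by closed terms of $\textsf{E-PA}^{\omega*}$, so that standardness is preserved at every stage and the normal-form manipulation is unobstructed; the RM literature cited makes this essentially routine but tedious.
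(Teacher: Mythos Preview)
Your proposal is correct and follows the same overall strategy as the paper, which here simply cites \cite{samzoo}*{\S4.5} for the details; your sketch supplies exactly the kind of argument that citation points to, namely Simpson's group-theoretic encoding from \cite{simpson2}*{VI.4.1} to pass from $\DIV_{\ns}$ to $\Paai$, followed by the normal-form manipulation and term extraction via Corollary~\ref{consresultcor} in the pattern of Theorems~\ref{sef} and~\ref{sef2334}.

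One small point worth tightening: you restrict to standard $f$, whereas $\Paai$ as displayed in the paper has an unrelativised $(\forall f^{1})$.  This is harmless for the second sentence of the theorem, since the normal form you actually need for extracting $u$ with $\MUO(u(t))$ is $(\forall^{\st}t,f)(\exists^{\st}g)\big[A(f)\to(\forall x)(f(\overline{g}x)\ne 0)\big]$, and your argument delivers precisely this (indeed the stronger conclusion $(\forall x)$ rather than $(\forall^{\st}x)$).  Also, the inference ``$T_{f}$ has a path $\Rightarrow D\ne\{0_{G_{f}}\}$'' deserves one more line: the path yields a nonzero divisible subgroup $H\subseteq G_{f}$ (internally), and since $E$ is reduced as an \emph{internal} property of the standard $E$, the projection of $H$ onto $E$ vanishes, forcing $H\subseteq D$.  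With these two clarifications your sketch is complete.
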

\begin{proof}
See \cite{samzoo}*{\S4.5}.  
\end{proof}

\begin{bibdiv}
\begin{biblist}
\bib{art}{article}{
  author={Artigue, Mich\`ele},
  title={Analysis},
  year={1994},
  journal={Advanced Mathematical Thinking (ed. David O. Tall), Springer},
}

\bib{avi2}{article}{
   author={Avigad, Jeremy},
   author={Feferman, Solomon},
   title={G\"odel's functional \(``Dialectica''\) interpretation},
   conference={
      title={Handbook of proof theory},
   },
   book={
      series={Stud. Logic Found. Math.},
      volume={137},
   },
   date={1998},
   pages={337--405},
}

\bib{barwise}{book}{
  title={Handbook of mathematical logic},
  author={Barwise (Ed.), John},
  series={Studies in Logic and the Foundations of Mathematics}
  volume={90}
  note={With the cooperation of H. J. Keisler, K. Kunen, Y. N. Moschovakis and A. S. Troelstra},
  publisher={North-Holland},
  date={1977},
  pages={xi+1165},
}

\bib{beeson1}{book}{
  author={Beeson, Michael J.},
  title={Foundations of constructive mathematics},
  series={Ergebnisse der Mathematik und ihrer Grenzgebiete},
  volume={6},
  note={Metamathematical studies},
  publisher={Springer},
  date={1985},
  pages={xxiii+466},
}

\bib{evenbellen}{article}{
  author={Bell, John L.},
  title={Continuity and Infinitesimals},
  editor={Zalta, Edward N.},
  conference={ title={Continuity and Infinitesimals}, },
  book={ series={The Stanford Encyclopedia of Philosophy (ed.\ Edward N. Zalta)}, volume={}, publisher={}, },
  date={2010},
  note={\url {http://plato.stanford.edu/archives/fall2010/entries/continuity/}},
  pages={},
}

\bib{brie}{article}{
  author={van den Berg, Benno},
  author={Briseid, Eyvind},
  author={Safarik, Pavol},
  title={A functional interpretation for nonstandard arithmetic},
  journal={Ann. Pure Appl. Logic},
  volume={163},
  date={2012},
  number={12},
  pages={1962--1994},
}

\bib{uncleberny}{article}{
  author={Bernstein, Allen R.},
  author={Wattenberg, Frank},
  title={Nonstandard measure theory},
  conference={ title={Applications of Model Theory to Algebra, Analysis, and Probability }, address={Internat. Sympos., Pasadena, Calif.}, date={1967}, },
  book={ publisher={Holt, Rinehart and Winston, New York}, },
  date={1969},
  pages={171--185},
}

\bib{bishf}{collection}{
  title={Errett Bishop: reflections on him and his research},
  series={Contemporary Mathematics},
  volume={39},
  booktitle={Proceedings of the memorial meeting for Errett Bishop held at the University of California, San Diego, Calif., September 24, 1983},
  editor={Rosenblatt, Murray},
  publisher={American Mathematical Society, Providence, RI},
  date={1985},
  pages={xvii+91},
}

\bib{bish1}{book}{
  author={Bishop, Errett},
  title={Foundations of constructive analysis},
  publisher={McGraw-Hill Book Co.},
  place={New York},
  date={1967},
  pages={xiii+370},
}

\bib{bishl}{book}{
  author={Bishop, Errett},
  title={Aspects of constructivism},
  publisher={Notes on the lectures delivered at the Tenth Holiday Mathematics Symposium},
  place={New Mexico State University, Las Cruces, December 27-31},
  date={1972},
  pages={pp.\ 37},
}

\bib{bitch}{article}{
  author={Bishop, Errett},
  title={Book Review: Elementary calculus},
  journal={Bull. Amer. Math. Soc.},
  volume={83},
  date={1977},
  number={2},
  pages={205--208},
}

\bib{schizo}{article}{
  author={Bishop, Errett A.},
  title={Schizophrenia in contemporary mathematics},
  conference={ title={Errett Bishop: reflections on him and his research}, },
  book={ series={Contemp. Math.}, volume={39}, publisher={Amer. Math. Soc.}, },
  date={1985},
  pages={1--32},
}

\bib{kluut}{article}{
  author={Bishop, Errett},
  title={The crisis in contemporary mathematics},
  booktitle={Proceedings of the American Academy Workshop on the Evolution of Modern Mathematics},
  journal={Historia Math.},
  volume={2},
  date={1975},
  number={4},
  pages={507--517},
}

\bib{nukino}{article}{
  author={Bishop, Errett},
  title={Mathematics as a numerical language},
  conference={ title={Intuitionism and Proof Theory}, address={Proc. Conf., Buffalo, N.Y.}, date={1968}, },
  book={ publisher={North-Holland}, },
  date={1970},
  pages={53--71},
}

\bib{bridge1}{book}{
  author={Bishop, Errett},
  author={Bridges, Douglas S.},
  title={Constructive analysis},
  series={Grundlehren der Mathematischen Wissenschaften},
  volume={279},
  publisher={Springer-Verlag},
  place={Berlin},
  date={1985},
  pages={xii+477},
}

\bib{baaiman}{proceedings}{
  author={Bourgine, Phillip},
  author={Raiman, Olvier},
  title={Economics As Reasoning On A Qualitative Model},
  note={IFAC Economies and Artificial Intelligence, Aix-en-Provence, France, 1986},
  date={1986},
  pages={121--125},
}

\bib{pabi}{collection}{
  author={Bridges, Douglas},
  author={Palmgren, Erik},
  title={Constructive Mathematics},
  editor={Edward N. Zalta},
  howpublished={\url {http://plato.stanford.edu/archives/win2013/entries/mathematics-constructive/}},
  year={The Stanford Encyclopedia of Philosophy},
  edition={Winter 2013},
}

\bib{brikkenschijten}{article}{
  author={Bridges, Douglas},
  title={Constructive truth in practice},
  conference={ title={Truth in mathematics}, address={Mussomeli}, date={1995}, },
  book={ publisher={Oxford Univ. Press}, },
  date={1998},
  pages={53--69},
}

\bib{brich}{book}{
  author={Bridges, Douglas},
  author={Richman, Fred},
  title={Varieties of constructive mathematics},
  series={London Mathematical Society Lecture Note Series},
  volume={97},
  publisher={Cambridge University Press},
  place={Cambridge},
  date={1987},
  pages={x+149},
}

\bib{bridgetoofar}{article}{
  author={Bridges, Douglas S.},
  title={Constructive mathematics: a foundation for computable analysis},
  journal={Theoret. Comput. Sci.},
  volume={219},
  date={1999},
  number={1-2},
  pages={95--109},
  issn={0304-3975},
}

\bib{bridges1}{book}{
  author={Bridges, Douglas S.},
  author={V{\^{\i }}{\c {t}}{\u {a}}, Lumini{\c {t}}a Simona},
  title={Techniques of constructive analysis},
  series={Universitext},
  publisher={Springer},
  place={New York},
  date={2006},
  pages={xvi+213},
}

\bib{brouw}{book}{
  author={Brouwer, L. E. J.},
  title={Collected works. Vol. 1},
  note={Philosophy and foundations of mathematics; Edited by A. Heyting},
  publisher={North-Holland Publishing Co.},
  place={Amsterdam},
  date={1975},
  pages={xv+628},
}

\bib{buss}{article}{
  author={Buss, Samuel R.},
  title={An introduction to proof theory},
  conference={ title={Handbook of proof theory}, },
  book={ series={Stud. Logic Found. Math.}, volume={137}, publisher={North-Holland}, place={Amsterdam}, },
  date={1998},
  pages={1--78},
}

\bib{caco}{book}{
  author={Changeux, Jean-Pierre},
  author={Connes, Alain},
  title={Conversations on mind, matter, and mathematics},
  note={Edited and translated from the 1989 French original by M. B. DeBevoise},
  publisher={Princeton University Press, Princeton, NJ},
  date={1995},
  pages={xii+261},
}

\bib{conman3}{article}{
  author={Connes, Alain},
  title={Non-standard stuff},
  year={2007},
  journal={Alain Connes' blog},
  note={\url {http://noncommutativegeometry.blogspot.com/2007/07/non-standard-stuff.html}},
}

\bib{conman}{article}{
  author={Connes, Alain},
  title={An interview with Alain Connes, Part I},
  year={2007},
  journal={EMS Newsletter},
  note={\url {http://www.mathematics-in-europe.eu/maths-as-a-profession/interviews}},
  volume={63},
  pages={25-30},
}

\bib{conman4}{article}{
  author={Connes, Alain},
  title={Brisure de sym\'etrie spontan\'ee et g\'eom\'etrie du point de vue spectral},
  language={French, with English summary},
  journal={J. Geom. Phys.},
  volume={23},
  date={1997},
  number={3-4},
  pages={206--234},
}

\bib{conman2}{article}{
  author={Connes, Alain},
  title={Noncommutative geometry and reality},
  journal={J. Math. Phys.},
  volume={36},
  date={1995},
  number={11},
  pages={6194--6231},
}

\bib{raimanneke}{proceedings}{
  author={Philippe Dague and Olivier Raiman and Philippe Deves},
  title={Troubleshooting: When Modeling Is the Trouble},
  note={Proceedings of the 6th National Conference on Artificial Intelligence. Seattle, WA, July 1987.},
  pages={600--605},
  year={1987},
}

\bib{daupje}{article}{
  author={Dauben, Joseph W.},
  title={Arguments, logic and proof: mathematics, logic and the infinite},
  conference={ title={History of mathematics and education: ideas and experiences (Essen, 1992)}, },
  book={ series={Stud. Wiss. Soz. Bildungsgesch. Math.}, volume={11}, },
  date={1996},
  pages={113--148},
  publisher={Vandenhoeck & Ruprecht, G\"ottingen},
}

\bib{tauben}{article}{
  author={Dauben, Joseph W.},
  title={Abraham Robinson and nonstandard analysis: history, philosophy, and foundations of mathematics},
  conference={ title={History and philosophy of modern mathematics (Minneapolis, MN, 1985)}, },
  book={ series={Minnesota Stud. Philos. Sci., XI}, },
  date={1988},
  pages={177--200},
}

\bib{davis13370}{article}{
  author={Ernest Davis},
  title={Order of magnitude reasoning in qualitative differential equations},
  year={1990},
  note={Readings in Qualitative Reasoning about Physical Systems},
  editor={D. S. Weld and J. de Kleer},
  publisher={Kaufmann},
  address={San Mateo, CA},
  pages={11-39},
}

\bib{dahaus}{article}{
  author={Davis, M.},
  author={Hausner, M.},
  title={Book review. The Joy of Infinitesimals. J.\ Keisler's Elementary Calculus},
  journal={Mathematical Intelligencer},
  date={1978},
  number={1},
  pages={168--170},
}

\bib{dias}{article}{
  author={Diaconescu, Radu},
  title={Axiom of choice and complementation},
  journal={Proc. AMS},
  volume={51},
  date={1975},
  pages={176--178},
}

\bib{diedif}{article}{
  author={Diener, Hannes},
  author={Loeb, Iris},
  title={Constructive reverse investigations into differential equations},
  journal={J. Log. Anal.},
  volume={3},
  date={2011},
  pages={Paper 8, pp.\ 26},
}

\bib{dummy}{book}{
  author={Dummett, Michael},
  title={Elements of intuitionism},
  series={Oxford Logic Guides},
  volume={39},
  edition={2},
  publisher={Oxford University Press},
  date={2000},
  pages={xii+331},
}

\bib{damirzoo}{misc}{
  author={Dzhafarov, Damir D.},
  title={Reverse Mathematics Zoo},
  note={\url {http://rmzoo.uconn.edu/}},
}

\bib{fega}{article}{
  author={Ferreira, Fernando},
  author={Gaspar, Jaime},
  title={Nonstandardness and the bounded functional interpretation},
  journal={Ann. Pure Appl. Logic},
  volume={166},
  date={2015},
  number={6},
  pages={701--712},
}

\bib{fried}{article}{
  author={Friedman, Harvey},
  title={Some systems of second order arithmetic and their use},
  conference={ title={Proceedings of the International Congress of Mathematicians (Vancouver, B.\ C., 1974), Vol.\ 1}, },
  book={ },
  date={1975},
  pages={235--242},
}

\bib{fried2}{article}{
  author={Friedman, Harvey},
  title={ Systems of second order arithmetic with restricted induction, I \& II (Abstracts) },
  journal={Journal of Symbolic Logic},
  volume={41},
  date={1976},
  pages={557--559},
}

\bib{godel3}{article}{
  author={G{\"o}del, Kurt},
  title={\"Uber eine bisher noch nicht ben\"utzte Erweiterung des finiten Standpunktes},
  language={German, with English summary},
  journal={Dialectica},
  volume={12},
  date={1958},
  pages={280--287},
}

\bib{goldie}{book}{
  author={Goldblatt, Robert},
  title={Lectures on the hyperreals},
  series={Graduate Texts in Mathematics},
  volume={188},
  note={An introduction to nonstandard analysis},
  publisher={Springer-Verlag},
  date={1998},
  pages={xiv+289},
}

\bib{heyting}{article}{
  author={Heyting, Arend},
  title={Address to Professor A. Robinson. At the occasion of the Brouwer memorial lecture given by Prof. A.Robinson on the 26th April 1973},
  year={1973},
  journal={Nieuw Arch. Wisk.},
  volume={21},
  number={3},
  pages={134--137},
}

\bib{besselich}{book}{
  title={Intuition and the Intellect: On the Relation Between Mathematics, Philosophy and Mysticism in the Work of L. E. J. Brouwer and Including a Comparison with Nicholas of Cusa},
  author={Heijerman, T.A.F.},
  year={1981},
  publisher={Department of Mathematics, Univ. Utrecht},
}

\bib{hesselich}{book}{
  author={Hesseling, Dennis E.},
  title={Gnomes in the fog},
  series={Science Networks. Historical Studies},
  volume={28},
  note={The reception of Brouwer's intuitionism in the 1920s},
  publisher={Birkh\"auser Verlag, Basel},
  date={2003},
  pages={xxiv+448},
}

\bib{jech}{book}{
  author={Hrbacek, Karel},
  author={Jech, Thomas},
  title={Introduction to set theory},
  series={Monographs and Textbooks in Pure and Applied Mathematics},
  volume={220},
  edition={3},
  publisher={Marcel Dekker, Inc., New York},
  date={1999},
  pages={xii+291},
}

\bib{loeb1}{book}{
  author={Hurd, Albert E.},
  author={Loeb, Peter A.},
  title={An introduction to nonstandard real analysis},
  series={Pure and Applied Mathematics},
  volume={118},
  publisher={Academic Press Inc.},
  place={Orlando, FL},
  date={1985},
  pages={xii+232},
}

\bib{ishi1}{article}{
  author={Ishihara, Hajime},
  title={Reverse mathematics in Bishop's constructive mathematics},
  year={2006},
  journal={Philosophia Scientiae (Cahier Sp\'ecial)},
  volume={6},
  pages={43-59},
}

\bib{kano2}{article}{
  author={Kanovei, Vladimir},
  author={Katz, Mikhail G.},
  author={Mormann, Thomas},
  title={Tools, objects, and chimeras: Connes on the role of hyperreals in mathematics},
  journal={Found. Sci.},
  volume={18},
  date={2013},
  number={2},
  pages={259--296},
}

\bib{gaanwekatten}{article}{
  author={Katz, Mikhail G.},
  author={Leichtnam, Eric},
  title={Commuting and noncommuting infinitesimals},
  journal={Amer. Math. Monthly},
  volume={120},
  date={2013},
  number={7},
  pages={631--641},
}

\bib{kaka}{article}{
  author={Katz, Karin U.},
  author={Katz, Mikhail G.},
  title={Meaning in Classical Mathematics: Is it at Odds with Intuitionism},
  journal={Intellectica},
  volume={56},
  date={2011},
  number={2},
  pages={223--302},
}

\bib{kaye}{book}{
  author={Kaye, Richard},
  title={Models of Peano arithmetic},
  series={Oxford Logic Guides},
  volume={15},
  publisher={The Clarendon Press},
  date={1991},
  pages={x+292},
}

\bib{keisler3}{book}{
  author={Keisler, H. Jerome},
  title={Elementary Calculus},
  publisher={Prindle, Weber and Schmidt},
  date={1976},
  pages={xviii + 880 + 61 (appendix)},
  place={Boston},
}

\bib{kohlenbach3}{book}{
  author={Kohlenbach, Ulrich},
  title={Applied proof theory: proof interpretations and their use in mathematics},
  series={Springer Monographs in Mathematics},
  publisher={Springer-Verlag},
  place={Berlin},
  date={2008},
  pages={xx+532},
}

\bib{kohlenbach2}{article}{
  author={Kohlenbach, Ulrich},
  title={Higher order reverse mathematics},
  conference={ title={Reverse mathematics 2001}, },
  book={ series={Lect. Notes Log.}, volume={21}, publisher={ASL}, },
  date={2005},
  pages={281--295},
}

\bib{loefafsteken}{article}{
  author={Martin-L{\"o}f, Per},
  title={An intuitionistic theory of types: predicative part},
  conference={ title={Logic Colloquium '73}, address={Bristol}, date={1973}, },
  book={ publisher={North-Holland}, place={Amsterdam}, },
  date={1975},
  pages={73--118. Studies in Logic and the Foundations of Mathematics, Vol. 80},
}

\bib{palmdijk}{article}{
  author={Moerdijk, Ieke},
  author={Palmgren, Erik},
  title={Minimal models of Heyting arithmetic},
  journal={J. Symbolic Logic},
  volume={62},
  date={1997},
  number={4},
  pages={1448--1460},
}

\bib{mem}{article}{
  author={Nerode, Anil},
  author={Metakides, George},
  author={Constable, Robert},
  title={Remembrances of Errett Bishop},
  conference={ title={Errett Bishop: reflections on him and his research (San Diego, Calif., 1983)}, },
  book={ series={Contemp. Math.}, volume={39}, publisher={Amer. Math. Soc., Providence, RI}, },
  date={1985},
  pages={79--84},
}

\bib{palm4}{misc}{
  author={Palmgren, Erik},
  title={Overview of Constructive Nonstandard Mathematics},
  note={Website: \verb ~http://www2.math.uu.se/$\sim $palmgren/biblio/nonstd.html~},
  date={1996},
}

\bib{palm1}{article}{
  author={Palmgren, Erik},
  title={Constructive nonstandard analysis},
  conference={ title={M\'ethodes et analyse non standard}, },
  book={ series={Cahiers Centre Logique}, volume={9}, publisher={Acad.-Bruylant, Louvain-la-Neuve}, },
  date={1996},
  pages={69--97},
}

\bib{palm2}{article}{
  author={Palmgren, Erik},
  title={A sheaf-theoretic foundation for nonstandard analysis},
  journal={Ann. Pure Appl. Logic},
  volume={85},
  date={1997},
  number={1},
  pages={69--86},
}

\bib{palm3}{article}{
  author={Palmgren, Erik},
  title={Constructive nonstandard representations of generalized functions},
  journal={Indag. Math. (N.S.)},
  volume={11},
  date={2000},
  number={1},
  pages={129--138},
}

\bib{montahue}{article}{
  author={Montalb{\'a}n, Antonio},
  title={Open questions in reverse mathematics},
  journal={Bull. Symbolic Logic},
  volume={17},
  date={2011},
  number={3},
  pages={431--454},
}

\bib{wownelly}{article}{
  author={Nelson, Edward},
  title={Internal set theory: a new approach to nonstandard analysis},
  journal={Bull. Amer. Math. Soc.},
  volume={83},
  date={1977},
  number={6},
  pages={1165--1198},
}

\bib{nostpalm}{article}{
  author={Palmgren, Erik},
  title={A sheaf-theoretic foundation for nonstandard analysis},
  journal={Ann. Pure Appl. Logic},
  volume={85},
  date={1997},
  number={1},
  pages={69--86},
}

\bib{opalm}{article}{
  author={Palmgren, Erik},
  title={Developments in constructive nonstandard analysis},
  journal={B.\ Sym.\ Logic},
  date={1998},
  pages={233--272},
}

\bib{raiman1}{book}{
  author={Olivier Raiman},
  title={Order of Magnitude Reasoning},
  note={Proceedings of the 5th National Conference on Artificial Intelligence. Philadelphia, PA, August 11-15, 1986. Volume 1: Science.},
  pages={100--104},
  year={1986},
}

\bib{raiman2}{article}{
  author={Olivier Raiman},
  title={Order of Magnitude Reasoning},
  journal={Artif. Intell.},
  volume={51},
  number={1-3},
  pages={11--38},
  year={1991},
}

\bib{poorguy}{article}{
  author={Richman, Fred},
  title={Intuitionism as a generalization},
  journal={Philosophia Math.},
  volume={5},
  date={1990},
  pages={124-128},
}

\bib{poorguy2}{article}{
  author={Richman, Fred},
  title={Interview with a constructive mathematician},
  journal={Modern Logic},
  volume={6},
  date={1996},
  number={3},
  pages={247--271},
}

\bib{iamrobert}{book}{
  author={Robert, Alain M.},
  title={Nonstandard analysis},
  publisher={Dover Publications},
  date={2003},
  pages={xx+156},
}

\bib{robinson1}{book}{
  author={Robinson, Abraham},
  title={Non-standard analysis},
  publisher={North-Holland},
  place={Amsterdam},
  date={1966},
  pages={xi+293},
}

\bib{robninson2}{article}{
  author={Robinson, Abraham},
  title={Reviews: Foundations of Constructive Analysis},
  journal={Amer. Math. Monthly},
  volume={75},
  date={1968},
  number={8},
  pages={920--921},
}

\bib{robinson64}{article}{
  author={Robinson, Abraham},
  title={Formalism $64$},
  conference={ title={Logic, Methodology and Philos. Sci. (Proc. 1964 Internat. Congr.)}, },
  book={ publisher={North-Holland}, },
  date={1965},
  pages={228--246},
}

\bib{roku}{article}{
  author={Robinson, Abraham},
  title={Concerning progress in the philosophy of mathematics},
  conference={ title={Logic Colloquium '73}, address={Bristol}, date={1973}, },
  book={ publisher={North-Holland}, },
  date={1975},
  pages={41--52. Studies in Logic and the Foundations of Mathematics, Vol. 80},
}

\bib{yamayamaharehare}{article}{
   author={Sakamoto, Nobuyuki},
   author={Yamazaki, Takeshi},
   title={Uniform versions of some axioms of second order arithmetic},
   journal={MLQ Math. Log. Q.},
   volume={50},
   date={2004},
   number={6},
   pages={587--593},
}

\bib{samzoo}{article}{
  author={Sanders, Sam},
  title={The taming of the Reverse Mathematics zoo},
  year={2015},
  journal={Submitted, \url {http://arxiv.org/abs/1412.2022}},
}

\bib{sambon}{article}{
  author={Sanders, Sam},
  title={The unreasonable effectiveness of Nonstandard Analysis},
  year={2016},
  journal={Submitted to APAL special issue of LFCS, \url {http://arxiv.org/abs/1508.07434}},
}

\bib{sambon2}{article}{
  author={Sanders, Sam},
  title={The computational content of the Loeb measure},
  year={2016},
  note={\url{http://arxiv.org/abs/1609.01945}},
}

\bib{sambonIII}{article}{
  author={Sanders, Sam},
  title={From Nonstandard Analysis to various flavours of Computability Theory},
  year={2017},
  journal={To appear in Proceedings of TAMC17, Lecture Notes in Computer Science, Springer},
}

\bib{samzooII}{article}{
  author={Sanders, Sam},
  title={The refining of the taming of the Reverse Mathematics zoo},
  year={2016},
  journal={To appear in Notre Dame Journal for Formal Logic, \url {http://arxiv.org/abs/1602.02270}},
}

\bib{simpson1}{collection}{
  title={Reverse mathematics 2001},
  series={Lecture Notes in Logic},
  volume={21},
  editor={Simpson, Stephen G.},
  publisher={ASL},
  place={La Jolla, CA},
  date={2005},
  pages={x+401},
}

\bib{kohlenbach4}{article}{
   author={Kohlenbach, Ulrich},
   title={Foundational and mathematical uses of higher types},
   conference={
      title={Reflections on the foundations of mathematics (Stanford, CA,
      1998)},
   },
   book={
      series={Lect. Notes Log.},
      volume={15},
      publisher={ASL},
   },
   date={2002},
   pages={92--116},
}

\bib{simpson2}{book}{
  author={Simpson, Stephen G.},
  title={Subsystems of second order arithmetic},
  series={Perspectives in Logic},
  edition={2},
  publisher={CUP},
  date={2009},
  pages={xvi+444},
}

\bib{sterno}{article}{
  author={Stern, Jacques},
  title={Le probl\`eme de la mesure},
  language={French},
  note={Seminar Bourbaki, Vol. 1983/84},
  journal={Ast\'erisque},
  number={121-122},
  date={1985},
  pages={325--346},
}

\bib{seut}{proceedings}{
  author={Kohei Suenaga and Hiroyoshi Sekine and Ichiro Hasuo},
  title={Hyperstream processing systems: nonstandard modeling of continuous-time signals},
  note={The 40th Annual {ACM} {SIGPLAN-SIGACT} Symposium on Principles of Programming Languages, {POPL} '13, Rome, Italy - January 23 - 25, 2013},
  pages={417--430},
  year={2013},
}

\bib{tait1}{article}{
  author={Tait, William W.},
  title={Finitism},
  year={1981},
  journal={The Journal of Philosophy},
  volume={78},
  pages={524-564},
}

\bib{tallmann}{article}{
  author={Tall, David},
  title={Natural and Formal Infinities},
  year={2001},
  journal={Educational Studies in Mathematics},
  volume={48},
  pages={2--3},
}

\bib{troeleke1}{book}{
  author={Troelstra, Anne Sjerp},
  author={van Dalen, Dirk},
  title={Constructivism in mathematics. Vol. I},
  series={Studies in Logic and the Foundations of Mathematics},
  volume={121},
  publisher={North-Holland},
  date={1988},
  pages={xx+342+XIV},
}

\bib{toufke}{article}{
  author={Tzouvaras, Athanassios},
  title={Modeling vagueness by nonstandardness},
  journal={Fuzzy Sets and Systems},
  volume={94},
  date={1998},
  number={3},
  pages={385--396},
}

\bib{vopsub}{article}{
  author={Petr Vopenka },
  title={The Philosophical Foundations Of Alternative Set Theory},
  journal={International Journal of General Systems},
  volume={20},
  number={1},
  pages={115-126},
  year={1991},
}

\bib{welden}{article}{
  author={Daniel S. Weld},
  title={Exaggeration},
  journal={Artif. Intell.},
  volume={43},
  number={3},
  pages={311--368},
  year={1990},
}

\bib{nsawork2}{collection}{
  title={Nonstandard analysis for the working mathematician},
  series={Mathematics and its Applications},
  volume={510},
  editor={Wolff, Manfred},
  editor={Loeb, Peter A.},
  publisher={Kluwer},
  date={2015},
  note={Second edition},
}

\bib{EXCESS}{article}{
  author={Xu, Chuangjie},
  author={Sanders, Sam},
  title={Extracting the computational content of Nonstandard Analysis},
  journal={In preparation; Agda code: \url{http://cj-xu.github.io/agda/nonstandard_dislectica/Dialectica.html}},
  date={2015},
}

\bib{agda}{book}{
  title={Agda, a dependently typed functional programming language},
  date={2016},
  note={\url {http://wiki.portal.chalmers.se/agda/pmwiki.php}},
}

\end{biblist}
\end{bibdiv}

\bye